\newcommand{\R}{{\mathbb R}}
\newcommand{\Z}{{\mathbb Z}}
\newcommand{\N}{{\mathbb N}}
\newcommand {\nin}{\noindent}
\newcommand {\menos}{\backslash}
\newcommand {\gd}{\displaystyle}
\newcommand {\con}{\subset}
\newcommand {\rt}{\rightarrow}
\begin{document}

\title[Local minimizers in spaces of symmetric functions and applications]
{Local minimizers in spaces of symmetric functions and applications}


\author{Leonelo Iturriaga}

\address{Leonelo Iturriaga \newline \indent Departamento de Matemática - Universidad Técnica Federico Santa Maria \newline \indent
Av. España 1680, Casilla 110-V, Valparaíso - Chile \newline \indent
leonelo.iturriaga@usm.cl}

\author{Ederson Moreira dos Santos}
\address{Ederson Moreira dos Santos \newline \indent Instituto de Ciências Matemáticas e de Computação - Universidade de São Paulo \newline \indent
C.P. 668, CEP 13560-970 - S\~ao Carlos - SP - Brazil \newline \indent
ederson@icmc.usp.br}

\author{Pedro Ubilla}

\address{Pedro Ubilla \newline \indent Departamento de Matemáticas e C. C. - Universidad de Santiago de Chile \newline \indent
Casilla 307, Correo 2, Santiago - Chile \newline \indent
pedro.ubilla@usach.cl}

\date{\today}

\subjclass[2000]{35B06; 35B09; 35B38; 35J15; 46E35; 49K10}

\keywords{$C^1$ versus $H^1$ local minimizers; Critical exponents; Spaces of symmetric functions; Hénon type weights.}


\begin{abstract}
We study $H^1$ versus $C^1$ local minimizers for functionals defined on spaces of symmetric functions, namely functions that are invariant by the action of some subgroups of $\mathcal{O}(N)$. These functionals, in many cases, are associated with some elliptic partial differential equations that may have supercritical growth. So we also prove some results on classical regularity for symmetric weak solutions for a general class of semilinear elliptic equations with possibly supercritical growth. We then apply these results to prove the existence of a large number of classical positive symmetric solutions to some concave-convex elliptic equations of Hénon type.
\end{abstract}

\maketitle
\numberwithin{equation}{section}
\newtheorem{theorem}{Theorem}[section]
\newtheorem{lemma}[theorem]{Lemma}
\newtheorem{example}[theorem]{Example}
\newtheorem{remark}[theorem]{Remark}
\newtheorem{proposition}[theorem]{Proposition}
\newtheorem{definition}[theorem]{Definition}
\newtheorem{corollary}[theorem]{Corollary}
\newtheorem*{open}{Open problem}

\section{Introduction}

We study $H^1$ versus $C^1$ local minimizers for functionals defined in spaces of symmetric functions, namely functions that are invariant by the action of some subgroups of $\mathcal{O}(N)$. The functionals considered in this paper may not be defined in the whole space $H^1_0(B)$, but on some proper subspaces of symmetric functions. Throughout in this paper $B$ stands for the open unit ball centered at zero in $\R^N$, $N \geq 1$. In order to prove the equivalence in the $C^1$-topology and $H^1$-topology of local symmetric minimizers, it is essential to have the classical regularity for symmetric weak solutions of the Euler-Lagrange equations associated with these functionals. Problems with supercritical growth in the classical sense are involved and so classical regularity results, as in Brezis and Kato \cite{brezis-kato} based on the Moser's iteration technique \cite{moser}, cannot be directly applied. By the same reason, the principle of symmetric criticality of Palais \cite{palais} does not apply. Hence we prove some regularity results, namely Theorems \ref{teo regularidade radial} and \ref{teo regularidade l}, which cover a large class of elliptic partial differential equations and extend and simplify the proofs of some results in \cite[Sections 5.1 and 5.2]{ederson-djairo-olimpio}. 


We then apply these results to prove the existence of a large number of positive solutions to some classes of elliptic partial differential equations of concave-convex type.  We prove the existence of at least three solutions and, if $N\geq 3$, up to $\left[ \frac{N}{2}\right] + 2$ solutions, each of them exhibiting certain symmetry. In comparison with the pioneering work of Brezis and Nirenberg \cite{brezis-nirenbergHXC} and Ambrosetti et al. \cite{abc}, our approach allows us to obtain the existence of more solutions and to treat problems that are critical or supercritical in the classical sense. 

We consider elliptic equations of the type
\begin{equation}\label{general equation introduction}
 -\Delta u  = f(x, u) \quad \text{in} \quad B, \quad u  = 0 \quad \text{on} \quad \partial B,
\end{equation}
where $f$ satisfies some suitable hypotheses regarding symmetry with respect to the first variable and growth that may even be supercritical in the classical sense. We also assume that $f$ is Caratheodory, that is, for each $u \in \R$, $x \mapsto f(x, u)$ is measurable and, $u \mapsto f(x, u)$ is continuous for almost every $x \in B$.

As we will describe next, many interesting problems involving partial differential equations are invariant by the action of certain groups of symmetries and there are two major lines of research on this type of problems: the symmetry that solutions inherit from the problem, and the existence of solutions exhibiting the problem's symmetry.

On the first direction we mention the seminal work of Gidas et al. \cite{gidas-ni-nirenberg} in which, assuming quite sharp conditions on $f$, radial symmetry for any positive solution of \eqref{general equation introduction} is proved. Bearing on this subject and related to the problems treated in this paper  we also mention the results on symmetry breaking for least energy solutions of the Hénon equation \cite{henon}, i.e. in case $f(x,u) = |x|^\alpha |u|^{p-1}u$ with $\alpha >0$ and $p>1$, proved in \cite{smets-su-willem, byeon-wang, cao-yan-peng} and the results about the Schwarz foliated symmetry for least energy solutions proved in \cite{smets-willem, pacella}.

On the second direction, within which this paper contributes, the search of symmetric solutions naturally induces the study of spaces of symmetric functions. Here we mention the work of Strauss \cite{strauss} on solitary waves; the work of Ni \cite{ni} on the Hénon equation;  the work of Lions \cite{lionssymmetry} about symmetry and compactness on Sobolev spaces; the work of de Figueiredo et al. \cite{ederson-djairo-olimpio} about embeddings of Sobolev spaces of symmetric functions in weighted $L^p$-spaces.

To state the results about  $H^1$ versus $C^1$ local minimizers in spaces of symmetric functions, we introduce some notations: $F(x,u) : = \int_0^u f(x,s) ds$, $\alpha \geq 0$ and
\[
2^* = \left\{
\begin{array}{l}
2N/(N-2) \quad \text{if} \quad N \geq 3,\\
\infty \quad \text{if} \quad N=1,2,
\end{array}
\right.
\quad
2^*_{\alpha} =
\left\{
\begin{array}{l}
2(N+ \alpha)/(N-2) \quad \text{if} \quad N \geq 3, \\
\infty \quad \text{if} \quad N=1,2,
\end{array}
 \right.
\]
which are, in the case of $N\geq 3$, the critical exponents for the embeddings $H^1_0(B) \hookrightarrow L^p(B)$ and $H^1_{0, {\rm rad}}(B) \hookrightarrow L^p(B, |x|^{\alpha})$, respectively.

\begin{theorem}[\textbf{$H^1$ versus $C^1$ local minimizers: space of radially symmetric functions}]\label{teo HXC radial introduction}
Assume the symmetry and growth conditions
\begin{equation}\label{crescimento f regularidade introduction}
\left\{
\begin{array}{l}
f(x,u) = f(|x|, u), \quad \forall \, u \in \R, \ \forall \, x \in B,\\
|f(x, u)| \leq C |x|^{\alpha}(1 + |u|^q), \ \ \forall \, x\in B, \ \ \forall \, u \in \R, \ \ C>0 \ \text{is a constant},\\
\alpha \geq 0, \: q = 2^*_{\alpha}-1 \:\: \text{in case} \:\: N \geq 3 \:\: \text{and any} \:\: 1 < q \:\: \text{in case}  \:\:N=1,2,
\end{array}
\right.
\end{equation}
and set
\[
\begin{array}{l}
H^1_{0, {\rm rad}}(B) = \{ u \in H^1_0(B); u = u \circ O,  \ \forall \ O \in \mathcal{O}(N) \} \quad \text{and} \\
C^1_{0, {\rm rad}}(\overline{B}) = \{ u \in C^1(\overline{B}); u = u \circ O, \  \forall \  O \in \mathcal{O}(N) \  \text{and}  \ u = 0  \ \text{on} \ \partial B\}.
\end{array}
\]
Associated with \eqref{general equation introduction} we consider the integral functional
\begin{equation}\label{funcional geral regularidade}
\Phi_{{\rm rad}}(u) = \frac{1}{2} \int_B|\nabla u|^2 dx - \int_B F(x,u)\, dx, \quad u \in H^1_{0,{\rm rad}}(B).
\end{equation}
Let $u_0 \in H^1_{0, {\rm rad}}(B)$ be a local minimum of $\Phi_{{\rm rad}}$ for the $C^1_{{\rm rad}}$-topology, that is, there exists $r> 0$ such that
 \begin{equation}\label{minimum C1 introduction}
 \Phi_{{\rm rad}}(u_0) \leq \Phi_{{\rm rad}}(u_0 + v), \quad \forall \,  v \in C^1_{0, {\rm rad}}(\overline{B}) \quad \text{with} \quad \| v \|_{C^1} \leq r.
 \end{equation}
 Then $u_0$ is also a local minimum of $\Phi_{{\rm rad}}$ for the $H^1_{{\rm rad}}$-topology, that is, there exists $\delta> 0$ such that
 \begin{equation}\label{minimum H1 introduction}
 \Phi_{{\rm rad}}(u_0) \leq \Phi_{{\rm rad}}(u_0 + v), \quad \forall \, v \in H^1_{0, {\rm rad}}(B) \quad \text{with} \quad \| \nabla v\|_{2} \leq \delta.
 \end{equation}
\end{theorem}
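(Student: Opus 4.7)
The plan is proof by contradiction, using Theorem \ref{teo regularidade radial} to promote $H^1$-smallness into $C^1$-smallness. First I would show that $u_0$ is itself a radial weak solution of $-\Delta u = f(x, u)$: for any $\phi \in C^1_{0,{\rm rad}}(\overline B)$ and any $|t|$ small enough that $\|t\phi\|_{C^1} \leq r$, the scalar function $t \mapsto \Phi_{{\rm rad}}(u_0 + t\phi)$ is differentiable and attains a minimum at $t=0$, so its derivative $\int_B \nabla u_0 \cdot \nabla\phi\, dx - \int_B f(x, u_0)\phi\, dx$ must vanish; averaging a general test function over $\mathcal{O}(N)$ then promotes this identity to a full weak solution in $H^1_0(B)$. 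By Theorem \ref{teo regularidade radial} we thus obtain $u_0 \in C^1(\overline B) \cap L^\infty(B)$.

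Assume for contradiction that $u_0$ is not an $H^1_{{\rm rad}}$-local minimum. Then for every sufficiently small $\varepsilon > 0$,
\begin{equation*}
m_\varepsilon := \inf\bigl\{ \Phi_{{\rm rad}}(u_0 + v) : v \in H^1_{0,{\rm rad}}(B),\ \|\nabla v\|_2 \leq \varepsilon \bigr\} < \Phi_{{\rm rad}}(u_0).
\end{equation*}
I would next attain $m_\varepsilon$ at some $v_\varepsilon$ by the direct method: coercivity and weak lower semicontinuity of the Dirichlet term are automatic, while continuity of the nonlinear term along weakly convergent sequences follows from the compact radial embedding into $L^p(B, |x|^\alpha)$ for $p < 2^*_\alpha$. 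In the critical weighted case $q = 2^*_\alpha - 1$ (with $N \geq 3$), a Brezis--Lieb decomposition together with the strict inequality $m_\varepsilon < \Phi_{{\rm rad}}(u_0)$ rules out concentration of a minimizing sequence.

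Setting $u_\varepsilon := u_0 + v_\varepsilon$, a Lagrange multiplier analysis produces a PDE for $u_\varepsilon$: either the constraint is inactive and $u_\varepsilon$ satisfies $-\Delta u_\varepsilon = f(x, u_\varepsilon)$ weakly, or there is $\lambda_\varepsilon \geq 0$ with
\begin{equation*}
-(1 + \lambda_\varepsilon)\Delta u_\varepsilon = f(x, u_\varepsilon) + \lambda_\varepsilon f(x, u_0),
\end{equation*}
obtained by substituting $v_\varepsilon = u_\varepsilon - u_0$ and using $-\Delta u_0 = f(x, u_0)$. In either case $u_\varepsilon$ is a radial weak solution of a single equation $-\Delta u_\varepsilon = g_\varepsilon(x, u_\varepsilon)$ whose nonlinearity satisfies $|g_\varepsilon(x, s)| \leq C'|x|^\alpha(1 + |s|^q)$ with $C'$ independent of $\varepsilon$ (using $u_0 \in L^\infty$ and that the coefficients $\frac{1}{1+\lambda_\varepsilon}$ and $\frac{\lambda_\varepsilon}{1+\lambda_\varepsilon}$ lie in $[0,1]$). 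Theorem \ref{teo regularidade radial} then yields a $C^{1,\beta}$-bound for $u_\varepsilon$ depending only on $\|u_\varepsilon\|_{H^1} \leq \|u_0\|_{H^1} + \varepsilon$ and $C'$, so by Arzel\`a--Ascoli $(u_\varepsilon)$ is precompact in $C^1(\overline B)$. Since $v_\varepsilon \to 0$ in $L^2$, every subsequential limit equals $u_0$, so $u_\varepsilon \to u_0$ in $C^1(\overline B)$ as $\varepsilon \to 0^+$.

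For $\varepsilon$ small this gives $\|v_\varepsilon\|_{C^1} = \|u_\varepsilon - u_0\|_{C^1} \leq r$, so \eqref{minimum C1 introduction} forces $\Phi_{{\rm rad}}(u_\varepsilon) \geq \Phi_{{\rm rad}}(u_0)$, contradicting $\Phi_{{\rm rad}}(u_\varepsilon) = m_\varepsilon < \Phi_{{\rm rad}}(u_0)$. The main obstacle I anticipate is ensuring that Theorem \ref{teo regularidade radial} delivers a $C^{1,\beta}$-estimate whose norm depends only on the $H^1$-norm and on the weighted growth constants, so that it applies \emph{uniformly} along the sequence $(u_\varepsilon)$; without such quantitative regularity, Arzel\`a--Ascoli does not yield $C^1$-convergence and the whole scheme collapses. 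Attainment of $m_\varepsilon$ at the critical weighted exponent is a secondary difficulty that the closeness to the genuine minimum $u_0$ is meant to neutralize.
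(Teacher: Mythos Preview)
Your overall scheme---contradiction, constrained minimization, Lagrange multiplier, uniform regularity via Theorem~\ref{teo regularidade radial}, then $C^1$-convergence---matches the paper's template, but the execution differs in a significant way. The paper does \emph{not} minimize $\Phi_{\rm rad}$ on an $H^1$-ball. Instead it truncates $f$ to a bounded $f_{j_n}$ and minimizes the truncated functional $\Phi_{j_n}$ on the weighted $L^{q+1}$-ball $\{\,|u-u_0|_{q+1,\alpha}\le 1/n\,\}$; the resulting Euler--Lagrange equation is $-\Delta u_n = f_{j_n}(x,u_n)+\mu_n|x|^{\alpha}|u_n-u_0|^{q-1}(u_n-u_0)$ with $\mu_n\le 0$, and one must then split into the cases $(\mu_n)$ bounded versus $\mu_n\to-\infty$. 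Your $H^1$-ball constraint yields the much cleaner equation $(1+\lambda_\varepsilon)(-\Delta u_\varepsilon)=f(x,u_\varepsilon)+\lambda_\varepsilon f(x,u_0)$ with $\lambda_\varepsilon\ge 0$, so the coefficients $\tfrac{1}{1+\lambda_\varepsilon},\,\tfrac{\lambda_\varepsilon}{1+\lambda_\varepsilon}\in[0,1]$ are automatically bounded and the unbounded-multiplier case simply does not arise. That is a genuine simplification, and your anticipated obstacle (uniform quantitative regularity) is exactly what the paper handles in its Case~(i) via the Guedda--V\'eron version of Brezis--Kato, so it is not a real problem.

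The actual gap is the step you call ``secondary'': attainment of $m_\varepsilon$ when $N\ge 3$ and $q=2^*_\alpha-1$. Your proposed mechanism---Brezis--Lieb plus the strict inequality $m_\varepsilon<\Phi_{\rm rad}(u_0)$---is not right as stated. Brezis--Lieb applies to pure powers, not to a general $F$ satisfying only the growth bound in \eqref{crescimento f regularidade introduction}; and even for the pure power, what prevents loss of compactness is not the strict inequality but the \emph{smallness of $\varepsilon$}, which makes the residual $\tfrac12\|\nabla w_k\|_2^2-\tfrac{1}{q+1}\int_B|x|^\alpha|w_k|^{q+1}$ nonnegative via the weighted Sobolev inequality. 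For general $F$ you would need a Brezis--Lieb-type splitting $\int_B F(x,u_0+v+w_k)=\int_B F(x,u_0+v)+(\text{term in }w_k)+o(1)$, and since the weak limit $u_0+v$ is not a priori in $L^\infty$, this requires real work. The paper's truncation device is precisely designed to dodge this: with $f_{j_n}$ bounded, $F_{j_n}$ grows at most linearly and weak continuity of $\int F_{j_n}$ is immediate. A clean fix for your route is to graft truncation onto it: minimize $\Phi_{j}$ (not $\Phi_{\rm rad}$) on the $H^1$-ball, which restores trivial attainment while keeping your nicer multiplier equation.
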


\vspace{10pt}

\begin{theorem}[\textbf{$H^1$ versus $C^1$ local minimizers: spaces of partially \linebreak symmetric functions}]\label{HXC partial introduction}
Let $N\geq 4$, $l \in \N$ such that  $2 \leq N- l \leq l$. Set $2_ l = \frac{2(l+1)}{l-1}$,
\[
\begin{array}{l}
H_{l}(B)  =  \{ u \in H^1_0(B); u = u \circ O, \  \forall \ O \in \mathcal{O}(l) \times \mathcal{O}(N-l) \} \quad \text{and} \\
C^1_{0, l}(\overline{B}) = \{ u \in C^1(\overline{B}); u = u \circ O, \ \forall \ O \in \mathcal{O}(l) \times \mathcal{O}(N-l) \ \text{and} \ u = 0 \ \text{on} \ \partial B\}.
\end{array}
\]
Assume the symmetry and growth conditions
\begin{equation}\label{crescimento f l introduction}
\left\{
\begin{array}{l}
f(y,z,u) = f(|y|, |z|,u), \quad \forall \, x = (y,z) \in B, \ \forall \, u \in \R,\\
|f(x, u)| \leq C |x|^{\alpha}(1 + |u|^{p}), \ \ \forall \, x\in B, \ \ \forall \, u \in \R, \ \ C> 0 \: \text{is a constant}, \\
\alpha \geq \alpha_0(N, l, p) >0, \quad 1 < p < 2_l -1 = \frac{l+3}{l-1},
\end{array}
\right.
\end{equation}
where $\alpha_0(N,l,p)$ is given as in Theorem \rm{\ref{teo regularidade l}} below. Associated with \eqref{general equation introduction} we consider the integral functional
\begin{equation}\label{funcional geral l introduction}
 \Phi_{l}(u) = \frac{1}{2} \int_B|\nabla u|^2 dx - \int_B F(x,u)\, dx, \quad u \in H_{l}(B).
\end{equation}
Let $u_0 \in H_l$ be a local minimum of $\Phi_l$ for the $C^1_l$-topology, that is, there exists $r> 0$ such that
\[
 \Phi_l(u_0) \leq \Phi_{l}(u_0 + v), \quad \forall \,  v \in C^1_{0, l}(\overline{B}) \quad \text{with} \quad \| v \|_{C^1} \leq r.
\]
 Then $u_0$ is also a local minimum of $\Phi_{l}$ for the $H_l$-topology, that is, there exists $\delta> 0$ such that
\[
 \Phi_{l}(u_0) \leq \Phi_{l}(u_0 + v), \quad \forall \, v \in H_l(B) \quad \text{with} \quad \| \nabla u\|_{2} \leq \delta.
\]
\end{theorem}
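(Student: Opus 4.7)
The plan is to mimic the classical Brezis--Nirenberg $H^1$ versus $C^1$ argument in the symmetric setting, using Theorem \ref{teo regularidade l} as a substitute for Brezis--Kato regularity (which is unavailable because $f$ may be supercritical in the classical sense). A preliminary observation is that $u_0$ is a critical point of $\Phi_l$ on $H_l(B)$: the assumed $C^1_l$-minimality yields $\Phi_l'(u_0)v = 0$ for every $v \in C^1_{0,l}(\overline{B})$, and this subspace is dense in $H_l(B)$ in the $H^1$ norm. A standard averaging of test functions over $\mathcal{O}(l) \times \mathcal{O}(N-l)$ (which preserves the relevant integrals because $u_0$ is symmetric) then promotes this to $-\Delta u_0 = f(x, u_0)$ in $H^{-1}(B)$, and Theorem \ref{teo regularidade l} gives $u_0 \in C^1(\overline{B}) \cap L^\infty(B)$.

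I argue by contradiction. If $u_0$ is not an $H_l$-local minimum, fix $\epsilon_n \searrow 0$. Under \eqref{crescimento f l introduction} the embedding $H_l(B) \hookrightarrow L^{p+1}(B, |x|^\alpha)$ is compact, so $\Phi_l$ is sequentially weakly lower semicontinuous on bounded sets of $H_l(B)$ and attains its infimum on $\overline{B_n} := \{ u \in H_l(B) : \|\nabla(u-u_0)\|_2 \leq \epsilon_n\}$ at some $u_n$ with $\Phi_l(u_n) < \Phi_l(u_0)$ and $u_n \to u_0$ in $H_l(B)$. The Lagrange multiplier rule yields $\mu_n \geq 0$ such that, for every $v \in H_l(B)$,
$$
(1+\mu_n) \int_B \nabla u_n \cdot \nabla v \, dx - \mu_n \int_B \nabla u_0 \cdot \nabla v \, dx = \int_B f(x, u_n)\, v \, dx,
$$
and the same symmetrization of test functions as above extends this identity to all $v \in H^1_0(B)$. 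Combined with the equation for $u_0$, this reads
$$
-\Delta u_n = a_n f(x, u_n) + b_n f(x, u_0) \quad \text{in } H^{-1}(B),
$$
with $a_n := 1/(1+\mu_n)$, $b_n := \mu_n/(1+\mu_n)$, so that $a_n + b_n = 1$ and $a_n, b_n \in [0, 1]$.

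Since $u_0 \in L^\infty(B)$, the right-hand side $F_n(x, s) := a_n f(x, s) + b_n f(x, u_0(x))$ still satisfies the growth condition in \eqref{crescimento f l introduction} uniformly in $n$, and $u_n$ is a symmetric weak solution of $-\Delta u = F_n(x, u)$. Applying Theorem \ref{teo regularidade l} produces a uniform bound $\|u_n\|_{C^{1,\gamma}(\overline{B})} \leq M$, so by Arzel\`a--Ascoli some subsequence converges in $C^1(\overline{B})$; the $H^1$-convergence forces the limit to be $u_0$. Therefore $u_n \to u_0$ in $C^1(\overline{B})$, which contradicts the assumed $C^1_l$-local minimality of $u_0$ since $\Phi_l(u_n) < \Phi_l(u_0)$ for all $n$.

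The main obstacle I anticipate is obtaining the uniform $C^{1,\gamma}$ bound in the last step: one must verify that Theorem \ref{teo regularidade l} applies with constants depending only on the $H^1$-bound of $u_n$ (which is built in through $u_n \in \overline{B_n}$) and the $L^\infty$-norm of $u_0$, independently of the unknown multipliers $\mu_n$. The convex combination structure $a_n + b_n = 1$, together with the boundedness of $u_0$, is what keeps the effective nonlinearity $F_n$ within the class covered by Theorem \ref{teo regularidade l} in a uniform fashion, and hence allows the $C^1$ contradiction to be closed.
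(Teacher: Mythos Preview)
Your argument is correct but follows a different route from the paper's. The paper (see the Remark after the proof of Theorem~\ref{teo HXC radial introduction}) transports the proof of Theorem~\ref{teo HXC radial introduction} verbatim to the $H_l$ setting: it truncates $f$ to bounded $f_{j_n}$, minimizes the truncated functional $\Phi_{j_n}$ under the weighted Lebesgue constraint $|u-u_0|_{q+1,\alpha}\le 1/n$, obtains a Lagrange multiplier term $\mu_n|x|^\alpha|u_n-u_0|^{q-1}(u_n-u_0)$ with $\mu_n\le 0$, and then splits into the cases $(\mu_n)$ bounded and $\mu_n\to-\infty$, the second requiring a separate $L^\infty$ estimate. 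Your scheme---minimizing $\Phi_l$ itself on shrinking $H^1$-balls---produces instead the convex-combination right-hand side $a_n f(x,u_n)+b_n f(x,u_0)$ with $a_n+b_n=1$, $a_n,b_n\in[0,1]$, which stays uniformly in the growth class of Theorem~\ref{teo regularidade l} regardless of the size of $\mu_n$ and so avoids both the truncation and the case split; it is closer in spirit to the original Brezis--Nirenberg argument and arguably tidier here. Conversely, the paper's truncation makes the existence of the constrained minimizer immediate (coercivity follows from the linear growth of $F_{j_n}$), whereas you rely on compactness of the embedding $H_l(B)\hookrightarrow L^{p+1}(B,|x|^\alpha)$ for $p+1<2_l$.

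One small correction: your ``averaging of test functions'' step, used to promote the symmetric weak equation to all of $H^{-1}(B)$, is not well posed \emph{before} regularity is established, since $\int_B f(x,u_0)\,v\,dx$ need not converge for arbitrary $v\in H^1_0(B)$ when $p>2^*-1$. The step is also unnecessary: Theorem~\ref{teo regularidade l} is formulated precisely for weak solutions in the symmetric sense and already delivers the strong equation, after which the identity in $H^{-1}(B)$ is automatic.
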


\vspace{10pt}

A model problem to which the above results apply is
\begin{equation}\label{problem weight introduction}
\left\{
\begin{array}{l}
-\Delta w = \lambda | x |^{\alpha} (w + 1 )^{p} \quad \text{in} \quad B, \\
\quad w > 0 \quad \text{in} \quad B, \quad w = 0 \quad \text{on} \quad \partial B,
\end{array}
\right.
\end{equation}
with $\alpha > 0$, $p>1$ and a parameter $\lambda> 0$, which is related to the Hénon equation
\[
-\Delta w = | x |^{\alpha}w^p \quad \text{in} \quad B, \\
\quad w > 0 \quad \text{in} \quad B, \quad w = 0 \quad \text{on} \quad \partial B,
\]
and to the equation
\begin{equation}\label{equation non weight}
-\Delta w = \lambda (w + 1 )^{p} \quad \text{in} \quad B, \\
\quad w > 0 \quad \text{in} \quad B, \quad w = 0 \quad \text{on} \quad \partial B,
\end{equation}
for which we cite the works \cite{keener-keller, joseph-lundgren, crandall-rabinowitz, brezis-nirenberg, gazzola-malchiodi}. 

The equation \eqref{equation non weight} has been extensively studied due to its application to physical models, in particular as the  steady-state problem corresponding to a nonlinear reaction-diffusion equation;  cf. \cite{joseph-lundgren, joseph-sparrow}. By adding to \eqref{equation non weight} the weight $|x|^\alpha$, with $\alpha> 0$, which turns out to be the equation \eqref{problem weight introduction}, we mean that the medium $B$ has some intrinsic properties that interfere in the reaction rate. Moreover, the partial differential equation \eqref{problem weight introduction}, literally the identity \eqref{problem weight introduction}, says that such intrinsic properties of $B$ hinder diffusion close to its center, because $|x|^{\alpha}$ vanishes at zero and it is very small close to $x=0$. Therefore, the existence of solutions that concentrate on the boundary, as $\alpha \rt + \infty$, is somehow expected; cf. \cite{byeon-wang, cao-yan-peng, ederson-pacella}.

In view of the published literature on the non-weighted problem \eqref{equation non weight}, e.g. \cite{keener-keller, crandall-rabinowitz, brezis-nirenberg}, the next theorem is quite conventional as it describes the range of the parameter $\lambda$ in which \eqref{problem weight introduction} has a solution.

\begin{theorem}[\textbf{existence of a solution}]\label{teorema existencia introduction}
Suppose $N \geq 1$, $p>1$ and $\alpha > 0$. There exists $\lambda_* = \lambda_*(N, \alpha, p) > 0$ such that:
\begin{enumerate}[{\rm(i)}]
\item  There is no classical solution of \eqref{problem weight introduction} if $\lambda > \lambda_*$;

\item  There exists at least one radial classical solution of \eqref{problem weight introduction} if $0< \lambda < \lambda_*$;

\item If $N \geq 3$ also assume $p \leq 2^*_{\alpha} -1$. If $ \lambda = \lambda_*$, then \eqref{problem weight introduction} has at least one radial classical solution.
\end{enumerate}
\end{theorem}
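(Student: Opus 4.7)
The plan is to set
\[
\lambda_* \;:=\; \sup\bigl\{\lambda>0:\text{problem }\eqref{problem weight introduction}\text{ admits a classical solution}\bigr\},
\]
and then to prove in order: (a) $\lambda_*<\infty$, which is exactly (i); (b) that $(0,\lambda_*)$ is contained in the set of admissible parameters, giving (ii); and (c) that under the subcritical assumption $p\leq 2^*_\alpha-1$ the extremal value $\lambda_*$ is itself admissible, yielding (iii).

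For the finiteness in (i), let $(\mu_1^\alpha,\psi)$ denote the first eigenpair of the weighted eigenvalue problem $-\Delta\psi=\mu|x|^\alpha\psi$ in $B$ with $\psi=0$ on $\partial B$, normalized so that $\psi>0$. Multiplying \eqref{problem weight introduction} by $\psi$, integrating by parts, and using the tangent-line bound $(s+1)^p\geq 1+ps$ (valid for $p\geq 1$, $s\geq 0$), one obtains
\[
(\mu_1^\alpha-\lambda p)\int_B|x|^\alpha w\,\psi\,dx \;\geq\; \lambda\int_B|x|^\alpha\psi\,dx \;>\;0,
\]
whence $\lambda\leq\mu_1^\alpha/p$ for every admissible $\lambda$, proving (i).

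To establish (ii), first note $\lambda_*>0$: let $v$ solve $-\Delta v=1$ in $B$ with $v=0$ on $\partial B$; $v$ is radial, positive, and since $|x|^\alpha\leq 1$ on $B$, $v$ is a classical supersolution of \eqref{problem weight introduction} whenever $\lambda(\|v\|_\infty+1)^p\leq 1$, while $0$ is a subsolution. The classical monotone iteration -- each iterate solving $-\Delta w_{k+1}=\lambda|x|^\alpha(w_k+1)^p$, which preserves radial symmetry -- produces a positive radial classical solution, so $\lambda_*>0$. Then for any $\lambda\in(0,\lambda_*)$ I pick $\mu\in(\lambda,\lambda_*]$ admissible with classical solution $w_\mu$; since $-\Delta w_\mu=\mu|x|^\alpha(w_\mu+1)^p\geq\lambda|x|^\alpha(w_\mu+1)^p$, $w_\mu$ is a supersolution at level $\lambda$, and the same monotone iteration yields a classical radial solution at $\lambda$.

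The delicate part is (iii). Take $\lambda_n\uparrow\lambda_*$ and let $w_n$ denote the minimal positive radial classical solution at $\lambda_n$ produced above; minimality ensures that $\{w_n\}$ is non-decreasing and that each $w_n$ is variationally stable, i.e.
\[
\int_B|\nabla\varphi|^2\,dx\;\geq\;\lambda_n\,p\int_B|x|^\alpha(w_n+1)^{p-1}\varphi^2\,dx,\qquad\forall\,\varphi\in H^1_0(B).
\]
Testing this inequality with $\varphi=(w_n+1)^s-1$ for $s\in\bigl(p-\sqrt{p(p-1)},\,p+\sqrt{p(p-1)}\bigr)$ and combining with the weak form of the equation tested against $(w_n+1)^{2s-1}-1$ yields, via the standard Brezis--V\'azquez/Crandall--Rabinowitz manipulation, a uniform bound on $\int_B|x|^\alpha(w_n+1)^{p+2s-1}\,dx$, and in particular a uniform $H^1_{0,\mathrm{rad}}$-bound for $\{w_n\}$. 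Passing to a weak limit $w^*\in H^1_{0,\mathrm{rad}}(B)$, the monotone pointwise convergence identifies $w^*$ as a non-negative weak radial solution of \eqref{problem weight introduction} at $\lambda=\lambda_*$. Because $p\leq 2^*_\alpha-1$, the nonlinearity $f(x,u)=\lambda_*|x|^\alpha(u_++1)^p$ obeys \eqref{crescimento f regularidade introduction}, so Theorem \ref{teo regularidade radial} upgrades $w^*$ to a classical solution, and positivity $w^*\geq w_1>0$ is inherited from the monotone construction. The main obstacle is precisely this uniform energy bound at the extremum; once in hand, the existing regularity theorem closes the argument.
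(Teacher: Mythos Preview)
Your argument is correct. Parts (i) and (ii) match the paper's almost verbatim: the paper also uses the first weighted eigenpair to get $\lambda p<\lambda_{1,\alpha}$ (Lemma~4.1) and the sub/supersolution monotone iteration starting from $0$ (Lemma~4.2 and Proposition~4.3) to produce the minimal radial solution on $(0,\lambda_*)$.

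For (iii) you take a genuinely different route. The paper does \emph{not} use semi-stability of the minimal branch. Instead it works with a local-minimum solution $\widetilde v_{\alpha,\mathrm{rad},\lambda}$ obtained by minimizing $J_{\lambda,\mathrm{rad}}$ over the order interval $[0,v_{\alpha,\lambda'}]$ (Proposition~4.5), observes that $J_{\lambda,\mathrm{rad}}(\widetilde v_{\alpha,\mathrm{rad},\lambda})<J_{\lambda,\mathrm{rad}}(0)<0$, and then derives the uniform bound from the \emph{negative energy} alone: combining $J<0$ with the Euler--Lagrange identity gives $\int_B|x|^\alpha(v+a)^{p+1}\,dx\le C$ and hence $\|v\|_{H^1}\le C$ (Proposition~4.7), after which the Brezis--Kato/Moser bootstrap of Theorem~2.2 yields uniform $C^{0,\gamma}$ bounds and the limit $\lambda\uparrow\lambda_*$ is classical. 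Your approach is the classical Crandall--Rabinowitz stability computation: test the second-variation inequality with $(w_n+1)^s-1$ and the equation with $(w_n+1)^{2s-1}-1$ to bound $\int|x|^\alpha(w_n+1)^{p+2s-1}$, then pass to the monotone weak limit and invoke Theorem~2.2. Both are valid. Your route is more self-contained for this theorem (it avoids the local-minimum construction, which in the paper is tied to the $H^1$ versus $C^1$ machinery of Theorem~1.1) and delivers higher $L^q(|x|^\alpha)$ integrability of the extremal solution; the paper's route is shorter once the local minimum is in hand, and that minimum is in any case needed later as the base point for the mountain-pass arguments of Section~5.
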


We then combine Theorems \ref{teo HXC radial introduction}, \ref{HXC partial introduction} and \ref{teorema existencia introduction} to guarantee the existence of a large number solutions to \eqref{problem weight introduction} and, in addition, we classify their symmetry.

\begin{theorem}[\textbf{multiple symmetric solutions}]\label{teorema multiplicidade introduction}
Suppose $N \geq 1$ and $\alpha > 0$. Let $\lambda_*>0$ be as above.
\begin{enumerate}[{\rm (I)}]
 \item Let $1 < p$ and if $N \geq 3$ also assume $p \leq 2^*_{\alpha} -1$. If $0< \lambda < \lambda_*$, then \eqref{problem weight introduction} has at least two radial classical solutions.
\item Let $1< p$ and $N=1,2$.  There exists $\alpha_0 = \alpha_0 (N,p) > 0$ such that the problem \eqref{problem weight introduction} has at least three non rotational equivalent classical solutions, if $\alpha > \alpha_0$ and $0< \lambda < \lambda_0(N,p,\alpha)$. Two of them are radially symmetric. If $N=1$, the third solution is not even. If $N=2$, the third solution is not radially symmetric but Schwarz foliated symmetric.
\item Let $N\geq 3$ and  $1< p < 2^*-1$. There exists $\alpha_0 = \alpha_0 (N,p) > 0$ such that the problem \eqref{problem weight introduction} has at least $\left[\frac{N}{2}\right] + 2$ non rotational equivalent classical solutions, if $\alpha > \alpha_0$ and $0< \lambda < \lambda_0(N,p,\alpha)$.  Two of them are radially symmetric. The third solution is not radially symmetric but Schwarz foliated symmetric. Each of the others $\left[ \frac{N}{2} \right] - 1$ solutions has a $\mathcal{O}(l) \times \mathcal{O}(N- l)$ symmetry for some $l \in \N$ such that $2 \leq N -  l \leq l$.
\end{enumerate}
\end{theorem}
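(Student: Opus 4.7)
The plan is to produce, in each admissible symmetric Sobolev space, a pair of critical points of the energy $\Phi$ restricted to that space, and then verify that the critical points arising from different symmetry groups are pairwise non-rotationally equivalent. The main tools are the local-minimizer equivalence Theorems \ref{teo HXC radial introduction} and \ref{HXC partial introduction} combined with the existence result Theorem \ref{teorema existencia introduction} and a classical mountain-pass argument tailored to each symmetry class.

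For part (I), fix $0<\lambda<\lambda_*$ and pick $\lambda'\in(\lambda,\lambda_*)$. Theorem \ref{teorema existencia introduction} provides a radial classical solution $w_{\lambda'}$ which serves as a super-solution for \eqref{problem weight introduction} at level $\lambda$; since $w\equiv 0$ is a strict sub-solution, monotone iteration in $C^1_{0,\mathrm{rad}}(\overline B)$ yields a minimal radial classical solution $\underline w_\lambda$, strictly positive by the strong maximum principle. After truncating the nonlinearity below zero so that $\Phi_{\mathrm{rad}}$ is well defined on the whole of $H^1_{0,\mathrm{rad}}(B)$, strict positivity implies directly that $\underline w_\lambda$ is a $C^1_{\mathrm{rad}}$-local minimum of $\Phi_{\mathrm{rad}}$, and Theorem \ref{teo HXC radial introduction} promotes it to an $H^1_{\mathrm{rad}}$-local minimum. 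Translating by $\underline w_\lambda$, the mountain pass theorem then produces a second radial critical point: the Ambrosetti--Rabinowitz condition holds for $(u+1)^p$ with $p>1$, Palais--Smale follows from the compact embedding $H^1_{0,\mathrm{rad}}(B)\hookrightarrow L^{p+1}(B,|x|^\alpha)$ (valid for $p\le 2^*_\alpha-1$ in $N\ge 3$ and for any $p>1$ in $N=1,2$), and classical regularity is supplied by Theorem \ref{teo regularidade radial}.

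For parts (II) and (III), I repeat this scheme in each partially symmetric space $H_l(B)$ with $2\le N-l\le l$. The compact embedding $H_l(B)\hookrightarrow L^{p+1}(B,|x|^\alpha)$ holds once $\alpha\ge\alpha_0(N,l,p)$, a regime accessible because $p<2^*-1<2_l-1=\tfrac{l+3}{l-1}$, so both Theorem \ref{HXC partial introduction} and a mountain-pass argument apply to produce a critical point $u_l\in H_l(B)$ with classical regularity via Theorem \ref{teo regularidade l}. The Schwarz foliated solution (and the non-even solution when $N=1$) is obtained by minimizing in the appropriate larger symmetry class and invoking the Schwarz foliated rearrangement result of Smets--Willem \cite{smets-willem} to identify the geometric nature of the minimizer.

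The main obstacle is to certify that the resulting $[N/2]+2$ critical points are pairwise non-rotationally equivalent. This is where the smallness conditions $\alpha>\alpha_0(N,p)$ and $0<\lambda<\lambda_0(N,p,\alpha)$ come in: for $\alpha$ large, the H\'enon-type weight $|x|^\alpha$ concentrates each $G$-symmetric mountain-pass solution near the $G$-orbit of maximal modulus inside its symmetry class, and these orbits are geometrically distinct across the subgroups $\mathcal O(N)$, $\mathcal O(l)\times\mathcal O(N-l)$ and the Schwarz foliated one. Consequently the corresponding mountain-pass levels strictly decrease as the symmetry is relaxed, which precludes any two of the $[N/2]+2$ solutions from coinciding modulo an element of $\mathcal O(N)$ and yields the stated classification.
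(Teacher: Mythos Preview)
Your argument for Part~(I) has a genuine gap at the critical exponent $p=2^*_\alpha-1$. You assert that Palais--Smale ``follows from the compact embedding $H^1_{0,\mathrm{rad}}(B)\hookrightarrow L^{p+1}(B,|x|^\alpha)$ (valid for $p\le 2^*_\alpha-1$ in $N\ge 3$)'', but this embedding is compact only for $p<2^*_\alpha-1$; at $p=2^*_\alpha-1$ it is merely continuous and the PS condition fails at certain levels. The paper devotes all of Section~5.2 to this critical case: it translates by the local minimum, proves that the translated functional satisfies $(PS)_c$ for $c<c_0:=S_\alpha^{(N+\alpha)/(\alpha+2)}\frac{\alpha+2}{2(N+\alpha)}$ (Lemma~\ref{subsobolev}), and then carries out Brezis--Nirenberg--type estimates with the explicit extremals $u_{\alpha,\epsilon}$ to show the mountain-pass level lies strictly below $c_0$ (Lemma~\ref{subc0}). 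Three separate regimes $\alpha<N-4$, $\alpha=N-4$, $\alpha>N-4$ must be handled, the last requiring the sharper inequality \eqref{majoracaosuperlinear} in place of \eqref{majoracaolinear}. None of this is automatic, and your proposal omits it entirely.

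Your non-equivalence argument for Parts~(II) and~(III) is also too heuristic to stand as a proof. The paper does not argue via ``concentration near $G$-orbits''; instead it shows (Proposition~\ref{teorema de multiplicidade niveis}) that as $\lambda\to 0^+$ the mountain-pass solutions in each symmetry class converge in $C^{2,\gamma}$ to mountain-pass solutions of the pure H\'enon problem $-\Delta Z=|x|^\alpha|Z|^{p-1}Z$, with convergence of the corresponding critical levels. For the limit problem it is a known fact \cite{smets-su-willem,byeon-wang,badiale-serra,li} that these levels are pairwise distinct once $\alpha>\alpha_0(N,p)$; hence for $\lambda$ small enough they remain distinct, which forces the solutions to be non-rotationally equivalent. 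This is why $\lambda_0$ depends on $\alpha$: the convergence rate and the a~priori bounds behind it come from \cite{phan-souplet} and are $\alpha$-dependent. Your concentration heuristic does not supply this mechanism, nor does it explain the role of the smallness of $\lambda$. Finally, the Schwarz foliated solution is not obtained by minimizing in a larger symmetry class: it is the mountain-pass solution in the full $H^1_0(B)$, and its foliated Schwarz symmetry is established a~posteriori via polarization arguments as in \cite{squassina-vanschaftingen,berchio-gazzola-weth}.
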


\begin{remark}\label{remark introduction more solutions}
In the case of $N \geq 4$ we have more information about the existence multiple solutions to \eqref{problem weight introduction}. Indeed, we prove the existence of more than two solutions even with $p+1 \geq 2^*$; cf. Remark \ref{remark multiple solutions supercritical}. The main ingredients in the proof are the results about $H^1$ versus $C^1$ minimizers applied to the space $H^1_{0, {\rm rad}}(B)$ combined with a careful asymptotic analysis, as $\lambda \rt 0^+$, of the associated symmetric mountain pass levels.
\end{remark}

In contrast, in the non-weighted case, with $1 < p$ if $N=1,2$, and $1< p\leq 2^*-1$ if $N \geq 3$,  it is proved in \cite{joseph-lundgren} that \eqref{equation non weight} has precisely two solutions for every $0 < \lambda < \lambda_{*}$. 

Our results also apply to the Hénon equation with inhomogeneous boundary condition, namely to
\begin{equation}\label{boundary inhomogeneous}
-\Delta u = | x |^{\alpha} u^p\quad \text{in} \quad B, \quad u > 0 \quad \text{in} \quad B,  \quad \text{with} \quad u = a \quad \text{on} \quad \partial B.
\end{equation}
Indeed, if we write $u = a (w+1)$, then \eqref{boundary inhomogeneous} is equivalent to \eqref{problem weight introduction} with $a^{p-1} = \lambda$. So, Theorems \ref{teorema existencia introduction} and \ref{teorema multiplicidade introduction} can be restated in the context of \eqref{boundary inhomogeneous}.

Since the works of Hénon \cite{henon} and Ni \cite{ni}, the problem \eqref{boundary inhomogeneous} with $a =0$ has been extensively studied; cf. \cite{smets-su-willem, pacella, byeon-wang, badiale-serra, pistoia-serra, ederson-djairo-olimpio, cao-peng, smets-willem, wei, GGN} and references therein. Neverthless, less attention has been devoted to the study of \eqref{boundary inhomogeneous} with $a > 0$ and, as far as we know, \cite{phan-souplet} is the only published paper on this subject. 

One of the first applications of the result of Brezis and Nirenberg \cite{brezis-nirenbergHXC} about $C^1$ versus $H^1$ local minimizers appears in concave-convex problems as in Ambrosetti et al. \cite{abc}, whose  weighted version reads
\begin{equation}\label{concave-convex}
\left\{
\begin{array}{l}
-\Delta u  = \lambda |x|^\beta u^q + |x|^{\alpha} u^p \quad \text{in} \quad B,\\
u > 0 \quad \text{in} \quad B, \quad u = 0 \quad \text{on} \quad \partial B,
\end{array}
\right.
\end{equation}
with $\beta \geq 0$, $\alpha \geq 0$ and $0 < q < 1 < p \leq 2^*_\alpha -1$; cf. \cite{clement-djairo-mitidieri} where \eqref{concave-convex} is considered in the case of $q=1$. We stress that Theorems \ref{teo HXC radial introduction} and \ref{HXC partial introduction} can be applied to study equation \eqref{concave-convex} and to obtain results in the sense of Theorems \ref{teorema existencia introduction} and \ref{teorema multiplicidade introduction} above.

Our results also apply to some weighted problems posed in exterior domains, as to
\begin{equation}\label{exterior introduction}
 \left\{
\begin{array}{l}
 -\Delta U = \gd{\frac{U^p}{|x|^{\beta}}} \quad \text{in} \quad \R^N \menos \overline{B}, \ \ \ \ U  >  0 \quad \text{in} \quad \R^N \menos \overline{B},\\
 U  =  a \quad \text{on} \quad \partial B, \ \ U  \rt 0 \quad \text{as} \quad |x| \rt \infty,
\end{array}
 \right.
\end{equation}
with $N \geq 3$, $a\geq0$, $\beta \in \R$ and $p>0$; cf. Section \ref{section related problems}. In this direction our results slightly complement the results in \cite[Theorem 2]{davilla}. 

This manuscript is organized as follows. In Section \ref{regularity results} we prove some regularity results for symmetric weak solutions of \eqref{general equation introduction}, namely Theorems \ref{teo regularidade radial} and \ref{teo regularidade l}. In Sections \ref{H1XC1 radial}, \ref{section existencia} and \ref{section multiplicidade} we prove Theorems \ref{teo HXC radial introduction}, \ref{teorema existencia introduction} and \ref{teorema multiplicidade introduction}, respectively. Then, in Section \ref{section related problems}, we describe how our procedure can be used to prove results on the existence and the multiplicity of solutions to the equation \eqref{exterior introduction}.

\section{Regularity results} \label{regularity results}
In this section we prove classical regularity for symmetric weak solutions of elliptic equations of the type \eqref{general equation introduction}. We point out that problems with supercritical growth in the classical sense are involved and so classical regularity results, as in Brezis and Kato \cite{brezis-kato} based on the Moser's iteration technique \cite{moser}, cannot be directly applied. In addition, since the functionals may not be defined on $H^1_0(B)$, the principle of symmetric criticality \cite{palais} cannot be applied as well.

\subsection{Radial solutions}
We recall that from \cite[eq. (4)]{ni}, for $N\geq 3$ and any $u \in H^1_{0, {\rm rad}}(B)$ we have
\begin{equation}\label{des ni regularidade}
|u(x)| \leq \frac{1}{\sqrt{\omega_N (N-2)}} \frac{\| \nabla u\|_2}{|x|^{\frac{N-2}{2}}}, \quad \forall \, x \in B\menos\{0\},
\end{equation}
where $\omega_N$ stands for the surface area of $S^{N-1} \con \R^N$.

Then, \eqref{crescimento f regularidade introduction}, \eqref{des ni regularidade} for $N\geq 3$ and the classical Sobolev embeddings of $H^1_0(B)$ for $N=1,2$ guarantee that
\[
 \Phi_{{\rm rad}}(u) = \frac{1}{2} \int_B|\nabla u|^2 dx - \int_B F(x,u)\, dx, \quad u \in H^1_{0,{\rm rad}}(B),
\]
is a $C^1(H^1_{0, {\rm rad}}(B), \R)$-functional.

\begin{definition}
We say that $u \in H^1_{0, {\rm rad}}(B)$ is a weak radial solution of \eqref{general equation introduction} if $\Phi_{{\rm rad}}'(u) = 0$, that is, if
\[
\int_B \nabla u \nabla v dx = \int_B f(x,u)v dx, \quad \forall \, v \in H^1_{0,{\rm rad}}(B).
\]
\end{definition}

\begin{theorem}\label{teo regularidade radial}
Assume \eqref{crescimento f regularidade introduction}. If $u \in H^1_{0, {\rm rad}}(B)$ is a weak radial solution of \eqref{general equation introduction}, then $u \in W^{2,t}(B) \cap W^{1,t}_0 (B)$ for every $t \geq 1$ and it strongly solves \eqref{general equation introduction}. Consequently, by classical embeddings of Sobolev spaces, $u \in C^{1, \theta}(\overline{B})$ for every $0 < \theta < 1$.
\end{theorem}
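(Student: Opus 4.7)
Since $\alpha \geq 0$ makes the weight $|x|^{\alpha}$ bounded on $B$, the growth condition \eqref{crescimento f regularidade introduction} implies that once $u \in L^{\infty}(B)$, the right-hand side $f(\cdot, u)$ lies in $L^{\infty}(B) \subset L^t(B)$ for every $t \geq 1$. Classical Calder\'on--Zygmund theory on the smooth domain $B$ then gives $u \in W^{2,t}(B) \cap W^{1,t}_0(B)$ for every $t \geq 1$, and the embedding $W^{2,t} \hookrightarrow C^{1,\theta}(\overline{B})$ for large $t$ yields the claimed H\"older regularity. The whole task is therefore to prove $u \in L^{\infty}(B)$; Ni's inequality \eqref{des ni regularidade} already shows $u \in L^{\infty}_{\mathrm{loc}}(B \setminus \{0\})$, so the problem localizes at $x=0$.

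\noindent The proof runs a Brezis--Kato/Moser-type iteration adapted to the weighted setting, the key tool being Ni's weighted Sobolev embedding $H^1_{0,\mathrm{rad}}(B) \hookrightarrow L^{2^*_{\alpha}}(B, |x|^{\alpha}\,dx)$. Testing the weak equation against $\phi = u |u_L|^{2\beta}$, with $|u_L| = \min(|u|, L)$ a truncation and $\beta > 0$ a parameter to be iterated, the left-hand side $\int_B \nabla u \cdot \nabla \phi\,dx$ controls $\|\nabla(u |u_L|^{\beta})\|_2^2$ up to $\beta$-dependent constants, and Ni's embedding bounds this from below by $\left(\int_B |x|^{\alpha}(u|u_L|^{\beta})^{2^*_{\alpha}}\,dx\right)^{2/2^*_{\alpha}}$. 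On the right, \eqref{crescimento f regularidade introduction} produces the critical term $\int_B |x|^{\alpha}|u|^{q+2\beta+1}\,dx$; writing $|u|^{q+2\beta+1} = |u|^{q-1}\cdot|u|^{2(\beta+1)}$ and applying H\"older with exponents $\left(\tfrac{2^*_{\alpha}}{2^*_{\alpha}-2}, \tfrac{2^*_{\alpha}}{2}\right)$ and the matching splitting $|x|^{\alpha} = |x|^{\alpha(2^*_{\alpha}-2)/2^*_{\alpha}}\cdot|x|^{2\alpha/2^*_{\alpha}}$, the critical identity $q-1 = 2^*_{\alpha}-2$ makes the first H\"older factor equal to $\left(\int_B |x|^{\alpha}|u|^{2^*_{\alpha}}\right)^{(2^*_{\alpha}-2)/2^*_{\alpha}}$, which is finite. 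Restricting that factor to $\{|u|>L\}$ and invoking absolute continuity of the integral, it can be made arbitrarily small by taking $L$ large, so the critical term is absorbed on the left.

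\noindent Running the resulting Caccioppoli-type estimate along a geometric sequence of $\beta$'s in the standard Moser manner, and then letting $L \to \infty$ by monotone convergence, yields $u \in L^s(B, |x|^{\alpha}\,dx)$ for every $s < \infty$; reinserting this into the equation and using unweighted $L^p$-theory upgrades it to $u \in L^{\infty}(B)$, at which point the first paragraph concludes the argument. The cases $N=1,2$ require no weighted considerations, since the classical embedding $H^1_0(B)\hookrightarrow L^p(B)$ for all $p<\infty$ already closes the iteration directly. The main technical obstacle is thus the genuinely critical case $N\geq 3$ with $q = 2^*_{\alpha}-1$: without the truncation $|u_L|$ and the absolute-continuity step, the absorption constant cannot be pushed below the weighted Sobolev constant and the iteration would stall. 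The whole scheme hinges on the precise compatibility between Ni's weighted embedding and the critical exponent built into \eqref{crescimento f regularidade introduction}.
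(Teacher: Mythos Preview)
Your approach is correct but takes a different route from the paper's. You run a weighted Moser iteration directly, using the embedding $H^1_{0,\mathrm{rad}}(B) \hookrightarrow L^{2^*_\alpha}(B,|x|^\alpha\,dx)$ in place of the classical Sobolev embedding. The paper instead exploits Ni's \emph{pointwise} bound $|u(x)| \leq C\,\|\nabla u\|_2\,|x|^{-(N-2)/2}$ much more directly: it checks first that $|x|^{\frac{2N\alpha}{N+2}}|u|^{\frac{4N\alpha}{(N-2)(N+2)}} \in L^\infty(B)$, giving $f(x,u)\in L^{2N/(N+2)}(B)$, and then that $|x|^{\alpha N/2}|u|^{\alpha N/(N-2)} \in L^\infty(B)$, so that writing $|f(x,u)| \leq a(x)(1+|u|)$ one has $a \in L^{N/2}(B)$ in the \emph{unweighted} sense. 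The standard Brezis--Kato lemma (\cite{brezis-kato}, \cite[B.3]{struwe}) then applies as a black box. In short, the paper reduces the weighted supercritical problem to the classical critical one via pointwise decay, while you re-implement the iteration in the weighted setting; both reach $u \in L^t(B)$ for all $t$, but the paper's route is shorter and avoids reproving the iteration.

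One genuine step you skip: invoking Calder\'on--Zygmund at the end presumes $u$ solves $-\Delta u = f(x,u)$ against \emph{all} test functions, but a priori the weak radial formulation only tests against $H^1_{0,\mathrm{rad}}(B)$. The paper handles this explicitly (and it is needed there too, before applying unweighted Brezis--Kato): once $f(x,u)$ has enough integrability, let $w$ be the strong solution of $-\Delta w = f(x,u)$ with $w=0$ on $\partial B$; then $w$ is radial by uniqueness, and testing both $u$ and $w$ against radial $\varphi$ with $-\Delta\varphi = \psi \in C^\infty_{c,\mathrm{rad}}(B)$ gives $\int_B(w-u)\psi\,dx=0$ for all such $\psi$, hence $w=u$ a.e. Your argument needs this identification step before the final regularity claim.
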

\begin{proof}
Let $u \in H^1_{0,{\rm rad}}(B)$ be a weak solution of \eqref{general equation introduction}. Then, by \eqref{crescimento f regularidade introduction}, $f(x,u) \in L^t(B)$ for all $t \geq 1$ in case $N=1,2$.  On the other hand, in case $N \geq 3$, from \eqref{crescimento f regularidade introduction} we infer that
\begin{multline}\label{conta fundamental radial}
|f(x,u)|^{\frac{2N}{N+2}} \leq (C |x|^{\alpha}(1 + |u|^{\frac{N+2+ 2\alpha}{N-2}}))^\frac{2N}{N+2}\\ \leq C ( |x|^{\frac{2N\alpha}{N+2}} + |u|^{\frac{2N}{N-2}} |x|^{\frac{2N\alpha}{N+2}}|u|^{\frac{4N\alpha}{(N-2)(N+2)}}),
\end{multline}
and, by \eqref{des ni regularidade}, $|x|^{\frac{2N\alpha}{N+2}}|u|^{\frac{4N\alpha}{(N-2)(N+2)}} \in L^{\infty}(B)$. Then $f(x,u) \in L^{\frac{2N}{N+2}}(B)$ in case $N \geq 3$.

Let $w$ be the strong solution of
\[
-\Delta w = f(x,u) \quad \hbox{in} \quad B, \quad \hbox{with} \quad w = 0 \quad \hbox{on} \quad \partial B.
\]
Then, by the standard elliptic regularity for second order elliptic operators as in \cite{agmon-douglis-nirenberg}, $w \in W^{2,t} (B) \cap W^{1,t}_0(B)$ for all $t\geq 1$ in case $N=1,2$ and $w \in W^{2,\frac{2N}{N+2}} (B) \cap W^{1,\frac{2N}{N+2}}_0(B)$ in case $N \geq 3$. In addition, $w$ is radially symmetric. Then, for every $\varphi \in C_{0, {\rm rad}}^{\infty} (\overline{B})$,
\begin{equation}\label{partes-radial-Ni}
\int_B w (-\Delta \varphi) dx = \int_B (-\Delta w) \varphi dx = \int_B f(x,u)\varphi dx  = \int_B \nabla u \nabla \varphi = \int_B u (-\Delta \varphi) dx.
\end{equation}
Let $\psi \in C_{c, {\rm rad}}^{\infty} (B)$ be an arbitrary function and $\varphi$ be the solution of
\[
-\Delta \varphi = \psi \quad \hbox{in} \quad B, \quad \hbox{with} \quad \varphi =0 \quad \hbox{on} \quad \partial B.
\]
Then $\varphi \in C_{0, {\rm rad}}^{\infty}(\overline{B})$ and so, by means of \eqref{partes-radial-Ni},
\[
\int_{B} (w -u) \psi dx = 0 \quad \forall \, \psi \in C_{c, {\rm rad}}^{\infty} (B),
\]
which implies
\[
\int_0^1 [w(r)-u(r)]\psi(r) r^{N-1} dr = 0 \quad \forall \, \psi \in C^{\infty}_c((0,1)).
\]
Therefore, $w =u$ a.e. in $B$.

At this point we have proved that $u$ strongly solves \eqref{general equation introduction} and that $u \in W^{2,t} (B) \cap W^{1,t}_0(B)$ for all $t\geq 1$ in case $N=1,2$ and $u \in W^{2,\frac{2N}{N+2}} (B) \cap W^{1,\frac{2N}{N+2}}_0(B)$ in case $N \geq 3$.

In case $N\geq 3$, since $W^{2,\frac{2N}{N+2}}(B) \hookrightarrow H^{1}_0(B)$ and since $u \in W^{2,\frac{2N}{N+2}} (B) \cap W^{1,\frac{2N}{N+2}}_0(B)$ strongly solves \eqref{general equation introduction} we also obtain that $u$ weakly solves \eqref{general equation introduction} in the sense of $H^1_0(B)$. In addition, by \eqref{crescimento f regularidade introduction} we write
\begin{equation}\label{legal des regularidade}
|f(x,u)| \leq C |x|^{\alpha}(1 + |u|^{\frac{N + 2 + 2\alpha}{N-2}})  = a(x)(1 + |u|), \quad \forall \, x \in B,
\end{equation}
with
\[
a(x) = C |x|^{\alpha}\dfrac{1 + |u|^{\frac{N + 2 + 2\alpha}{N-2}}}{1 + |u|}.
\]
Then
\[
\begin{array}{rcl}
(a(x))^{\frac{N}{2}} &\leq& \left(C |x|^{\alpha} (1 + |u|^{\frac{2(2 + \alpha)}{N-2}})\right)^{\frac{N}{2}} \leq C_1\left( |x|^{\frac{\alpha N}{2}} +  |x|^{\frac{\alpha N}{2}}|u|^{\frac{(2+\alpha)N}{N-2}} \right)\vspace{2pt}\\
&=& C_1\left( |x|^{\frac{\alpha N}{2}} +  |x|^{\frac{\alpha N}{2}}|u|^{\frac{\alpha N}{N-2}} |u|^{\frac{2N}{N-2}} \right)
\end{array}
\]
and by Ni's pointwise estime \eqref{des ni regularidade} we infer that $|x|^{\frac{\alpha N}{2}}|u|^{\frac{\alpha N}{N-2}} \in L^{\infty}(B)$. Therefore, $a(x) \in L^{\frac{N}{2}}(B)$. Then, by \cite[Theorem 2.3]{brezis-kato}, see also \cite[B.3 Lemma]{struwe}, and \cite[Lemma 9.17]{gilbarg-trudinger}, it follows that $u \in W^{2,t}(B)\cap W^{1,t}_0(B)$ for all $t\geq 1$ and strongly solves \eqref{general equation introduction}. 
\end{proof}

\begin{remark}\label{classical regularity radial}
If $f(x,u)$ satisfies \eqref{crescimento f regularidade introduction} and is also suitably regular, that is the case of $f(x,u) = \lambda |x|^{\alpha}|u+1|^{p-1}(u+1)$ with $\lambda \in \R$, $\alpha \geq 0$ and $p >1$, then we combine Theorem {\rm \ref{teo regularidade radial}} and the classical Schauder's estimates to obtain that any weak radial solution of \eqref{general equation introduction} lies in $C^{2, \gamma}(\overline{B})$, for a certain $0 < \gamma < 1$, and classically solves \eqref{general equation introduction}.
\end{remark}

\subsection{Partially symmetric solutions}
In this part
\begin{equation}\label{condition partial symmetry}
N\geq 4, \ \ l \in \N \ \ \text{s.t.} \ \ 2 \leq N- l \leq l, \ \ \alpha > \max\left\{\frac{(N+ 2)^2}{2(N-2)}, \frac{(N-2)^2 + 5}{ N-3}\right\}.
\end{equation}
Set $2_ l = \frac{2(l+1)}{l-1}$ and
\[
\begin{array}{rcl}
H_{l}(B) & = & \{ u \in H^1_0(B); u = u \circ O \ \ \text{for every} \ \ O \in \mathcal{O}(l) \times \mathcal{O}(N-l) \}\\
&= & \{ u \in H^1_{0}(B); u(y,z) = u(|y|, |z|), \,x= (y,z) \in B \},
\end{array}
\]
where $y = (y_1, \ldots, y_l)$,  $z = (z_1, \ldots, z_{N-l})$. Then, as proved in \cite[Theorem 2.1]{badiale-serra}, we have $H_{l}(B) \hookrightarrow L^{2_l}(B, |x|^{\alpha})$ continuously. Observe that instead of $\alpha > N+2$, the inequality $\alpha > \max\left\{\frac{(N+ 2)^2}{2(N-2)}, \frac{(N-2)^2 + 5}{ N-3}\right\}$ is the correct assumption for the proof of \cite[Theorem 2.1]{badiale-serra}.

Here we assume \eqref{condition partial symmetry} and  the symmetry and growth conditions \eqref{crescimento f l introduction}. Under these hypotheses
\[
 \Phi_{l}(u) = \frac{1}{2} \int_B|\nabla u|^2 dx - \int_B F(x,u)\, dx, \quad u \in H_{l}(B),
\]
is a $C^1(H_{l}(B), \R)$-functional.

\begin{definition}
We say that $u \in H_{l}(B)$ is a weak $(l, N-l)$-symmetric solution of \eqref{general equation introduction} if $\Phi_{l}'(u) = 0$, that is, if
\[
\int_B \nabla u \nabla v dx = \int_B f(x,u)v dx, \quad \forall \, v \in H_{l}(B).
\]
\end{definition}

\begin{theorem}\label{teo regularidade l}
Assume $N\geq 4$, $l \in \N$, $2 \leq N- l \leq l$ and that \eqref{crescimento f l introduction} holds. Then there exists $\alpha_0 = \alpha_0(N, l, p) >0$ with the following property: if $\alpha \geq \alpha_0(N,l,p)$ and $u \in H_{l}(B)$ is a weak $(l,N-l)$-symmetric solution of \eqref{general equation introduction}, then $u \in W^{2,t}(B) \cap W^{1,t}_0 (B)$ for every $t \geq 1$ and it strongly solves \eqref{general equation introduction}. Consequently, by classical embeddings of Sobolev spaces, $u \in C^{1, \gamma}(\overline{B})$ for every $0 < \gamma < 1$.
\end{theorem}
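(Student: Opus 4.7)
My plan is to follow the template of the proof of Theorem \ref{teo regularidade radial} line by line, with Ni's pointwise estimate \eqref{des ni regularidade} replaced by an analogous weighted pointwise bound for $H_l(B)$-functions of the form $|u(x)| \leq C \| \nabla u\|_2 \, \rho(x)^{-\beta}$, for some $\beta > 0$ and some weight $\rho$ that vanishes on the fixed-point set of the $\mathcal{O}(l) \times \mathcal{O}(N-l)$ action. A pointwise bound of this type is precisely what underlies the Badiale--Serra weighted embedding $H_l(B) \hookrightarrow L^{2_l}(B, |x|^\alpha)$ invoked above, and its decay exponents will govern the threshold $\alpha_0(N, l, p)$.

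First, I would show $f(\cdot, u) \in L^{2N/(N+2)}(B)$ by imitating the splitting in \eqref{conta fundamental radial}: since the growth exponent $p$ is strictly subcritical, $p < 2_l - 1$, the pointwise bound absorbs the high power of $|u|$ into an $L^\infty$-factor provided $\alpha$ exceeds a threshold $\alpha_0(N,l,p)$. Next, I let $w \in W^{2, 2N/(N+2)}(B) \cap W^{1, 2N/(N+2)}_0(B)$ be the strong solution of $-\Delta w = f(x, u)$ with $w = 0$ on $\partial B$; by uniqueness, $w$ inherits the $(l, N-l)$-symmetry from its right-hand side. I then reproduce the identity \eqref{partes-radial-Ni} with test functions $\varphi \in C^{\infty}_{0, l}(\overline{B})$ obtained by solving $-\Delta \varphi = \psi$ for arbitrary $\psi \in C^{\infty}_{c, l}(B)$, which forces $\int_B (w - u) \psi \, dx = 0$ for every such $\psi$. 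Passing to the coordinates $r = |y|$, $s = |z|$, in which $dx$ reduces to a positive multiple of $r^{l-1} s^{N-l-1} dr\, ds$ on the quadrant $\{(r, s) : r, s \geq 0,\ r^2 + s^2 < 1\}$, a density argument yields $u = w$ a.e., so $u \in W^{2, 2N/(N+2)}(B)$ strongly solves \eqref{general equation introduction} and in particular lies in $H^1_0(B)$.

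To bootstrap to $W^{2, t}(B)$ for every $t \geq 1$, I would then rewrite $f(x, u) = a(x)(1 + |u|)$ with $a(x) \leq C |x|^{\alpha}(1 + |u|^{p-1})$ and verify $a \in L^{N/2}(B)$ using the pointwise bound once more; this step is again what singles out the value of $\alpha_0(N, l, p)$. An application of \cite[Theorem 2.3]{brezis-kato} then produces $u \in L^t(B)$ for every $t \geq 1$, and classical $L^t$-estimates \cite{agmon-douglis-nirenberg} upgrade this to $u \in W^{2, t}(B) \cap W^{1, t}_0(B)$; the embedding $W^{2, t}(B) \hookrightarrow C^{1, \gamma}(\overline{B})$ for $t$ large finishes the proof.

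The main obstacle is pinning down the appropriate pointwise estimate for $(l, N-l)$-symmetric $H^1_0$-functions and extracting the explicit threshold $\alpha_0(N, l, p)$. Unlike the radial case, where \eqref{des ni regularidade} gives a clean one-variable bound in $|x|$, the partial symmetry requires simultaneous use of rotational invariance in both the $y$ and $z$ variables, and the balancing of the resulting exponents against $\alpha$ and $p$ is what forces the non-trivial dependence of $\alpha_0$ on all three parameters $N$, $l$, and $p$.
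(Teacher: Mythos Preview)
Your proposal diverges from the paper's argument at the crucial bootstrap stage, and the divergence hides a genuine obstacle. In the radial case the mechanism that makes Brezis--Kato work is that $|x|^{\alpha}$ vanishes exactly where a radial $H^1_0$-function can blow up (the origin), so products like $|x|^{\alpha N/2}|u|^{\alpha N/(N-2)}$ land in $L^{\infty}$. For $(l,N-l)$-symmetric functions the potential singular set is the pair of linear subspaces $\{y=0\}\cup\{z=0\}$, while the weight $|x|^{\alpha}$ still vanishes only at the origin. Hence no choice of threshold $\alpha_0$ will make $|x|^{\alpha}|u|^{p-1}$ bounded, and the step ``the pointwise bound absorbs the high power of $|u|$ into an $L^{\infty}$-factor'' fails as written; likewise the verification $a\in L^{N/2}(B)$ needed for Brezis--Kato does not follow from any pointwise estimate of the form $|u(x)|\leq C\rho(x)^{-\beta}$ with $\rho$ vanishing on the fixed-point set, because $|x|^{\alpha}$ and $\rho^{-\beta}$ are incomparable away from $0$.

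The paper avoids this mismatch entirely: it never uses a pointwise estimate or Brezis--Kato in the partially symmetric case. Instead it reads off $f(\cdot,u)\in L^{2_l/p}(B)$ directly from the Badiale--Serra embedding $H_l(B)\hookrightarrow L^{2_l}(B,|x|^{\alpha})$, identifies $u$ with the strong solution $w\in W^{2,2_l/p}$ via the same duality computation you describe, and then bootstraps using the weighted Sobolev embeddings for $W^{2,q}_l(B)$ from \cite[Corollary 1.5]{ederson-djairo-olimpio}. The iteration gain is $\theta/q\geq (l-1)/(p(l-1)-4)$, which exceeds $1$ precisely because $p<2_l-1$; this is where the subcriticality hypothesis is actually used, and it is also the step that generates the dependence of $\alpha_0$ on $(N,l,p)$. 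Your duality and symmetry-reduction paragraph is fine and matches the paper; it is the regularity bootstrap that needs to be replaced.
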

\begin{proof}
By \eqref{crescimento f l introduction} and the imbedding $H_{l}(B) \hookrightarrow L^{2_l}(B, |x|^{\alpha})$, it follows that $f(x,u) \in L^{\frac{2_l}{p}}(B)$. Let $w$ be the strong solution of
\[
-\Delta w = f(x,u) \quad \text{in} \quad B, \quad \text{with} \quad w = 0 \quad \text{on} \quad \partial B.
\]
Then, by the standard elliptic regularity for second order elliptic operators as in \cite{agmon-douglis-nirenberg}, $w \in W^{2,\frac{2_l}{p}}_l (B) \cap W^{1,\frac{2_l}{p}}_0(B)$. Then, for every $\varphi \in C_{0, l}^{\infty} (\overline{B})$,
\begin{equation}\label{partes-badiale-serra}
\int_B w (-\Delta \varphi) dx = \int_B (-\Delta w) \varphi dx = \int_B f(x,u) \varphi dx  = \int_B \nabla u \nabla \varphi = \int_B u (-\Delta \varphi) dx.
\end{equation}
Given $\psi \in C_{c, l}^{\infty} (B)$ let $\varphi$ be the solution of
\[
-\Delta \varphi = \psi \quad \hbox{in} \quad B, \quad \hbox{with} \quad \varphi =0 \quad \hbox{on} \quad \partial B.
\]
Then $\varphi \in C_{0, l}^{\infty}(\overline{B})$ and so, by means of \eqref{partes-badiale-serra},
\[
\int_{B} (w -u) \psi dx = 0 \quad \forall \, \psi \in C_{c, l}^{\infty} (B),
\]
which implies
\[
\int_0^1\int_0^{\sqrt{1- t^2}} [w(s,t)-u(s,t)]\psi(s,t) s^{l-1}t^{N-l-1} dsdt = 0 \quad  \forall \, \psi \in C^{\infty}_c(\mathcal{Q}),
\]
where $\mathcal{Q} = \{(s,t) \con \R^2; s,t> 0 \: \text{and} \: 0 < s^2 + t^2 < 1 \}$.
Therefore, $w =u$ a.e. in $B$.

At this point we have proved that $u \in W^{2,\frac{2_l}{p}}_l (B) \cap W^{1,\frac{2_l}{p}}_0(B)$ and strongly solves \eqref{general equation introduction}.

If $p(l-1) - 4 \leq 0$ and $\alpha$ is large enough, then $u \in W^{2,\theta}(B)$ for all $\theta \geq 1$ as a consequence of \cite[Corollary 1.5]{ederson-djairo-olimpio} with $m =2$ and the standard regularity for second order elliptic operators.

In case $p(l-1) -4 > 0$, \cite[Corollary 1.5]{ederson-djairo-olimpio} is the key in the bootstrap argument. To show that $u \in W^{2,\theta} (B)$ for all $\theta \geq 1$ when $\alpha$ is large enough, we combine \cite[Corollary 1.5]{ederson-djairo-olimpio} with $m=2$ and the standard regularity for second order elliptic operators to prove the statement:
\[
\hbox{if } \  u \in W^{2,q}_{l} (B) \ \hbox{for some} \ q \geq \frac{2_l}{p}, \ \hbox{then} \ u\in W^{2,\theta} (B) \ \hbox{with} \ \frac{\theta}{q} \geq \frac{l-1}{p (l-1) -4}.
\]
And the bootstrap argument works because $\frac{l-1}{p (l-1) -4} >1$ is sharply guaranteed by the hypothesis $p <2_l -1$. 
\end{proof}

\begin{remark}\label{remark regularity l}
If $f(x,u)$ satisfies \eqref{crescimento f l introduction} and is also suitably regular, that is the case of $f(x,u) = \lambda |x|^{\alpha}|u+1|^{p-1}(u+1)$ with $\lambda \in \R$, $\alpha \geq 0$ and $p >1$, then we combine Theorem {\rm \ref{teo regularidade l}} and the classical Schauder's estimates to obtain that any weak $(l,N-l)$-symmetric solution of \eqref{general equation introduction} lies in $C^{2, \gamma}(\overline{B})$, for a certain $0 < \gamma < 1$, and classically solves \eqref{general equation introduction}.
\end{remark}

\section{$H^1$ versus $C^1$ local minimizers: radially symmetric functions} \label{H1XC1 radial} In this part we consider the functional
\[
 \Phi_{{\rm rad}}(u) = \frac{1}{2} \int_B|\nabla u|^2 dx - \int_B F(x,u)\, dx, \quad u \in H^1_{0,{\rm rad}}(B).
\]
Here we suppose that $f$ verifies the growth and symmetry conditions \eqref{crescimento f regularidade introduction} and we prove Theorem \ref{teo HXC radial introduction}. We also mention \cite[Proposition 3.9]{djairo-gossez-ubilla2009} and \cite[Lemma 2.2]{brock-iturriaga-ubilla} from where we borrow some ideas.

\begin{proof}[\textbf{Proof of Theorem {\rm \ref{teo HXC radial introduction}}}] ${}$

\nin Step 1. We infer that $u_0 \in W^{2,t}(B) \cap W^{1,t}_0 (B)$ for every $t \geq 1$ and strongly solves \eqref{general equation introduction}. In particular, $u_0 \in C^{1, \gamma}(\overline{B})$ for every $0 < \gamma < 1$.

Let $(\rho_n)$ a regularizing sequence formed by radial functions, for example,
\[
\rho(x)  = \left\{
\begin{array}{l}
 \exp\left( \frac{1}{|x|^2 -1} \right), \: |x| < 1,\\
 0, \:|x| \geq 1,
\end{array}
\right.
\ \text{and} \ \rho_n(x) = \left( \int_{\R^N} \rho(x) \, dx  \right)n^N \rho(nx), n \in \N.
\]
Then observe that $\rho_n * w$ is radially symmetric if $w$ is radially symmetric. Then $C_{c, {\rm rad}}^{\infty}(B)$, and consequently $C^1_{0, {\rm rad}}(\overline{B})$, is dense in $H^1_{0, {\rm rad}}(B)$.  As in \eqref{conta fundamental radial}, we have that $f(x, u_0) \in L^{\frac{2N}{N+2}}(B)$ in case $N \geq 3$, and $f(x, u_0) \in L^t(B)$ for all $t \geq 1$ in case $N=1,2$.

 From \eqref{minimum C1 introduction}, we have
\[
\int_B \nabla u_0 \nabla v dx - \int_B f(x, u_0)v dx  = 0 \quad \forall \, v \in C^1_{0, {\rm rad}}(\overline{B}),
\]
and since $C^1_{0, {\rm rad}}(\overline{B})$ is dense in $H^1_{0, {\rm rad}}(B)$ and the above integrability of $f(x,u_0)$,
\[
\int_B \nabla u_0 \nabla v dx - \int_B f(x, u_0)v dx  = 0 \quad \forall \, v \in H^1_{0, {\rm rad}}(B),
\]
that is, $u_0$ is a critical point of $\Phi_{{\rm rad}}$. Therefore, by Theorem \ref{teo regularidade radial}, $u_0 \in W^{2,t}(B) \cap W^{1,t}_0 (B)$ for every $t \geq 1$ and strongly solves \eqref{general equation introduction}. In particular, $u_0 \in C^{1, \gamma}(\overline{B})$ for every $0 < \gamma < 1$.
\medbreak

\nin Step 2. We infer that $u_0$ satisfies \eqref{minimum H1 introduction}.

For each $j \in \N$ consider the truncated functional
\[
\Phi_j(u) = \frac{1}{2}\int_B |\nabla u|^2 dx - \int_B F_j(x, u)dx, \quad u \in H^1_{0, {\rm rad}}(B),
\]
where $F_j(x,u) = \int_0^u f_j(x,s)ds$ and
\[
f_j(x,s) = \left\{
\begin{array}{l}
 f(x,-j), \quad \text{if} \quad s < -j,\\
 f(x,s), \quad \text{if} \quad -j \leq s \leq j,\\
 f(x,j), \quad \text{if} \quad s >j.
\end{array}
\right.
\]
Then from \eqref{crescimento f regularidade introduction}, there exists $C> 0$, that does not depend on $j$, such that
\[
|F_j(x,u)| \leq C |x|^{\alpha}(1 + |u|^{q+1}), \quad \forall \, u \in \R, \, \forall \, x \in B,
\]
for certain $q> 1$ in case $N=1,2$ and $q = \frac{N+2(1+\alpha)}{N-2}$ in case $N\geq3$. Also observe that, by the dominated convergence theorem, $\Phi_j(u) \rt \Phi(u)$, as $j \rt \infty$, for every $u \in H^1_{0, {\rm rad}}(B)$.

Suppose that \eqref{minimum H1 introduction} does not hold. Then, by the continuous embedding \linebreak $H^1_{0, {\rm rad}}(B) \hookrightarrow L^{q+1}(B, |x|^{\alpha})$, for each $n \in \N$ there exists $v_{n} \in H^1_{0, {\rm rad}}(B)$ and $j_{n} \in \N$ sufficiently large such that $| v_{n} - u_0|_{q+1, \alpha} \leq \frac{1}{n}$, $\Phi_{{\rm rad}}(u_0) = \Phi_{j_{n}}(u_0)$, $ \Phi_{{\rm rad}}(v_{n})  < \Phi_{{\rm rad}}(u_0)$ and $\Phi_{j_{n}}(v_{n})< \Phi_{j_{n}}(u_0)$;  recall that $u_0 \in L^{\infty}(B)$. In addition, we take $(j_n)$ increasing such that $j_n \rt \infty$ as $n \rt \infty$. Then
\[
\Phi_{j_{n}}(u) \geq \frac{1}{2} \int_B |\nabla u|^2dx  - C \int_B |x|^{\alpha} |u|^{q+1}dx - C, \quad \forall \, u \in H^1_{0, {\rm rad}}(B), \quad \forall \, n \in \N,
\]
and, from the subcritical growth of $F_{j_{n}}$, there exists $u_{n} \in H^1_{0, {\rm rad}}(B)$ such that $| u_{n} - u_0|_{q+1, \alpha} \leq \frac{1}{n}$ and
\[
\Phi_{j_{n}}(u_{n}) = \min_{u \in H^1_{0, {\rm rad}}(B), \, |  u - u_0 |_{q+1, \alpha} \leq \frac{1}{n}}\Phi_{j_{n}}(u).
\]
In particular,
\begin{equation}\label{desigualdades epsilon}
 \Phi_{j_{n}}(u_{n}) \leq  \Phi_{j_{n}}(v_{n})  < \Phi_{{\rm rad}}(u_0) = \Phi_{j_{n}} (u_0),
\end{equation}
and there exists $\mu_{n} \in \R$ such that
\begin{equation}\label{solucao uepsilon}
-\Delta u_{n} = f_{j_{n}}(x,u_{n}) + \mu_{n}|x|^{\alpha}|u_{n} - u_0|^{q -1}(u_{n} - u_0) \ \text{in} \ B, \ u_{n} = 0 \ \text{on} \ \partial B.
\end{equation}
Observe that $\mu_{n}$ is the Lagrange multiplier associated to the minimization of $\Phi_{j_{n}}$ on
\[
C_{n} = \left\{ u \in H^1_{0, {\rm rad}}(B); |u -u_0|_{q+1, \alpha}^{q+1} \leq n^{-(q+1)} \right\}.
\]
Indeed, observe that $|u_n - u_0|_{q+1, \alpha} = \frac{1}{n}$ or $u_n$ is a local minimum for $\Phi_{j_n}: H^1_{0, {\rm rad}}(B) \rt \R$.

 Now,  since $u_{n} + t (u_0 - u_{n}) \in C_ {n}$ for every $t \in [0,1]$, we obtain
\[
0 \leq \Phi_{j_{n}}'(u_{n})(u_0 - u_{n}) = - \mu_{n} \int_B |x|^{\alpha} |u_{n} - u_ 0|^{q+1} dx
\]
and, therefore, $\mu_{n} \leq 0$. In addition, arguing as in Step 1, for every $n \in \N$, it follows that $u_n$ strongly solves \eqref{solucao uepsilon} and $u_{n} \in C^{1, \gamma}(\overline{B})$ for every $0 < \gamma < 1$.

We prove below that
\begin{equation}\label{boa convergencia}
u_{n} \rt u_{0} \quad \text{in} \quad C^1_{0,{\rm rad}}(\overline{B}) \quad \text{as}  \quad n \rt \infty.
\end{equation}

For this moment assume \eqref{boa convergencia}, which in particular means that $(u_{n})$ is bounded in $L^{\infty}(B)$. Then, from \eqref{desigualdades epsilon}, for every $n$ sufficiently large
\[
\Phi(u_n) = \Phi_{j_n}(u_n) < \Phi(u_0)
\]
which contradicts \eqref{minimum C1 introduction}.

Now we prove \eqref{boa convergencia}. We distinguish two cases according to the behavior of $(\mu_{n})$ as $n \rt \infty$, namely,
\begin{enumerate}[(i)]
\item $(\mu_n)$ is bounded;
\item there exists a subsequence of $(\mu_n)$ that converges to $-\infty$.
\end{enumerate}

\nin Case (i). In this case, from \eqref{solucao uepsilon}, $u_n$ strongly solves
\begin{equation}\label{nova eq un}
-\Delta u_n = g_n(x, u_n) \quad \text{in} \quad B, \quad u_n =0 \quad \partial B,
\end{equation}
and also in the sense of $H^1_0(B)$. Here $g_n(x, u_n) = f_{j_{n}}(x,u_{n}) + \mu_{n}|x|^{\alpha}|u_{n} - u_0|^{q-1}(u_{n} - u_0)$. In particular, $(u_n)$ is bounded in $H^1_0(B)$.

Again we split the proof into two cases: $N=1,2$ and $N\geq 3$.

In case $N=1,2$, $H^1_0(B) \hookrightarrow L^t(B)$ continuously for every $t \geq 1$. Then, since $(u_n)$ is bounded in $H^1_0(B)$, for each $t \geq 1$, there exists $\widetilde{C}_t> 0$ such that $|g_n(x,u_n)|_{t} \leq C_t$. Therefore, by the classical elliptic regularity as in \cite[Lemma 9.17]{gilbarg-trudinger}, for each $t\geq 1$ there exists $C_t> 0$ such that $\| u_n \|_{W^{2,t}} \leq C_t$ for all $n \in \N$. Therefore, \eqref{boa convergencia} follows from the classical Sobolev embeddings.

In case $N \geq 3$, since $H^1_{0, {\rm rad}}(B) \hookrightarrow L^{2^*_{\alpha}}(B, |x|^{\alpha})$ continuously and $(u_n)$ is bounded in $H^1_{0, {\rm rad}}(B)$, there exists $a(x) \in L^{\frac{N}{2}}(B)$, see \eqref{legal des regularidade}, such that
\begin{multline*}
| g_n(x,u_n(x))| = |f_{j_{n}}(x,u_{n}) + \mu_{n}|x|^{\alpha}|u_{n} - u_0|^{2^*_{\alpha} -2}(u_{n} - u_0)| \\\leq a(x)(1 + |u_n(x)|), \quad \forall \, n \in \N, \quad \forall \, x \in B.
\end{multline*}
Then, arguing as in \cite[Proposition 1.2]{gueda-veron} and by \cite[Lemma 9.17]{gilbarg-trudinger}, for each $t \geq 1$ there exist $C_t > 0$ such that $\| u_n \|_{W^{2,t}} \leq C_t$ for all $n \in \N$. Therefore, once more, \eqref{boa convergencia} follows from the classical Sobolev embeddings.

\medbreak
\nin Case (ii). To simplify notation, we also denote by $(\mu_n)$ the subsequence of $(\mu_n)$ that converges to $-\infty$. Observe that \eqref{crescimento f regularidade introduction} and the fact that $u_0$ is bounded, guarantees the existence $n_0 \in \N$ and $M>0$, that do not depend on $n$, such that
\[
g_n(x, s) = f_{j_{n}}(x,s) + \mu_{n}|x|^{\alpha}|s- u_0|^{q-1}(s - u_0) \left\{ \begin{array}{l} < 0 \quad \text{if} \quad s \geq M, \\ > 0 \quad \text{if} \quad s \leq -M,\end{array}\right.,
\]
forall $x \in B\menos\{0\}$ and all $n \geq n_0$. Set $u^+ = \max\{ u,0\}$ and $u^- = \max\{ -u,0\}$. Then $(u_n - M)^+, (u_n+M)^- \in H^1_0(B)$ and taking $(u_n - M)^+, (u_n+M)^-$ as test functions in \eqref{nova eq un} we obtain that $|u_n(x)| \leq M$ for every $n \geq n_0$ and for every $x \in \overline{B}$.

Let $s> 1$. Then, from \eqref{general equation introduction} and \eqref{nova eq un}, with $ |u_{n} - u_0|^{s-1}(u_n -u_0)$ as a test function we obtain
\begin{multline*}
0 \leq s \int_B |u_n - u_0|^{s-1}| \nabla u_n - u_0|^2 dx \\= \int_B \left( f_{j_n}(x, u_n) - f(x, u_0) \right) |u_n - u_0|^{s-1}(u_n - u_0) dx + \mu_n \int_B |x|^{\alpha}|u_n - u_0|^{q + s} dx.
\end{multline*}
Now, since $(u_n)$ remains bounded in $L^{\infty}$, from Hölder's inequality with $ p  = \frac{q + s}{s}$ applied to the above inequality we obtain
\begin{multline*}
- \mu_n \int_B |u_n - u_0|^{q + s} |x|^{\alpha} dx  \leq C \int_B |u_n - u_0|^s |x|^\alpha dx \\\leq C \left( \int_B |x|^{\alpha}dx \right)^{\frac{p-1}{p}} \left(\int_B |u_n - u_0|^{q + s}|x|^{\alpha} dx\right)^{\frac{1}{p}}
\end{multline*}
and therefore
\[
(- \mu_n)^{\frac{1}{q}} \left(  \int_B |u_n - u_0|^{q + s}|x|^{\alpha}dx \right)^{\frac{1}{q+ s }} \leq C \left( \int_B |x|^{\alpha}dx \right)^{\frac{1}{q+ s}}.
\]
Finally, taking the limit as $s \rt \infty$ in the above inequality we obtain that $(-\mu_n||u_n - u_0|^{q}|_{\infty})$ is bounded and hence we argue as in Case (i).
\end{proof}

\begin{remark}
[\textbf{$H^1$ versus $C^1$ local minimizers: partially symmetric functions}] The proof Theorem {\rm \ref{HXC partial introduction}} follows similarly as the proof of Theorem {\rm \ref{teo HXC radial introduction}}, basically replacing Theorem {\rm \ref{teo regularidade radial}} by Theorem {\rm \ref{teo regularidade l}}, hence omitted here.
\end{remark}

\section{Existence results and local minimum solutions}\label{section existencia}

In this part we start our study on the boundary value problem \eqref{problem weight introduction}. Set
\[
\gd{ \lambda_{1, \alpha} = \inf_{u \in H^1_0(B)\menos\{0\}} \frac{\int_B |\nabla u|^2 dx}{\int_B |x|^{\alpha}| u|^2 dx}. }
\]
Then $\lambda_{1,\alpha}$ is the first eigenvalue for the weighted eigenvalue problem
\[
-\Delta u = \lambda |x|^{\alpha}u \quad \text{in} \quad B, \quad \text{with} \quad u = 0 \quad \text{on} \quad \partial B,
\]
which is simple and has a positive eigenfunction $\varphi_{1,\alpha}$. Consequently, $\varphi_{1,\alpha}$ is radially symmetric and strictly radially decreasing. Here we consider $\varphi_{1,\alpha}$ with $\int |x|^{\alpha} \varphi_{1,\alpha}^2 dx =1$.




\begin{lemma}
If  $\lambda \geq \frac{\lambda_{1,\alpha}}{p}$, then \eqref{problem weight introduction} has no solution.
 \end{lemma}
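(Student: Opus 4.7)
The plan is to test the equation against the first weighted eigenfunction $\varphi_{1,\alpha}$ and exploit the strict convexity of $t \mapsto (1+t)^p$ on $[0,\infty)$ (which gives the tangent-line inequality at $t=0$) to derive a contradiction.

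More precisely, I would argue by contradiction. Suppose $w$ is a (classical) solution of \eqref{problem weight introduction} with $\lambda \geq \lambda_{1,\alpha}/p$. Multiply the equation
\[
-\Delta w = \lambda |x|^\alpha (w+1)^p
\]
by $\varphi_{1,\alpha} > 0$ and integrate over $B$. Since $w = 0 = \varphi_{1,\alpha}$ on $\partial B$, two applications of Green's formula together with $-\Delta \varphi_{1,\alpha} = \lambda_{1,\alpha} |x|^\alpha \varphi_{1,\alpha}$ give
\[
\lambda_{1,\alpha} \int_B |x|^\alpha w \,\varphi_{1,\alpha} \, dx \;=\; \lambda \int_B |x|^\alpha (w+1)^p \varphi_{1,\alpha} \, dx.
\]

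Next I would apply the elementary convexity inequality $(1+t)^p \geq 1 + p t$ for all $t \geq 0$ (valid since $p>1$; this is the tangent-line bound at $t=0$ for the convex function $t\mapsto(1+t)^p$). With $t = w(x) > 0$ this yields
\[
\lambda \int_B |x|^\alpha (w+1)^p \varphi_{1,\alpha} \, dx \;\geq\; \lambda \int_B |x|^\alpha \varphi_{1,\alpha} \, dx \;+\; \lambda p \int_B |x|^\alpha w \,\varphi_{1,\alpha} \, dx.
\]
Combining the two displays gives
\[
(\lambda_{1,\alpha} - \lambda p) \int_B |x|^\alpha w \,\varphi_{1,\alpha} \, dx \;\geq\; \lambda \int_B |x|^\alpha \varphi_{1,\alpha} \, dx \;>\; 0.
\]
Under the hypothesis $\lambda \geq \lambda_{1,\alpha}/p$ the left-hand side is non-positive (since $w,\varphi_{1,\alpha} > 0$), while the right-hand side is strictly positive, a contradiction.

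I do not expect any serious obstacle. The only minor points to check are that all integrals make sense and that the integration by parts is justified; for a classical positive solution $w \in C^2(B) \cap C(\overline B)$ (as produced in Theorem \ref{teorema existencia introduction}) this is standard because $\varphi_{1,\alpha} \in H^1_0(B)$ and $|x|^\alpha (w+1)^p \varphi_{1,\alpha} \in L^1(B)$ since $w$ is bounded. (If one wanted to argue with merely $H^1$ weak solutions, one would just take $\varphi_{1,\alpha}$ as a test function directly, avoiding the second integration by parts.)
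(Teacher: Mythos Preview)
Your proof is correct and follows essentially the same approach as the paper: test the equation against the first weighted eigenfunction $\varphi_{1,\alpha}$ and use the convexity bound $(1+t)^p > pt$ (you retain the extra $+1$, the paper drops it) to reach a contradiction.
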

\begin{proof}
If $w$ is a solution of \eqref{problem weight introduction}, then
 \[
 \lambda_{1, \alpha} \int_B |x|^{\alpha} \varphi_1 w \, dx = \int_B \nabla \varphi_1 \nabla w \, dx = \lambda \int_B | x |^{\alpha} (w +1)^p\varphi_1 \, dx > \lambda p \int_B |x|^\alpha \varphi_1 w \, dx,
 \]
 which implies $\lambda p < \lambda_{1,\alpha}$.
\end{proof}

Let $e_{\alpha}$ and $e_0$ be the solutions of
\[
-\Delta e_{\alpha} = |x|^{\alpha}, \ \ -\Delta e_0 =1 \quad \text{in} \quad B, \quad e_{\alpha} = e_{0} = 0 \quad \text{on} \quad \partial B.
\]
Then, by the strong maximum principle, $0 < e_{\alpha} < e_0$ in $B$ for all $\alpha > 0$.
\begin{lemma}\label{lemmasubsuper}
 If $0 < \lambda \leq \frac{(p-1)^{p-1}}{p^p | e_{\alpha}|_{\infty}}$, then \eqref{problem weight introduction} has a solution.
\end{lemma}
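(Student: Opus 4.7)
The plan is to exhibit an ordered pair of sub- and super-solutions for \eqref{problem weight introduction} and invoke the standard monotone iteration. I would take $\underline{w} \equiv 0$ as subsolution, which is admissible since $-\Delta \underline{w} = 0 \leq \lambda |x|^{\alpha}(\underline{w}+1)^p$, and seek the supersolution in the explicit radial form $\overline{w} = C e_{\alpha}$ with $C > 0$ to be chosen.

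Because $-\Delta(Ce_{\alpha}) = C|x|^{\alpha}$, the supersolution inequality reduces pointwise to $C \geq \lambda(Ce_{\alpha}+1)^p$, and since $0 \leq e_{\alpha} \leq M := |e_{\alpha}|_{\infty}$, it is enough to verify $C \geq \lambda(CM+1)^p$. Setting $t = CM$, a one-variable minimization of $t \mapsto (1+t)^p/t$ on $(0,\infty)$ gives the critical point $t_* = 1/(p-1)$ with minimum value $p^p/(p-1)^{p-1}$. Therefore the supersolution inequality is solvable in $C$ precisely when
\[
\lambda \, M \, \frac{p^p}{(p-1)^{p-1}} \leq 1,
\]
which is exactly the hypothesis, and a valid choice is $C = 1/((p-1)M)$, giving $\overline{w} = e_{\alpha}/((p-1)|e_{\alpha}|_{\infty}) > 0$ in $B$.

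With the ordered pair $0 = \underline{w} \leq \overline{w}$ in hand, and with $u \mapsto \lambda|x|^{\alpha}(u+1)^p$ nondecreasing in $u \geq 0$, the classical monotone iteration scheme, starting from $\overline{w}$ and solving $-\Delta w_{n+1} = \lambda|x|^{\alpha}(w_n+1)^p$ in $B$ with $w_{n+1} = 0$ on $\partial B$, produces a decreasing sequence converging to a weak radial solution $w \in H^1_0(B)$ with $0 \leq w \leq \overline{w}$. Since $\overline{w} \in L^{\infty}(B)$, so is $w$, and Theorem \ref{teo regularidade radial} together with Remark \ref{classical regularity radial} (Schauder estimates applied to the regular nonlinearity) upgrades $w$ to a classical $C^{2,\gamma}(\overline{B})$ solution. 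Finally, $-\Delta w = \lambda|x|^{\alpha}(w+1)^p \geq 0$ is not identically zero since $\lambda > 0$, so the strong maximum principle yields $w > 0$ in $B$.

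The only delicate step is the optimization that matches the sharpness of the threshold $(p-1)^{p-1}/(p^p|e_{\alpha}|_{\infty})$; everything else is a routine application of the sub-/super-solution method together with the regularity theorem from the previous section.
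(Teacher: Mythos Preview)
Your proof is correct and follows essentially the same approach as the paper: the same supersolution $\overline{w}=Ce_\alpha$ with $C=1/((p-1)|e_\alpha|_\infty)$ obtained via the identical one-variable optimization, followed by monotone iteration. The only minor variation is that the paper takes $\underline{w}=\lambda e_\alpha$ as subsolution (which makes positivity immediate), whereas you take $\underline{w}=0$ and recover strict positivity afterwards via the strong maximum principle; both choices are perfectly valid.
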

\begin{proof}
First observe that, for every $\lambda>0$, $\underline{w} = \lambda e_{\alpha}$ is a lower solution of \eqref{problem weight introduction}. Now we search for an upper solution $\overline{w}$ of the form $\overline{w} = M e_{\alpha}$. Then $\overline{w} = M e_{\alpha}$ is an upper solution for \eqref{problem weight introduction} if, and only if,
 \begin{equation}\label{estimativa1}
 \frac{(1 + M |e_{\alpha}|_{\infty})^p}{M} \leq \frac{1}{\lambda}.
 \end{equation}
 Let
 \[
 h(t) = \frac{(1+ t | e_{\alpha}|_{\infty})^p}{t}, \quad t > 0.
 \]
 Observe that $h$ attains its minimum value $\frac{p^p | e_{\alpha}|_{\infty}}{(p-1)^{p-1}}$ at $t_* = \frac{1}{(p-1)| e_{\alpha}|_{\infty}}$. Therefore, if $0 < \lambda \leq \frac{(p-1)^{p-1}}{p^p | e_{\alpha}|_{\infty}}$, then $\overline{w} = M e_{\alpha}$ with $M = t_*$ is an upper solution for \eqref{problem weight introduction} such that $\underline{w} \leq \overline{w}$.  Then we can apply the monotone iteration technique, as in \cite{amann}, to prove the existence of a solution to \eqref{problem weight introduction}.
\end{proof}

\begin{lemma}\label{upper zero}
There exists $\lambda_0 = \lambda_0(N,p) \in (0,1)$ such that for every $\lambda \in (0, \lambda_0)$, every integer $k \geq 2$, then $\overline{w} = \lambda^{1/k} e_{\alpha}$ is an upper solution for \eqref{problem weight introduction} such that $\underline{w} = \lambda e_{\alpha} < \lambda^{1/k} e_{\alpha} = \overline{w}$ in $B$.
\end{lemma}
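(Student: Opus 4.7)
The plan is to reuse the computation from Lemma \ref{lemmasubsuper}. Recall from there that for any constant $M > 0$, the function $Me_\alpha$ is a (classical) upper solution of \eqref{problem weight introduction} if and only if
\[
\lambda\bigl(1 + M|e_\alpha|_\infty\bigr)^p \leq M.
\]
Setting $M = \lambda^{1/k}$, this condition rewrites as
\[
\lambda^{1 - 1/k}\bigl(1 + \lambda^{1/k}|e_\alpha|_\infty\bigr)^p \leq 1,
\]
and the whole task reduces to choosing $\lambda_0$ so that this inequality holds uniformly in $k \geq 2$ whenever $\lambda \in (0,\lambda_0)$.

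First I would eliminate the $\alpha$-dependence: by the strong maximum principle $0 < e_\alpha < e_0$ in $B$ for every $\alpha > 0$, so $|e_\alpha|_\infty \leq c_N := |e_0|_\infty$, a constant depending only on $N$. Next I would observe that for $\lambda \in (0,1)$ and $k \geq 2$ the exponent $1 - 1/k$ is an increasing function of $k$ with values in $[1/2, 1)$, so $\ln \lambda^{1 - 1/k} = (1 - 1/k)\ln \lambda$ is decreasing in $k$ (since $\ln \lambda < 0$), giving the uniform bound
\[
\lambda^{1 - 1/k} \leq \lambda^{1/2}, \qquad \lambda^{1/k} \leq 1, \qquad \forall\, k \geq 2.
\]
Plugging these into the inequality above yields the sufficient condition
\[
\lambda^{1/2}(1 + c_N)^p \leq 1,
\]
so taking $\lambda_0 = \min\bigl\{1,\, (1+c_N)^{-2p}\bigr\} = \lambda_0(N,p) \in (0,1)$ makes $\overline{w} = \lambda^{1/k} e_\alpha$ an upper solution for every $\lambda \in (0,\lambda_0)$ and every integer $k \geq 2$.

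Finally, for the ordering: since $\lambda \in (0,1)$ and $k \geq 2$ give $\lambda^{1/k - 1} > 1$, we have $\lambda < \lambda^{1/k}$, and combined with $e_\alpha > 0$ in $B$ (strong maximum principle) this yields $\underline{w} = \lambda e_\alpha < \lambda^{1/k} e_\alpha = \overline{w}$ pointwise in $B$. There is no real obstacle here; the only mild subtlety is to make the argument uniform in $k$, which is handled once one notices that $\lambda^{1-1/k}$ is monotone in $k$ for $\lambda \in (0,1)$ and thus maximized at $k = 2$.
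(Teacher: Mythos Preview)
Your proof is correct and follows exactly the approach the paper indicates: the paper's own proof is a two-line remark that the result is ``a straightforward consequence of \eqref{estimativa1}'' together with the bound $|e_\alpha|_\infty \leq |e_0|_\infty$, and you have simply made those details explicit, including the uniformity in $k$ via the monotonicity of $\lambda^{1-1/k}$.
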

\begin{proof}
 It is a straightforward consequence of \eqref{estimativa1}. Observe that $| e_{\alpha}|_{\infty} \leq | e_{0}|_{\infty}$ for every $\alpha\geq 0$ and this allows us to get $ \lambda_0(N,p)$ independent of $\alpha$.
\end{proof}

We say that a solution $w_0$ of
\begin{equation}\label{problemagrealauxiliar}
-\Delta w = f(x,w) \quad \text{in} \quad \Omega, \quad w > 0 \quad \text{in} \quad \Omega, \quad w = 0 \quad \text{on} \quad \partial \Omega,
\end{equation}
is a minimal solution if
\[
w_{0}(x) \leq w(x) \quad \forall \ x \in B
\]
for any other solution $w$ of \eqref{problemagrealauxiliar}.

Let
\[
\lambda_* = \sup \{ \lambda> 0; \eqref{problem weight introduction} \,\, \text{has a solution}  \}.
\]

To treat \eqref{problem weight introduction} we will also consider an equivalent problem, namely, if we write $v = aw$, then \eqref{problem weight introduction} is equivalent to
\begin{equation}\label{equiv1}
 -\Delta v = |x|^{\alpha}(v +a)^{p} \ \text{in} \ B, \ v > 0 \ \text{in} \ B, \ v = 0 \ \text{on} \ \partial B, \ \text{with} \ a^{p-1} = \lambda.
\end{equation}

\begin{proposition}\label{existence open interval}Let $\lambda_*$ be as above. Then $0 < \lambda_* < \infty$ and:
\begin{enumerate}[(i)]
 \item There is no solution of \eqref{problem weight introduction} if $\lambda> \lambda_*$;
 \item If $0 < \lambda < \lambda_*$, then \eqref{problem weight introduction} has a minimal solution $w_{\alpha,\lambda}$. Let $v_{\alpha, \lambda}$ and $w_{\alpha, \lambda}$ be the corresponding minimal solutions of \eqref{equiv1} and \eqref{problem weight introduction}, respectvely. Then $w_{\alpha, \lambda} \rt 0$ uniformly w.r.t. $x$ and $\alpha$ as $\lambda \rt 0$.
 \item $v_{\alpha, \lambda}, w_{\alpha, \lambda}$ are positive, radially symmetric and radially decreasing.
\end{enumerate}
\end{proposition}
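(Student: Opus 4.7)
The proof splits naturally into four parts. First, $0 < \lambda_* < \infty$ follows immediately from the two preceding lemmas: the nonexistence lemma gives $\lambda_* \leq \lambda_{1,\alpha}/p < \infty$, while Lemma \ref{lemmasubsuper} yields $\lambda_* \geq (p-1)^{p-1}/(p^p|e_\alpha|_\infty) > 0$. The glue between these bounds and item (i) is monotonicity in $\lambda$: if $0 < \lambda_1 < \lambda_2$ and \eqref{problem weight introduction} admits a solution $w_{\lambda_2}$ at parameter $\lambda_2$, then $w_{\lambda_2}$ is a super-solution for the problem at $\lambda_1$ because $-\Delta w_{\lambda_2} = \lambda_2 |x|^\alpha (w_{\lambda_2}+1)^p \geq \lambda_1 |x|^\alpha (w_{\lambda_2}+1)^p$, and $\underline w \equiv 0$ is a sub-solution, so the monotone iteration of \cite{amann} produces a solution at $\lambda_1$. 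Item (i) then follows by contraposition: a solution at some $\lambda > \lambda_*$ would force the existence of solutions for all $\lambda' \in (0, \lambda]$, contradicting the definition of the supremum.

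For (ii), fix $\lambda \in (0, \lambda_*)$, pick $\lambda' \in (\lambda, \lambda_*)$ with an associated solution $w_{\lambda'}$, and run the Picard iteration
\[
w_0 \equiv 0, \quad -\Delta w_{n+1} = \lambda |x|^\alpha (w_n+1)^p \ \text{in} \ B, \quad w_{n+1}|_{\partial B} = 0.
\]
By induction and the weak maximum principle, $0 = w_0 \leq w_1 \leq \dots \leq w_n \leq w_{\lambda'}$, so $(w_n)$ converges pointwise and, by standard elliptic regularity, in stronger norms, to a solution $w_{\alpha, \lambda}$ of \eqref{problem weight introduction}. Minimality holds because the very same iteration, repeated with any other solution $w$ in place of $w_{\lambda'}$, again stays below $w$, so passing to the limit gives $w_{\alpha, \lambda} \leq w$. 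For the uniform decay as $\lambda \to 0^+$, invoke Lemma \ref{upper zero}: for $\lambda < \lambda_0(N,p)$ and any integer $k \geq 2$, $\lambda^{1/k} e_\alpha$ is a super-solution, hence by comparison
\[
0 \leq w_{\alpha, \lambda} \leq \lambda^{1/k} e_\alpha \leq \lambda^{1/k} |e_0|_\infty,
\]
a bound independent of $\alpha$ and $x$ that vanishes as $\lambda \to 0^+$.

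For (iii), positivity of $w_{\alpha, \lambda}$ is immediate from the strong maximum principle applied to $-\Delta w_{\alpha, \lambda} = \lambda |x|^\alpha (w_{\alpha, \lambda}+1)^p \geq 0$ (not vanishing identically). Radial symmetry propagates through the iteration: each iterate $w_n$ is radial because its right-hand side is radial and the Dirichlet Laplacian on $B$ with radial data has a unique (radial) solution; radiality passes to the limit. Once radiality is in hand, the ODE
\[
-(r^{N-1} w_{\alpha, \lambda}')' = \lambda r^{N-1+\alpha}(w_{\alpha, \lambda}+1)^p
\]
integrates to $w_{\alpha, \lambda}'(r) = -r^{1-N}\int_0^r \lambda s^{N-1+\alpha}(w_{\alpha, \lambda}(s)+1)^p\, ds < 0$ for $r \in (0,1]$, giving strict radial decrease. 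The corresponding statements for $v_{\alpha, \lambda}$ follow from the change of variable $v = a w$ with $a^{p-1} = \lambda$.

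The most delicate point in this plan is the minimality argument: one must verify that the Picard iteration from $0$ lies below \emph{every} solution of \eqref{problem weight introduction}, not merely below the particular $w_{\lambda'}$ used to ensure convergence. This is routine but relies on the monotonicity of $s \mapsto (s+1)^p$ for $s \geq 0$ together with a weak maximum principle comparison at each step of the induction; in particular, positivity of each candidate solution (already built into the problem statement) is essential for the inductive step to be valid.
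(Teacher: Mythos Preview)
Your proof is correct and follows essentially the same approach as the paper: monotone iteration (as in \cite{amann}) for the existence and minimality of $w_{\alpha,\lambda}$, Lemma~\ref{upper zero} for the uniform decay, and the evident monotonicity in $\lambda$ for item~(i). The only cosmetic difference is that the paper starts the iteration from the sub-solution $\underline{w}=\lambda e_\alpha$ rather than from $0$, but since the first Picard iterate from $0$ is exactly $\lambda e_\alpha$, the two schemes coincide after one step; your treatment is simply more explicit where the paper writes ``quite evident.''
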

\begin{proof}
Regarding the existence on minimal solution, we apply the monotone iteration technique, as in \cite{amann}. Indeed the minimal solution $w_{\alpha, \lambda}$ is obtained by the lower and upper solution method with a monotone iteration starting from the lower solution $\underline{w} = \lambda e_{\alpha}$ and  therefore it is radially symmetric and radially decreasing.

Fix an integer $k \geq 2$. Then, by Lemma {\rm \ref{upper zero}} and the strong maximum principle, we conclude that
\begin{equation}\label{asymptotics}
w_{\alpha, \lambda} (x) < \lambda^{1/k} e_{\alpha} (x) < \lambda^{1/k} \| e_0 \|_{\infty} \quad \forall \, x \in B \ \ \text{and} \ \ \lambda \in (0, \lambda_0(N,p)).
\end{equation}

Items (i) and (iii) are quite evident. 
\end{proof}

\begin{proposition}\label{sol minimais} Let $v_{\alpha, \lambda}$ and $w_{\alpha, \lambda}$ be the corresponding minimal solutions of \eqref{equiv1} and \eqref{problem weight introduction}, respectvely. Then
 \[
 \lim_{\lambda \rt 0 } \frac{w_{\alpha, \lambda}}{\lambda e_{\alpha}} = 1, \quad  \lim_{\lambda \rt 0 } \frac{v_{\alpha, \lambda}}{\lambda^{p/(p-1)} e_{\alpha}} = 1, \quad \text{uniformly w.r.t.} \:\: x \:\: \text{and} \:\: \alpha.
 \]
\end{proposition}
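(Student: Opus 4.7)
The plan is to exploit the uniform smallness of $w_{\alpha,\lambda}$ furnished by \eqref{asymptotics} to linearize the nonlinearity and compare the residual with $e_\alpha$ via the maximum principle. Concretely, set
\[
r_{\alpha,\lambda}(x) := w_{\alpha,\lambda}(x) - \lambda e_\alpha(x).
\]
Since $-\Delta w_{\alpha,\lambda} = \lambda |x|^\alpha (w_{\alpha,\lambda}+1)^p$ and $-\Delta e_\alpha = |x|^\alpha$, we get
\[
-\Delta r_{\alpha,\lambda} = \lambda |x|^\alpha \bigl[ (w_{\alpha,\lambda}+1)^p - 1 \bigr] \ \text{in } B, \quad r_{\alpha,\lambda} = 0 \ \text{on } \partial B,
\]
and the right-hand side is nonnegative, so $r_{\alpha,\lambda} \geq 0$ by the maximum principle, i.e., $w_{\alpha,\lambda} \geq \lambda e_\alpha$ (which we already know since $\underline{w}=\lambda e_\alpha$ is a lower solution and $w_{\alpha,\lambda}$ is the minimal one).

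The key step is the matching upper bound. Pick any integer $k \geq 2$; by Lemma~\ref{upper zero} and the comparison in \eqref{asymptotics},
\[
0 \leq w_{\alpha,\lambda}(x) \leq \lambda^{1/k} \|e_0\|_\infty \quad \forall \, x \in B, \ \ \forall \lambda \in (0,\lambda_0(N,p)),
\]
uniformly in $\alpha$. Since $p>1$, a first-order Taylor expansion at $0$ gives a constant $C_p>0$ such that $(1+s)^p - 1 \leq C_p s$ whenever $0 \leq s \leq 1/2$. Hence for $\lambda$ small enough (independently of $\alpha$),
\[
(w_{\alpha,\lambda}+1)^p - 1 \leq C_p \, w_{\alpha,\lambda} \leq C_p\, \lambda^{1/k}\|e_0\|_\infty,
\]
so that
\[
-\Delta r_{\alpha,\lambda} \leq C_p \lambda^{1+1/k}\|e_0\|_\infty\, |x|^\alpha = C_p \lambda^{1+1/k}\|e_0\|_\infty\, (-\Delta e_\alpha).
\]
Comparing via the maximum principle in $B$ (both sides vanish on $\partial B$) yields
\[
0 \leq r_{\alpha,\lambda}(x) \leq C_p \lambda^{1+1/k}\|e_0\|_\infty\, e_\alpha(x) \quad \forall \, x \in B.
\]
Dividing by $\lambda e_\alpha(x)$ (which is strictly positive inside $B$) gives
\[
0 \leq \frac{w_{\alpha,\lambda}(x)}{\lambda e_\alpha(x)} - 1 \leq C_p \lambda^{1/k}\|e_0\|_\infty,
\]
and the right-hand side tends to zero as $\lambda \to 0^+$, uniformly in $x \in B$ and in $\alpha \geq 0$. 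This proves the first limit.

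For the second limit, recall from \eqref{equiv1} that $v_{\alpha,\lambda} = a \, w_{\alpha,\lambda}$ with $a = \lambda^{1/(p-1)}$; hence
\[
\frac{v_{\alpha,\lambda}}{\lambda^{p/(p-1)} e_\alpha} = \frac{\lambda^{1/(p-1)} w_{\alpha,\lambda}}{\lambda^{p/(p-1)} e_\alpha} = \frac{w_{\alpha,\lambda}}{\lambda e_\alpha} \longrightarrow 1,
\]
uniformly in $x$ and $\alpha$, by what we just proved. The main (mild) obstacle is only ensuring that the uniformity in $\alpha$ carries through the Taylor estimate and the maximum-principle comparison; this is handled precisely because the bound in \eqref{asymptotics} is through $\|e_0\|_\infty$, which is independent of $\alpha$, and because $e_\alpha$ itself absorbs the weight $|x|^\alpha$ on the right-hand side.
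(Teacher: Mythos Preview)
Your proof is correct and follows essentially the same strategy as the paper's: use the uniform $L^\infty$-smallness of $w_{\alpha,\lambda}$ from \eqref{asymptotics} to control the nonlinearity, then compare with $e_\alpha$ via the maximum principle to sandwich $w_{\alpha,\lambda}/(\lambda e_\alpha)$ near $1$, and deduce the second limit from $v_{\alpha,\lambda}=a\,w_{\alpha,\lambda}$. The only cosmetic difference is that the paper bounds $(1+w_{\alpha,\lambda})^p\le(1+\epsilon)^p$ directly (given $|w_{\alpha,\lambda}|_\infty<\epsilon$) to obtain $\lambda e_\alpha< w_{\alpha,\lambda}<(1+\epsilon)^p\lambda e_\alpha$, whereas you linearize via a first-order Taylor estimate on $(1+s)^p-1$; your version has the minor advantage of yielding an explicit rate $O(\lambda^{1/k})$.
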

\begin{proof}
By the strong maximum principle we have  that $\lambda e_{\alpha} < w_{\alpha, \lambda}$ in $B$. On the other hand, given $\epsilon >0$, we have by \eqref{asymptotics} that there exists $\lambda_{\epsilon}> 0$ such that
 \[
 |w_{\alpha, \lambda}|_{\infty} < \epsilon, \quad \forall \; 0 < \lambda < \lambda_{\epsilon}, \,\, \forall \, \alpha >0.
 \]
 Then
 \[
 - \Delta w_{\alpha, \lambda} = \lambda|x|^{\alpha}(1 + w_{\alpha, \lambda})^p < \lambda | x|^{\alpha} (1 + \epsilon)^p = - (1 + \epsilon)^p \Delta (\lambda e_{\alpha}),
 \]
and, by strong maximum principle, we obtain
 \[
 \lambda e_{\alpha} < w_{\alpha, \lambda} < (1+ \epsilon)^p \lambda e_{\alpha}, \quad \forall \, 0 < \lambda < \lambda_{\epsilon},  \, \forall \, \alpha>0.
 \]

For the second limit observe that $v_{\alpha, \lambda} = a w_{\alpha, \lambda}$ with $a^{p-1} = \lambda$. 
\end{proof}

\subsection{On radial local minimum solutions and the proof Theorem \ref{teorema existencia introduction}}
Let $1 < p$ and $0< \lambda < \lambda_*$. In case $N\geq 3$ also assume $p \leq 2^*_{\alpha} -1$. Then, see Remark \ref{classical regularity radial}, we know that the (positive) critical points of the $C^1(H^1_{0, {\rm rad}}(B), \R)$-functional
\begin{equation}\label{funcional radial}
J_{\lambda,{\rm rad}}(v) = \frac{1}{2}\int_B |\nabla v |^2 dx - \frac{1}{p+1} \int_B |x|^{\alpha} | v + a|^{p+1} dx,  \ a^{p-1} = \lambda,
\end{equation}
are precisely the classical radial solutions of \eqref{equiv1}.

\begin{proposition}\label{prop energia negativa}
Let $1< p$ and in case $N\geq 3$ also assume $p \leq 2^*_{\alpha} -1$. If $0< \lambda < \lambda_*$, then there exists $\widetilde{v}_{\alpha,{\rm rad}, \lambda} >0$ in $B$ such that $J_{\lambda, {\rm rad}}$ has a local minimum at $\widetilde{v}_{\alpha,{\rm rad}, \lambda}$, $J_{\lambda, {\rm rad}}(\widetilde{v}_{\alpha,{\rm rad}, \lambda})< 0$ and
\begin{equation}\label{barra comportamento}
 \lim_{\lambda \rt 0 } \frac{\widetilde{v}_{\alpha,{\rm rad}, \lambda}}{\lambda^{p/(p-1)} e_{\alpha}} = 1, \quad \text{uniformly w.r.t.} \:\: x \:\: \text{and} \:\: \alpha.
 \end{equation}
\end{proposition}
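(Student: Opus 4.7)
The plan is to construct $\widetilde v_{\alpha,{\rm rad},\lambda}$ as a global minimizer of a suitable truncated functional, verify that on a small $C^1_{{\rm rad}}$-neighborhood the truncation agrees with $J_{\lambda,{\rm rad}}$ up to an additive constant, and then invoke Theorem \ref{teo HXC radial introduction} to upgrade the resulting $C^1_{{\rm rad}}$-local minimum to an $H^1_{{\rm rad}}$-local minimum. First I would fix $\lambda \in (0,\lambda_*)$, choose $\mu = \mu(\lambda) \in (\lambda,\lambda_*)$ (eventually with $\mu(\lambda)/\lambda \rt 1$ as $\lambda \rt 0^+$), and let $\overline v := v_{\alpha,\mu}$ denote the minimal radial solution of \eqref{equiv1} at parameter $\mu$ furnished by Proposition \ref{existence open interval}. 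Writing $a = \lambda^{1/(p-1)}$ and $a_\mu = \mu^{1/(p-1)} > a$, the identity $-\Delta \overline v = |x|^{\alpha}(\overline v + a_\mu)^p > |x|^{\alpha}(\overline v + a)^p$ makes $\overline v$ a strict supersolution of \eqref{equiv1} at parameter $\lambda$. I would then truncate $g(x,v) = |x|^{\alpha}(v+a)^p$ to
\[
\widetilde g(x,v) = g\bigl(x, \min\{\max\{v,0\}, \overline v(x)\}\bigr),
\]
and define $\widetilde J(v) = \tfrac12 \int_B |\nabla v|^2\,dx - \int_B \widetilde G(x,v)\,dx$, with $\widetilde G(x,v) = \int_0^v \widetilde g(x,s)\,ds$. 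Since $\overline v \in L^\infty(B)$, $\widetilde g$ is dominated in $v$ by a fixed $L^\infty$-function of $x$, so $\widetilde J$ is of class $C^1$, coercive, and weakly lower semicontinuous on $H^1_{0,{\rm rad}}(B)$.

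Next, I would let $\widetilde v := \widetilde v_{\alpha,{\rm rad},\lambda}$ be a minimizer of $\widetilde J$ on $H^1_{0,{\rm rad}}(B)$. Testing its Euler-Lagrange equation against the radial functions $\widetilde v^-$ and $(\widetilde v - \overline v)^+$ yields $0 \leq \widetilde v \leq \overline v$ in $B$, the upper bound exploiting the strict supersolution property of $\overline v$. To exclude $\widetilde v \equiv 0$, I would evaluate $\widetilde J$ at $\epsilon \varphi_{1,\alpha}$: for small $\epsilon > 0$, the leading term of $\widetilde G(x, \epsilon\varphi_{1,\alpha})$ is $\epsilon a^p |x|^{\alpha}\varphi_{1,\alpha}(x)$, forcing $\widetilde J(\epsilon \varphi_{1,\alpha}) < 0 = \widetilde J(0)$. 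Hence $\widetilde v \not\equiv 0$, and the strong maximum principle combined with the strict inequality $-\Delta(\overline v - \widetilde v) > 0$ in $B$ gives $0 < \widetilde v < \overline v$ in $B$. On the range $0 \leq v \leq \overline v(x)$ one has $\widetilde g = g$, so $\widetilde v$ is a weak radial solution of \eqref{equiv1}; Theorem \ref{teo regularidade radial} and Remark \ref{classical regularity radial} then provide $\widetilde v \in C^{2,\gamma}(\overline B)$ and classical solvability.

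Because $\widetilde v, \overline v \in C^{2,\gamma}(\overline B)$ with $0 < \widetilde v < \overline v$ in $B$, and because by Hopf both $\widetilde v$ and $\overline v - \widetilde v$ have strictly negative outward normal derivatives on $\partial B$, a standard $C^1$-openness argument supplies $r > 0$ such that every $v \in C^1_{0,{\rm rad}}(\overline B)$ with $\|v - \widetilde v\|_{C^1} \leq r$ satisfies $0 \leq v \leq \overline v$ pointwise in $\overline B$. On this order interval a direct computation gives $\widetilde J(v) = J_{\lambda,{\rm rad}}(v) + \tfrac{a^{p+1}}{p+1}\int_B |x|^{\alpha}\,dx$, so $\widetilde v$ is a $C^1_{{\rm rad}}$-local minimizer of $J_{\lambda,{\rm rad}}$; Theorem \ref{teo HXC radial introduction} then upgrades it to an $H^1_{{\rm rad}}$-local minimum. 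The energy sign is obtained from $J_{\lambda,{\rm rad}}(\widetilde v) = \widetilde J(\widetilde v) - \tfrac{a^{p+1}}{p+1}\int_B|x|^{\alpha}\,dx \leq \widetilde J(\epsilon \varphi_{1,\alpha}) - \tfrac{a^{p+1}}{p+1}\int_B|x|^{\alpha}\,dx < 0$.

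For the asymptotic \eqref{barra comportamento}, the minimality of $v_{\alpha,\lambda}$ forces the sandwich $v_{\alpha,\lambda} \leq \widetilde v_{\alpha,{\rm rad},\lambda} \leq v_{\alpha,\mu(\lambda)}$ in $B$. Choosing $\mu(\lambda)$ so that $\mu(\lambda)/\lambda \rt 1$ as $\lambda \rt 0^+$ (for instance $\mu(\lambda) = \min\{2\lambda,(\lambda+\lambda_*)/2\}$), dividing through by $\lambda^{p/(p-1)}e_\alpha$, and applying Proposition \ref{sol minimais} to both extremes shows that each extreme tends to $1$ uniformly in $x$ and $\alpha$, hence so does the middle quotient. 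The main obstacle I expect is the $C^1_{{\rm rad}}$-openness of the order interval around $\widetilde v$: this is precisely the step where the supercritical growth of the true nonlinearity would obstruct a direct $H^1$-minimization argument, and it is exactly what Theorem \ref{teo HXC radial introduction} is designed to circumvent in the symmetric setting.
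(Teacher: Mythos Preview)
Your proposal is correct and follows essentially the same approach as the paper: both fix a larger parameter $\mu>\lambda$ (the paper calls it $\lambda'$), use the minimal solution $v_{\alpha,\mu}$ as a strict supersolution, minimize over the order interval $[0,v_{\alpha,\mu}]$, apply the regularity Theorem~\ref{teo regularidade radial} together with Theorem~\ref{teo HXC radial introduction} to upgrade the $C^1_{{\rm rad}}$-local minimum to an $H^1_{{\rm rad}}$-local minimum, and conclude the asymptotic \eqref{barra comportamento} from the sandwich $v_{\alpha,\lambda}\leq \widetilde v_{\alpha,{\rm rad},\lambda}\leq v_{\alpha,\mu}$ via Proposition~\ref{sol minimais}. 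The only cosmetic differences are that the paper outsources your explicit truncation/minimization step to \cite[Theorem~2.4]{struwe}, observes $J_{\lambda,{\rm rad}}(0)<0$ directly (rather than via $\epsilon\varphi_{1,\alpha}$), and leaves implicit your requirement $\mu(\lambda)/\lambda\to 1$, which you correctly make explicit to justify the upper half of the sandwich.
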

\begin{proof}
Given $0 < \lambda < \lambda_*$ fix any $\lambda' \in (\lambda, \lambda_*)$. Then observe that $0$ and $v_{\alpha,\lambda'}$ are, respectively, strict lower and upper solutions of \eqref{equiv1}.  Consider
\[
[0, v_{\alpha,\lambda'}] = \{ v \in C^1_{0, {\rm rad}}(B); 0 \leq v \leq v_{\alpha,\lambda'} \:\: \text{in}\:\:B  \}.
\]
Then arguing as \cite[Theorem 2.4]{struwe}, there exists $\widetilde{v}_{\alpha,{\rm rad}, \lambda} \in [0, v_{\alpha, \lambda'}]$ such that
\[
J_{\lambda, {\rm rad}}(\widetilde{v}_{\alpha,{\rm rad}, \lambda}) = \min_{v \in [0, v_{\alpha, \lambda'}]}J_{\lambda, {\rm rad}}(v).
\]
Then, by Theorems \ref{teo HXC radial introduction} and \ref{teo regularidade radial},  $\widetilde{v}_{\alpha,{\rm rad}, \lambda}$ is a classical solution of \eqref{equiv1} and, by the strong maximum principle, $0 < \widetilde{v}_{\alpha,{\rm rad}, \lambda} < v_{\alpha,\lambda'}$ in $B$.
Then  $\widetilde{v}_{\alpha,{\rm rad}, \lambda}$ is a local minimum of $J_{\lambda, {\rm rad}}$ in the $H^1_{0, {\rm rad}}(B)$ topology and, since $J_{\lambda, {\rm rad}}(0) < 0$, it follows that $J_{\lambda, {\rm rad}}(\widetilde{v}_{\alpha,{\rm rad}, \lambda})< 0$. Finally, from $v_{\alpha, \lambda} \leq \widetilde{v}_{\alpha,{\rm rad}, \lambda} < v_{\alpha, \lambda'}$ in $B$, we obtain \eqref{barra comportamento} from Proposition \ref{sol minimais}. 
\end{proof}

\begin{remark}
We do not know whether or not $\widetilde{v}_{\alpha,{\rm rad}, \lambda} = v_{\alpha, \lambda}$. See \cite{keener-keller} and also \cite[eq. (2.76)]{brezis-nirenberg} for a similar problem where it is proved that the minimal positive solution is the local minimum solution.
\end{remark}

\begin{proposition}\label{a priori estimates radial}
 Let $N \geq 1$, $\alpha >0$,  $\lambda \in (0, \lambda_*]$, $1 < p$ and in case $N \geq 3$ also assume $p \leq  2^*_{\alpha}-1$. Fix $0 < \gamma < \min\{ \alpha, 1\}$. Then there exists a positive constant $C$ such that any positive radial solution $v$ of {\rm \eqref{equiv1}} with $J_{\lambda,{\rm rad}}(v)<0$ satisfies
 \[
 \|v \|_{0, \gamma} < C.
 \]
As a consequence, \eqref{problem weight introduction} has a solution with $\lambda = \lambda_*$.
\end{proposition}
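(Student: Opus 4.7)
The plan is to obtain the a priori Hölder estimate in three steps and then apply compactness along a sequence $\lambda_n\nearrow\lambda_*$.

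First, I would extract uniform bounds on $\|\nabla v\|_2^2$ and on $A:=\int_B|x|^\alpha(v+a)^{p+1}\,dx$ from the two conditions $J_{\lambda,{\rm rad}}(v)<0$ and $J_{\lambda,{\rm rad}}'(v)v=0$. Testing with $v$ yields
\[
A = \|\nabla v\|_2^2 + a\int_B|x|^\alpha(v+a)^p\,dx,
\]
and combining with $J_{\lambda,{\rm rad}}(v)<0$ gives $\|\nabla v\|_2^2 < \frac{2a}{p-1}\int_B|x|^\alpha(v+a)^p\,dx$, whence $A < \frac{a(p+1)}{p-1}\int_B|x|^\alpha(v+a)^p\,dx$. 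The Hölder inequality with exponents $(p+1)/p$ and $p+1$ bounds the last integral by $A^{p/(p+1)}\big(\int_B|x|^\alpha\,dx\big)^{1/(p+1)}$, which closes into $A\le C(p,N,\alpha)\,a^{p+1}$; since $a=\lambda^{1/(p-1)}\le\lambda_*^{1/(p-1)}$, one obtains $A\le C$ and $\|\nabla v\|_2\le C$ uniformly.

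Second, I would upgrade the $H^1_{{\rm rad}}$-bound to a uniform $L^\infty$-bound by repeating, in quantitative form, the Brezis--Kato bootstrap used in the proof of Theorem \ref{teo regularidade radial}. Writing $|x|^\alpha(v+a)^p=h(x)(1+v)$ as in \eqref{legal des regularidade} and using Ni's pointwise estimate \eqref{des ni regularidade} to control $|x|^{\alpha N/2}v^{\alpha N/(N-2)}\in L^\infty(B)$ yields $\|h\|_{L^{N/2}(B)}\le C$ with a constant depending only on the bound of Step~1. The quantitative form of Brezis--Kato then gives $\|v\|_{L^q(B)}\le C_q$ for every $q<\infty$; the cases $N=1,2$ are easier since $H^1_0(B)\hookrightarrow L^q(B)$ for every $q$. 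Once $v$ is uniformly bounded, $|x|^\alpha(v+a)^p$ is dominated by $C|x|^\alpha\in L^t(B)$ for every $t\ge1$, so standard $L^t$-elliptic regularity yields $\|v\|_{W^{2,t}(B)}\le C_t$ for every $t\ge1$; Sobolev embedding with $t>N$ produces $\|v\|_{C^{1,\theta}(\overline{B})}\le C$ for every $\theta<1$, which in particular gives $\|v\|_{C^{0,\gamma}(\overline{B})}\le C$ for every $0<\gamma<\min\{\alpha,1\}$.

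For the consequence at $\lambda=\lambda_*$, I would take $\lambda_n\nearrow\lambda_*$ and use Proposition \ref{prop energia negativa} to produce radial classical solutions $\tilde v_n=\tilde v_{\alpha,{\rm rad},\lambda_n}$ of \eqref{equiv1} with $J_{\lambda_n,{\rm rad}}(\tilde v_n)<0$. The a priori bound applies uniformly in $n$; Arzelà--Ascoli extracts a subsequence converging in $C^0(\overline{B})$, and the elliptic estimates above upgrade the convergence to $W^{2,t}(B)$ for every $t\ge1$, producing a classical solution $v_*$ of \eqref{equiv1} at $\lambda=\lambda_*$. Setting $w_*=v_*/\lambda_*^{1/(p-1)}$ yields a candidate solution of \eqref{problem weight introduction}; a trivial limit $w_*\equiv0$ would force $\lambda_*|x|^\alpha\equiv0$ in $B$, contradicting $\lambda_*>0$, and the strong maximum principle then gives $w_*>0$ in $B$.

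The main obstacle is Step~2 in the critical case $p=2^*_\alpha-1$, where ordinary Moser iteration fails and one must lean on the quantitative Brezis--Kato procedure combined with Ni's pointwise decay of radial $H^1$ functions; the dependence of all constants only on $\lambda_*$ and on the $H^1_{{\rm rad}}$-bound from Step~1 is what makes the limiting argument for existence at $\lambda=\lambda_*$ go through.
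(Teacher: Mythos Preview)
Your proposal is correct and follows essentially the same route as the paper: combine $J_{\lambda,{\rm rad}}(v)<0$ with the equation (tested against $v$) and H\"older to bound $\int_B|x|^\alpha(v+a)^{p+1}$ and $\|\nabla v\|_2$, then invoke the quantitative Brezis--Kato/Guedda--V\'eron argument together with Ni's radial decay to pass to $L^\infty$, and finally apply $L^t$-elliptic regularity for the H\"older bound. The one cosmetic difference is in the limiting step: the paper passes to the limit along the \emph{minimal} solutions $v_{\alpha,\lambda}$ (which increase with $\lambda$ and are dominated by $\widetilde v_{\alpha,{\rm rad},\lambda}$, so the pointwise limit exists trivially), whereas you take the sequence $\widetilde v_{\alpha,{\rm rad},\lambda_n}$ itself and use Arzel\`a--Ascoli; both are fine. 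One small presentational point: between ``$\|v\|_{L^q}\le C_q$ for every $q$'' and ``once $v$ is uniformly bounded'' you are silently inserting one application of $W^{2,t}$-regularity with $t>N/2$; it would be cleaner to say so explicitly.
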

\begin{proof}
We recall that $a^{p-1} = \lambda$. We have that
\begin{multline*}
 0> J_{\lambda, {\rm rad}}(v) = \frac{1}{2}\int_B | \nabla v|^2 dx - \frac{1}{p+1}\int_B |x|^{\alpha} (v + a)^{p+1} dx \\
 = \frac{1}{2(p+1)}\int_B |x|^{\alpha} \left( (p-1)(v + a)^{p+1} - (v + a)^p 2a) \right)dx,
\end{multline*}
 and then,
 \[
 \int_B|x|^{\alpha}|v + a|^{p+1} dx \leq C,
 \]
with $C$ independent of $\alpha$. 
Once more using that $J_{{\rm rad}}(v)<0$, we get  
\begin{equation}\label{independente1}
\| v\| < C \quad \forall \ 0 < \lambda < \lambda_*,
\end{equation}
again with $C$ independent of $\alpha$. Then, arguing as in the proof of Theorem \ref{teo HXC radial introduction} based on \cite[Proposition 1.2]{gueda-veron}, we get $\| v\|_{\infty} \leq C$ for all $0< \lambda < \lambda^*$. 

Now, from Proposition \ref{prop energia negativa}, we have that $J_{\lambda, {\rm rad}}(\widetilde{v}_{\alpha,{\rm rad}, \lambda})< 0$.  Then using that $0< v_{\alpha, \lambda} \leq \widetilde{v}_{\alpha,{\rm rad}, \lambda}$ for all $0< \lambda< \lambda_*$ and the above a priori bounds, we obtain the existence of a solution to \eqref{equiv1} with $\lambda = \lambda_*$ as a limit of $v_{\alpha, \lambda}$ as $\lambda \uparrow \lambda_*$. 
 \end{proof}

\begin{proof}[\textbf{Proof of Theorem \ref{teorema existencia introduction}}] 
It follows directly from Proposition \ref{existence open interval} and \ref{a priori estimates radial}.
\end{proof}

\subsection{On partially symmetric local minimum solutions}
Let $1 < p$, $0< \lambda < \lambda_*$ and assume all the hypotheses of Theorem \ref{teo regularidade l}. Then from Remark \ref{remark regularity l}, we know that the (positive) critical points of the $C^1(H_l(B), \R)$-functional
\begin{equation}\label{funcional l}
J_{\lambda, l}(v) = \frac{1}{2}\int_B |\nabla v |^2 dx - \frac{1}{p+1} \int_B |x|^{\alpha} | v + a|^{p+1} dx, \quad v \in H_l(B), \quad a^{p-1} = \lambda,
\end{equation}
are precisely the classical $(l, N-l)$-symmetric solutions of \eqref{equiv1}.

\begin{proposition}\label{prop energia negativa l}
Let $1< p$ and assume all the hypotheses of Theorem {\rm \ref{teo regularidade l}}. If $0< \lambda < \lambda_*$, then exists $\widetilde{v}_{\alpha, l, \lambda} >0$ in $B$ such that $J_{\lambda, l}$ has a local minimum at $\widetilde{v}_{\alpha, l, \lambda}$, $J_{\lambda, l}(\widetilde{v}_{\alpha,l, \lambda})< 0$ and
\begin{equation}\label{barra comportamento l}
 \lim_{\lambda \rt 0 } \frac{\widetilde{v}_{\alpha,l, \lambda}}{\lambda^{p/(p-1)} e_{\alpha}} = 1, \quad \text{uniformly w.r.t.} \:\: x \:\: \text{and} \:\: \alpha.
 \end{equation}
\end{proposition}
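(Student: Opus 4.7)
The plan is to mirror the proof of Proposition \ref{prop energia negativa} with the space $H_l(B)$ and functional $J_{\lambda,l}$ in place of the radial counterparts, using Theorem \ref{HXC partial introduction} (the $H^1$ versus $C^1$ statement for partially symmetric functions) and Theorem \ref{teo regularidade l} (via Remark \ref{remark regularity l}) in place of their radial analogues. Concretely, given $0<\lambda<\lambda_*$ I would fix any $\lambda'\in(\lambda,\lambda_*)$ and consider the minimal classical solution $v_{\alpha,\lambda'}$ of \eqref{equiv1}, which by Proposition \ref{existence open interval}(iii) is radial, hence in particular $(l,N-l)$-symmetric, and which serves as a strict upper solution at parameter $\lambda$ (the zero function being a strict lower solution).

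Next I would define the order interval
\[
[0,v_{\alpha,\lambda'}] = \{v\in H_l(B) : 0\leq v\leq v_{\alpha,\lambda'}\ \text{a.e. in}\ B\},
\]
which is closed and convex in $H_l(B)$, and minimize $J_{\lambda,l}$ over it. Since $v_{\alpha,\lambda'}\in L^\infty(B)$, the nonlinearity $|x|^\alpha(v+a)^{p+1}$ is uniformly bounded on this set, so $J_{\lambda,l}$ is bounded below, weakly lower semicontinuous and coercive in $H_l(B)$ on $[0,v_{\alpha,\lambda'}]$; a standard direct method (exactly as in \cite[Theorem 2.4]{struwe}) produces a minimizer $\widetilde{v}_{\alpha,l,\lambda}\in[0,v_{\alpha,\lambda'}]$. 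Variational inequality against admissible directions, combined with the fact that $v_{\alpha,\lambda'}$ strictly separates $\widetilde{v}_{\alpha,l,\lambda}$ from the obstacle (after applying the strong maximum principle to $v_{\alpha,\lambda'}-\widetilde{v}_{\alpha,l,\lambda}$ and to $\widetilde{v}_{\alpha,l,\lambda}$ itself), shows that $\widetilde{v}_{\alpha,l,\lambda}$ is an unconstrained critical point of $J_{\lambda,l}$ on $H_l(B)$; Theorem \ref{teo regularidade l} and Remark \ref{remark regularity l} then upgrade it to a classical $(l,N-l)$-symmetric solution of \eqref{equiv1} with $0<\widetilde{v}_{\alpha,l,\lambda}<v_{\alpha,\lambda'}$ in $B$.

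The crux of the argument, and the step I expect to be the main obstacle, is promoting $\widetilde{v}_{\alpha,l,\lambda}$ from a $C^1_{0,l}$-local minimizer (which is immediate because $\widetilde{v}_{\alpha,l,\lambda}<v_{\alpha,\lambda'}$ strictly, so any small $C^1_{0,l}$-perturbation stays in $[0,v_{\alpha,\lambda'}]$) to an $H_l$-local minimizer. This is exactly what Theorem \ref{HXC partial introduction} delivers, provided the hypotheses \eqref{crescimento f l introduction} are met; since the hypotheses of Theorem \ref{teo regularidade l} are already assumed, these growth and symmetry conditions on $f(x,v)=|x|^\alpha(v+a)^p$ hold with $\alpha\geq\alpha_0(N,l,p)$ and $1<p<2_l-1$. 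Once the $H_l$-local minimum property is established, the inequality $J_{\lambda,l}(\widetilde{v}_{\alpha,l,\lambda})\leq J_{\lambda,l}(0)=-\frac{1}{p+1}\int_B |x|^\alpha a^{p+1}\,dx<0$ gives negativity of the energy.

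Finally, for the asymptotic \eqref{barra comportamento l}, I would squeeze $\widetilde{v}_{\alpha,l,\lambda}$ between the minimal solutions at parameters $\lambda$ and $\lambda'$. Taking $\lambda'=\lambda'(\lambda)\downarrow\lambda$ as $\lambda\downarrow 0$ (for instance $\lambda'=2\lambda$) and invoking Proposition \ref{sol minimais}, which gives $v_{\alpha,\mu}/(\mu^{p/(p-1)}e_\alpha)\to 1$ uniformly in $x$ and $\alpha$ as $\mu\to 0$, one obtains $\widetilde{v}_{\alpha,l,\lambda}/(\lambda^{p/(p-1)}e_\alpha)\to 1$ uniformly in $x$ and $\alpha$, exactly as in the radial case.
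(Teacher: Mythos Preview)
Your proposal is correct and follows essentially the same route as the paper, which simply says the proof is ``very similar to the proof of Proposition~\ref{prop energia negativa}, at this time with the aid of Theorem~\ref{HXC partial introduction}''; you have accurately unpacked that sentence, correctly noting that the radial minimal solutions $v_{\alpha,\lambda'}$ lie in $H_l(B)$ and serve as the strict upper solutions, and that Theorem~\ref{HXC partial introduction} replaces Theorem~\ref{teo HXC radial introduction}. The only cosmetic difference is that the paper writes the order interval in $C^1_{0,{\rm rad}}$ (resp.\ $C^1_{0,l}$) while you phrase it in $H_l(B)$, but since Struwe's argument passes through $H^1$ minimization and regularity anyway, this is immaterial; your explicit choice $\lambda'=2\lambda$ for the asymptotic is also a harmless clarification of what the paper leaves implicit.
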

\begin{proof}
It is very similar to the proof of Proposition \ref{prop energia negativa}, at this time with the aid of Theorem \ref{HXC partial introduction}. 
\end{proof}

\subsection{On local minimum solutions without symmetry}
Let $1 < p$, $N \geq 1$, $0< \lambda < \lambda_*$ and assume $1< p<2^*-1$. Then we know that classical solutions of \eqref{equiv1} are precisely the (positive) critical points of the $C^1(H^1_0(B), \R)$-functional
\begin{equation}\label{funcional h10}
J_{\lambda}(v) = \frac{1}{2}\int_B |\nabla v |^2 dx - \frac{1}{p+1} \int_B |x|^{\alpha} | v + a|^{p+1} dx,  \quad a^{p-1} = \lambda.
\end{equation}

\begin{proposition}\label{prop energia negativa h10}
Let $1 < p$, $N \geq 1$ and assume $1< p<2^*-1$. If $0< \lambda < \lambda_*$, then exists $\widetilde{v}_{\alpha, \lambda} >0$ in $B$ such that $J_{\lambda}$ has a local minimum at $\widetilde{v}_{\alpha, \lambda}$, $J_{\lambda}(\widetilde{v}_{\alpha, \lambda})< 0$ and
\begin{equation}\label{barra comportamento h10}
 \lim_{\lambda \rt 0 } \frac{\widetilde{v}_{\alpha, \lambda}}{\lambda^{p/(p-1)} e_{\alpha}} = 1, \quad \text{uniformly w.r.t.} \:\: x \:\: \text{and} \:\: \alpha.
 \end{equation}
\end{proposition}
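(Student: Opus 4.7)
The plan is to mimic the proofs of Propositions \ref{prop energia negativa} and \ref{prop energia negativa l}, with the essential simplification that, because $1 < p < 2^{*}-1$ is subcritical in the classical sense, the standard Brezis--Kato \cite{brezis-kato} regularity and the classical Brezis--Nirenberg \cite{brezis-nirenbergHXC} $H^1$ versus $C^1$ local minimizer theorem apply directly in $H^{1}_0(B)$, replacing the symmetric regularity and minimizer results (Theorems \ref{teo regularidade radial}, \ref{teo HXC radial introduction}, \ref{teo regularidade l}, \ref{HXC partial introduction}) used in the two prior propositions. In particular, no symmetry of $\widetilde{v}_{\alpha,\lambda}$ needs to be imposed or recovered; one works in the ambient space $H^1_0(B)$ throughout, and the weight $|x|^\alpha$ is harmless because it is bounded on $B$.

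First I would fix $\lambda' \in (\lambda, \lambda_*)$ so that the minimal solution $v_{\alpha,\lambda'}$ of \eqref{equiv1} at parameter $\lambda'$ is a strict supersolution of \eqref{equiv1} at parameter $\lambda$, while $0$ is a strict subsolution. Setting
\[
[0, v_{\alpha, \lambda'}] = \{ v \in H^1_0(B); \, 0 \leq v \leq v_{\alpha, \lambda'} \text{ in } B \},
\]
a truncation argument as in \cite[Theorem 2.4]{struwe} produces a minimizer $\widetilde{v}_{\alpha,\lambda}$ of $J_{\lambda}$ on this closed convex order interval, which is automatically a weak solution of \eqref{equiv1}. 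Because $p < 2^{*}-1$, the classical Brezis--Kato iteration and Schauder theory place $\widetilde{v}_{\alpha,\lambda}$ in $C^{1,\gamma}(\overline{B})$, and the strong maximum principle together with the Hopf lemma give $0 < \widetilde{v}_{\alpha,\lambda} < v_{\alpha,\lambda'}$ in $B$, with strict sign of the normal derivatives of both $\widetilde{v}_{\alpha,\lambda}$ and $v_{\alpha,\lambda'} - \widetilde{v}_{\alpha,\lambda}$ on $\partial B$. Hence $\widetilde{v}_{\alpha,\lambda}$ lies in the $C^1_0(\overline{B})$-interior of $[0,v_{\alpha,\lambda'}]$ and is therefore a $C^{1}$-local minimum of $J_{\lambda}$; the Brezis--Nirenberg theorem then upgrades it to an $H^1_0(B)$-local minimum.

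The strict inequality $J_{\lambda}(\widetilde{v}_{\alpha,\lambda}) < 0$ is immediate from $0 \in [0, v_{\alpha, \lambda'}]$ together with $J_{\lambda}(0) = -\tfrac{a^{p+1}}{p+1}\int_B |x|^{\alpha}\,dx < 0$. For the asymptotic \eqref{barra comportamento h10}, the minimality of $v_{\alpha,\lambda}$ forces $v_{\alpha,\lambda} \leq \widetilde{v}_{\alpha,\lambda} \leq v_{\alpha,\lambda'}$, and choosing $\lambda' = \lambda'(\lambda)$ with $\lambda'/\lambda \to 1$ as $\lambda \to 0$, the desired limit will follow by a sandwich from Proposition \ref{sol minimais}. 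The main point that has to be checked carefully, as in the analogous symmetric cases, is the Hopf-type $C^1$-separation of $\widetilde{v}_{\alpha,\lambda}$ from the endpoints $0$ and $v_{\alpha,\lambda'}$ that enables the Brezis--Nirenberg step; beyond that I expect the argument to be routine given the machinery already established.
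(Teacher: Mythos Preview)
Your proposal is correct and follows essentially the same route as the paper, which simply states that the proof is ``very similar to the proof of Proposition~\ref{prop energia negativa}, at this time with the aid of \cite{brezis-nirenbergHXC}.'' You have filled in precisely the details the paper leaves implicit: the sub-/supersolution interval, the Struwe minimization, the classical Brezis--Kato regularity in place of Theorem~\ref{teo regularidade radial}, the Hopf boundary separation, and the Brezis--Nirenberg $C^1$-to-$H^1$ upgrade in place of Theorem~\ref{teo HXC radial introduction}. Your remark that one must let $\lambda'/\lambda\to 1$ to close the sandwich for \eqref{barra comportamento h10} is in fact slightly more careful than the paper's own sketch.
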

\begin{proof}
It is very similar to the proof of Proposition \ref{prop energia negativa}, at this time with the aid of \cite{brezis-nirenbergHXC}. 
\end{proof}

\begin{remark}
We do not know whether or not $\widetilde{v}_{\alpha,{\rm rad}, \lambda} = v_{\alpha, \lambda} = \widetilde{v}_{\alpha, l, \lambda} = \widetilde{v}_{\alpha, \lambda}$.
\end{remark}


\section{Multiple positive solutions: proof of Theorem \ref{teorema multiplicidade introduction}} \label{section multiplicidade}

\subsection{Proof of Theorem {\rm \ref{teorema multiplicidade introduction} (I)}:  under the extra assumption $1 < p < 2^*_{\alpha}-1$ in case $N \geq 3$}\label{subcritical section}

In this case $J_{\lambda, {\rm rad}}$, as defined by \eqref{funcional radial}, satisfies the (PS) condition. In addition, $J_{\lambda, {\rm rad}}$ has a local minimum at $\widetilde{v}_{\alpha,{\rm rad}, \lambda}$, with $\widetilde{v}_{\alpha,{\rm rad}, \lambda}$ given by Proposition \ref{prop energia negativa}. We recall that $-\Delta \widetilde{v}_{\alpha,{\rm rad}, \lambda} >0$ in $B$, $\widetilde{v}_{\alpha,{\rm rad}, \lambda} > 0$ in $B$ and that $J_{\lambda, {\rm rad}}(\widetilde{v}_{\alpha,{\rm rad}, \lambda}) <  0$. Let $r_{\lambda}>0$ such that
\begin{equation}\label{minmin}
J_{\lambda, {\rm rad}}(\widetilde{v}_{\alpha,{\rm rad}, \lambda}) \leq J_{\lambda, {\rm rad}}(v) \quad \forall \, v \in H^1_{0, \rm{rad}}(B) \ \ \text{s.t.} \ \ \| v - \widetilde{v}_{\alpha,{\rm rad}, \lambda}\| < r_{\lambda}.
\end{equation}

We have to consider two cases.
\medbreak 
\noindent\textbf{Case 1:} There exist $\epsilon_\lambda >0$ and $r \in (0, r_{\lambda})$ such that
\[
J_{\lambda, {\rm rad}}(v) >J_{\lambda, {\rm rad}}(\widetilde{v}_{\alpha,{\rm rad}, \lambda}) + \epsilon_\lambda, \quad \forall \, v\in H^1_{0, {\rm rad}}(B), \, \| v - \widetilde{v}_{\alpha,{\rm rad}, \lambda} \|  = r.
\]
Choose $v_0 \in H^1_{0, {\rm rad}}(B)$ such that $v_0 \geq 0$ in $B$, $\|v_0 - \widetilde{v}_{\alpha,{\rm rad}, \lambda}\| > r_\lambda$ and $J_{\lambda, {\rm rad}}(v_0) <  J_{\lambda, {\rm rad}}(\widetilde{v}_{\alpha,{\rm rad}, \lambda})$.  Set
\[
\Gamma_{\alpha, \lambda, {\rm rad}} = \{ \gamma \in C([0,1], H^1_{0, {\rm rad}}(B)); \gamma(0) = v_{\alpha, \lambda} \,\, \text{and} \,\, \gamma(1) = v_0\} \quad \text{and}
\]
\begin{equation}\label{mountain pass level radial}
m_{\alpha, \lambda, {\rm rad}} = \inf_{\gamma \in \Gamma_{\alpha, \lambda, {\rm rad}}} \max_{t \in [0,1]} J_{\lambda, {\rm rad}}(\gamma(t)).
\end{equation}
Now observe that $J_{\lambda, {\rm rad}}(|v|) \leq J_{\lambda, {\rm rad}}(v)$ for every $v \in H^1_{0, {\rm rad}}(B)$. Then, from \cite[Theorem 2.8]{willem}, any mountain pass solution of $J_{\lambda, {\rm rad}}$ associated to the mountain pass level $m_{\alpha, \lambda, {\rm rad}}$ is positive in $B$. Take $V_{\alpha,\lambda, {\rm rad}}$ a mountain pass critical point of $J_{\lambda, {\rm rad}}$ associated to the mountain pass level $m_{\alpha,\lambda, {\rm rad}}$. Then, $V_{\alpha, \lambda, {\rm rad}}$ is a classical solution of \eqref{equiv1} such that $J_{\lambda, {\rm rad}}(V_{\alpha, \lambda, {\rm rad}}) > J_{\lambda, {\rm rad}}(\widetilde{v}_{\alpha,{\rm rad}, \lambda})$. Consequently, \eqref{problem weight introduction} has at least two radial solutions.

\medbreak
\noindent \textbf{Case 2:} Suppose that Case 1 does not hold.
${}$
\noindent In this case, for every $r \in (0, r_{\lambda})$,
\[
\inf\{ J_{\lambda, {\rm rad}}(v); \| v - \widetilde{v}_{\alpha,{\rm rad}, \lambda}\| = r \} = J_{\lambda, {\rm rad}}(\widetilde{v}_{\alpha,{\rm rad}, \lambda}).
\]
Therefore, by \cite[Theorem 5.10]{djairo-LecturesEkeland}, for every $r \in (0, r_{\lambda})$ there exists $v_{0,r} \in H^1_{0, \rm{rad}}(B)$ such that
\[
\| v_{0,r} - \widetilde{v}_{\alpha,{\rm rad}, \lambda}\| = r \quad \text{and} \quad J_{\lambda, {\rm rad}}(\widetilde{v}_{\alpha,{\rm rad}, \lambda}) = J_{\lambda, {\rm rad}}(v_{0,r}).
\]
Then we have that each $v_{0,r}$ is also a local minimum of $J_{\lambda, {\rm rad}}$ and therefore a classical solution of 
\begin{equation}\label{auxaux}
 -\Delta v = |x|^{\alpha}|v +a|^{p-1}(v+a) \ \text{in} \ B, \ \ v = 0 \ \text{on} \ \partial B, \ \text{with} \ a^{p-1} = \lambda.
\end{equation}
We claim that each $v_{0,r}\geq0$ in $B$ and therefore (by the strong maximum principle) another radial solution of \eqref{equiv1} as desired. By contradiction suppose that ``$v_0\geq0$ in $B$'' is not satisfied. Then observe that
\begin{multline*}
\int_B |\nabla |v_0| - \nabla \widetilde{v}_{\alpha,{\rm rad}, \lambda}|^2 dx = \int_B |\nabla v_0 - \nabla \widetilde{v}_{\alpha,{\rm rad}, \lambda}|^2 dx - 2 \int_B \nabla (|v_0| - v_0) \nabla \widetilde{v}_{\alpha,{\rm rad}, \lambda} dx\\
= \int_B |\nabla v_0 - \nabla \widetilde{v}_{\alpha,{\rm rad}, \lambda}|^2 dx - 2 \int_B (|v_0| - v_0) (- \Delta \widetilde{v}_{\alpha,{\rm rad}, \lambda}) dx \leq \int_B |\nabla v_0 - \nabla \widetilde{v}_{\alpha,{\rm rad}, \lambda}|^2 dx
\end{multline*}
that is
\[
\| |v_0| - \widetilde{v}_{\alpha,{\rm rad}, \lambda}\| \leq \| v_0 - \widetilde{v}_{\alpha,{\rm rad}, \lambda}\| < r
\]
and 
\[
J_{\lambda, {\rm rad}}(|v_0|) < J_{\lambda, {\rm rad}}(v_0) = J_{\lambda, {\rm rad}}(\widetilde{v}_{\alpha,{\rm rad}, \lambda}),
\]
which contradicts \eqref{minmin}. Therefore $v_0\geq 0$ in $B$ and, by the strong maximum principle, we infer that $v_0>0$ in $B$. Consequently, \eqref{problem weight introduction} has at least two radial solutions.

\subsection{Proof of Theorem {\rm \ref{teorema multiplicidade introduction} (I)}\label{duassol}:  with $N \geq 3$ and $p = 2^*_{\alpha}-1$}

In this case $J_{\lambda, {\rm rad}}$ does not satisfies the (PS) condition at all levels. To overcome this difficulty we adopt an approach different from that in Section \ref{subcritical section} and close to the proof of \cite[Theorem 2.5]{djairo-gossez-ubilla}.

For that, it is essential to study
\begin{equation}\label{todo rn}
-\Delta u = |x|^{\alpha} u^{2^*_{\alpha}-1} \quad \text{in} \quad \R^N, \quad u>0 \quad \text{in} \quad \R^N,
\end{equation}
and in this direction we cite the pioneering work \cite{fowler}. Let
\[
\mathcal{D}^{1,2}_{{\rm rad}}(\R^N) = \left\{  u \in \mathcal{D}^{1,2}(\R^N), u \: \text{is radially symmetric}\right\}.
\]
Arguing as in \cite{ni}, \cite{ederson-djairo-olimpio}, see also \cite{strauss}, we know that every element $u \in \mathcal{D}^{1,2}_{{\rm rad}}(\R^N)$ has a representative $U$, also denoted by $u$, which is continuous in $\R^N \menos\{0\}$ and satisfies
\[
\gd{ |u(x)| \leq \frac{1}{[(N-2)\omega_{N}]^{1/2}} \frac{\left(\int_{\R^N}|\nabla u| dx\right)^{1/2}}{|x|^{\frac{N-2}{2}}}, \quad \forall \, x \in \R^N\menos\{0\}. }
\]
At this time we use the identity $u(x) = - \int_{|x|}^{\infty} u'(t) dt$.
Then using the embedding $\mathcal{D}^{1,2}(\R^N) \hookrightarrow L^{2^*}(\R^N)$ and arguing as in \cite[p. 3742]{ederson-djairo-olimpio} we conclude that $\mathcal{D}^{1,2}_{{\rm rad}}(\R^N) \hookrightarrow L^{2^*_{\alpha}}(\R^N, |x|^{\alpha})$.

Now consider
\begin{equation}\label{minimization rn}
\gd{S_{\alpha}(\R^N) = \inf_{u \in {\mathcal D}^{1,2}_{{\rm rad}}(\R^N) \menos \{ 0\} } \frac{\int_{\R^N} |\nabla u|^2 dx}{ \left( \int_{\R^N} |u|^{2^*_{\alpha}} |x|^{\alpha}dx\right)^{2/2^*_{\alpha}}}}.
\end{equation}

We know that any solution of $S_{\alpha}(\R^N)$ induces, up to multiplication by a constant, a $C^2(\R^N)$ radial solution of \eqref{todo rn}.
On the other hand, it is proved in \cite[eq. (A.9)]{gidas-spruck} that all $C^2(\R^N)$ radial solutions of \eqref{todo rn} are explicitly given by
\begin{equation}\label{familia de solucoes}
 \mathfrak{u}_{\alpha,\theta}(x) = \left[\frac{ \sqrt{\theta(N-2)(N+\alpha)}}{\theta + |x|^{\alpha + 2}} \right]^{\frac{N-2}{\alpha + 2}},  \quad x\in \R^N, \,\, \theta>0.
\end{equation}

\begin{lemma}\label{invariancia}
Given $R > 0$ let $B_R = \{ x \in \R^N; |x| < R\}$. Set
\[
\gd{S_{\alpha}(B_R) = \inf_{u \in {H}^{1}_{0,{\rm rad}}(B_R) \menos \{ 0\} } \frac{\int_{B_R} |\nabla u|^2 dx}{ \left( \int_{B_R} |u|^{2^*_{\alpha}} |x|^{\alpha}dx\right)^{2/2^*_{\alpha}}}}.
\]
Then $S_{\alpha}(B_R)=S_{\alpha}(\R^N)$.
\end{lemma}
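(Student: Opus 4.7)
The plan is to prove the two inequalities separately. The bound $S_\alpha(\R^N)\leq S_\alpha(B_R)$ is immediate: any $u\in H^1_{0,\mathrm{rad}}(B_R)$ extended by zero to $\R^N$ lies in $\mathcal{D}^{1,2}_{\mathrm{rad}}(\R^N)$ with unchanged Dirichlet and weighted $L^{2^*_\alpha}$ integrals, so the infimum over the larger admissible class cannot exceed $S_\alpha(B_R)$.

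For the reverse inequality $S_\alpha(B_R)\leq S_\alpha(\R^N)$ I would exploit the scale invariance of the Rayleigh quotient. For $u\in \mathcal{D}^{1,2}_{\mathrm{rad}}(\R^N)\setminus\{0\}$ and $\lambda>0$, set $u_\lambda(x)=\lambda^{(N-2)/2}u(\lambda x)$. A direct change of variables, combined with the identity $\tfrac{(N-2)2^*_\alpha}{2}=N+\alpha$ which is just the definition of $2^*_\alpha$, yields
\[
\int_{\R^N}|\nabla u_\lambda|^2\,dx=\int_{\R^N}|\nabla u|^2\,dx,\qquad \int_{\R^N}|u_\lambda|^{2^*_\alpha}|x|^\alpha\,dx=\int_{\R^N}|u|^{2^*_\alpha}|x|^\alpha\,dx,
\]
so the quotient appearing in \eqref{minimization rn} is preserved under $u\mapsto u_\lambda$.

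Next I construct a minimizing sequence with compact support by truncating an explicit extremal. Fix $\mathfrak{u}=\mathfrak{u}_{\alpha,1}$ from \eqref{familia de solucoes}, which realizes $S_\alpha(\R^N)$, choose a smooth radial cutoff $\chi\in C^\infty_c(\R^N)$ with $\chi\equiv 1$ on $B_1$ and $\mathrm{supp}\,\chi\subset B_2$, and set $\mathfrak{u}_n(x)=\chi(x/n)\mathfrak{u}(x)$. Using the explicit decays $\mathfrak{u}(x)\lesssim|x|^{-(N-2)}$ and $|\nabla\mathfrak{u}(x)|\lesssim|x|^{-(N-1)}$ as $|x|\to\infty$, together with the finiteness of $\int|\nabla\mathfrak{u}|^2\,dx$ and $\int|\mathfrak{u}|^{2^*_\alpha}|x|^\alpha\,dx$, dominated convergence gives $\int|\nabla \mathfrak{u}_n|^2\,dx\to\int|\nabla\mathfrak{u}|^2\,dx$ and $\int|\mathfrak{u}_n|^{2^*_\alpha}|x|^\alpha\,dx\to\int|\mathfrak{u}|^{2^*_\alpha}|x|^\alpha\,dx$. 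Each $\mathfrak{u}_n$ is radial and supported in $\overline{B_{2n}}$, so with $\lambda_n=2n/R$ the rescaled function $(\mathfrak{u}_n)_{\lambda_n}$ belongs to $H^1_{0,\mathrm{rad}}(B_R)$; its quotient coincides with that of $\mathfrak{u}_n$ and hence tends to $S_\alpha(\R^N)$. Taking the infimum yields $S_\alpha(B_R)\leq S_\alpha(\R^N)$.

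The only mildly delicate step is controlling the cutoff error in the Dirichlet integral, specifically the cross term $\int \mathfrak{u}(x)^2|\nabla\chi(x/n)|^2\,dx$, which is $O(n^{-2})\int_{B_{2n}\setminus B_n}|\mathfrak{u}(x)|^2\,dx$ and vanishes as $n\to\infty$ thanks to the decay $|\mathfrak{u}(x)|\lesssim|x|^{-(N-2)}$. Every remaining step reduces to dominated convergence or the direct scaling computation.
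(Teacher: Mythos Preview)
Your proof is correct and follows essentially the same strategy as the paper: extension by zero for $S_\alpha(\R^N)\leq S_\alpha(B_R)$, and for the reverse inequality a compactly supported minimizing sequence in $\mathcal{D}^{1,2}_{\mathrm{rad}}(\R^N)$ rescaled into $H^1_{0,\mathrm{rad}}(B_R)$ via the dilation $u\mapsto \lambda^{(N-2)/2}u(\lambda\,\cdot)$, which preserves the Rayleigh quotient. The only difference is that the paper simply asserts the existence of such a sequence (implicitly by density of $C^\infty_{c,\mathrm{rad}}$ in $\mathcal{D}^{1,2}_{\mathrm{rad}}$), whereas you construct it explicitly by truncating the extremal $\mathfrak{u}_{\alpha,1}$; this is a harmless elaboration rather than a different route.
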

\begin{proof}
Extending by zero an element in ${H}^{1}_{0,{\rm rad}}(B_R)$ we obtain an element in ${\mathcal D}^{1,2}_{{\rm rad}}(\R^N)$. Then $S^{\alpha}(\R^N) \leq S_{\alpha}(B_R)$.

Now, let $(w_n) \con {\mathcal D}_{{\rm rad}}(\R^N)$ be a minimizing sequence for $S_{\alpha}(\R^N)$ such that, for each $n$, ${\rm supp}(w_n) \con B_{r_n}(0)$. Then $u_n(x) = \left(\frac{r_n}{R}\right)^{\frac{N-2}{2}}w\left( \frac{r_n}{R} x\right) \in H^1_{0, {\rm rad}}(B)$ and
\[
\frac{\int_{B_R} | \nabla u_n|^2 dx}{\left( \int_{B_R} | u_n|^{2^*_{\alpha}}|x|^{\alpha} dx \right)^{2/2^*_{\alpha}}} =\frac{\int_{\R^N} | \nabla w_n|^2 dx}{\left( \int_{\R^N} | w_n|^{2^*_{\alpha}}|x|^{\alpha} dx \right)^{2/2^*_{\alpha}}}.
\]
Therefore, $S_{\alpha}(B_R) = S_{\alpha}(\R^N)$. 
\end{proof}

In view of Lemma \ref{invariancia} we will denote $S_{\alpha}(\R^N)$ and $S_{\alpha}(B_R)$ by $S_{\alpha}$.


Fix $\lambda$ with $0 <\lambda<\lambda_*$. We recall that, with $a^{p-1} = \lambda$,
\[
J_{\lambda, {\rm rad}}(v) = \frac{1}{2}\int_B |\nabla v |^2 dx - \frac{1}{p+1} \int_B |x|^{\alpha} | v + a|^{p+1} dx, \quad v \in H^1_{0, {\rm rad}}(B),
\]
has a local minimum at $\widetilde{v}_{\alpha,{\rm rad}, \lambda}$, $J_{\lambda, {\rm rad}}(\widetilde{v}_{\alpha,{\rm rad}, \lambda}) <  0$, $-\Delta \widetilde{v}_{\alpha,{\rm rad}, \lambda} > 0$ in $B$ and $\widetilde{v}_{\alpha,{\rm rad}, \lambda} > 0$ in $B$; see Proposition \ref{prop energia negativa}. It is enough to prove the existence of a nontrivial solution $w$ of
\begin{equation}\label{auxiliar equation critical}
\begin{cases}
-\Delta w= |x|^\alpha g(x,w) \quad \text{in} \quad B,\\
 w=0 \quad \mbox{on} \quad \partial B,
\end{cases}
\end{equation}
where $g(x,s)=(\widetilde{v}_{\alpha,{\rm rad}, \lambda}+s^++a)^{2^*_\alpha-1}-(\widetilde{v}_{\alpha,{\rm rad}, \lambda}+a)^{2^*_\alpha-1}$, since $w + \widetilde{v}_{\alpha,{\rm rad}, \lambda}$ will correspond to a second radial solution of \eqref{equiv1}.  Observe that any nontrivial solution $w$ of \eqref{auxiliar equation critical} satisfies $w >0$ in $B$.

The functional associated to \eqref{auxiliar equation critical} is
$$
J(w)=\frac{1}{2}\int_B|\nabla w|^2-\int_B |x|^\alpha G(x,w)dx, \quad w \in H^1_{0, {\rm rad}}(B),
$$
where
\begin{multline*}
G(x, s) :=\int_0^s g(x,t)\,dt \\
=\frac{(\widetilde{v}_{\alpha,{\rm rad}, \lambda}+s^++a)^{2^*_\alpha}}{2^*_\alpha}-\frac{(\widetilde{v}_{\alpha,{\rm rad}, \lambda}+a)^{2^*_\alpha}}{2^*_\alpha}-(\widetilde{v}_{\alpha,{\rm rad}, \lambda}+a)^{2^*_\alpha-1}s^+  .
\end{multline*}
Then $0$ is a local minimum of $J$ on $H_{0,{\rm rad}}^1(B)$ (see \cite[Lemma 4.2]{abc} for a similar result) and
we are going to prove the existence of a nonzero critical point for $J$.

By contradiction, assume that $0$ is the only critical point of $J$. Then for some ball
$B(0, r)$ in $H^1_{0,{\rm rad}}(B)$
\begin{equation}\label{minstrict}
J(0) < J(w)
\end{equation}
for all $w\in B(0, r) \menos\{0\}$. The following lemma, which is similar to \cite[Lemma 4.3 (ii)]{abc}, will be proved below.

\begin{lemma}\label{subsobolev}
Assume that $0$ is the only critical point of $J$. Then $J$ satisfies the $(PS)_c$ condition
for all levels $c$ with
\begin{equation}\label{eqsubsob}
c < c_0 := S_\alpha^\frac{N+\alpha}{\alpha+2}\left(\frac{\alpha+2}{2(N+\alpha)}\right).
\end{equation}
\end{lemma}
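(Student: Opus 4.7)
The plan is to run the standard Brezis--Nirenberg compactness argument for critical growth, adapted to the radial weighted setting, exploiting that $\widetilde{v}_{\alpha,{\rm rad},\lambda}+a$ is a bounded smooth function (so the difference $g(x,w)$ behaves at infinity like $(w^+)^{2^*_\alpha-1}$ with a subcritical perturbation). Let $(w_n)\subset H^1_{0,{\rm rad}}(B)$ be a Palais--Smale sequence at level $c$. First I would establish boundedness of $(w_n)$ in $H^1_{0,{\rm rad}}(B)$ from the identity
\[
J(w_n)-\tfrac{1}{2^*_\alpha}\langle J'(w_n),w_n\rangle=\Bigl(\tfrac12-\tfrac{1}{2^*_\alpha}\Bigr)\|\nabla w_n\|_2^2+\int_B|x|^\alpha\Bigl[\tfrac{1}{2^*_\alpha}w_n g(x,w_n)-G(x,w_n)\Bigr]dx,
\]
where the integrand on the right is of order $|w_n|^{2^*_\alpha-1}$ (a strictly subcritical power in the decomposition), hence controllable by $\varepsilon\|\nabla w_n\|_2^2+C_\varepsilon$ via the weighted Sobolev embedding.

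After extracting a subsequence with $w_n\rightharpoonup w$ in $H^1_{0,{\rm rad}}(B)$, the compact radial embeddings $H^1_{0,{\rm rad}}(B)\hookrightarrow L^q(B,|x|^\alpha)$ for every $q<2^*_\alpha$ (Ni-type embedding together with \cite{ederson-djairo-olimpio}) guarantee $w_n\to w$ in every such $L^q$ and a.e.\ in $B$. Passing to the limit in $\langle J'(w_n),\varphi\rangle\to 0$ for radial $\varphi\in C^\infty_{c,{\rm rad}}(B)$, one finds that $w$ is a critical point of $J$; by the standing assumption of the lemma (zero is the only critical point), $w\equiv 0$.

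Next I apply the Brezis--Lieb lemma to the critical term. Since $w_n\to 0$ a.e.\ and the family $(w_n^+)^{2^*_\alpha}$ is bounded in $L^1(B,|x|^\alpha)$,
\[
\int_B|x|^\alpha(\widetilde{v}_{\alpha,{\rm rad},\lambda}+a+w_n^+)^{2^*_\alpha}dx=\int_B|x|^\alpha(\widetilde{v}_{\alpha,{\rm rad},\lambda}+a)^{2^*_\alpha}dx+\int_B|x|^\alpha(w_n^+)^{2^*_\alpha}dx+o(1),
\]
while the mixed lower order pieces (involving at most $2^*_\alpha-1$ powers of $w_n^+$ multiplied by the bounded function $\widetilde v_{\alpha,{\rm rad},\lambda}+a$) vanish along the subsequence by the compact embedding into $L^q(B,|x|^\alpha)$, $q<2^*_\alpha$. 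Plugging this into $J(w_n)\to c$ and $\langle J'(w_n),w_n\rangle\to 0$ yields
\[
\tfrac12\|\nabla w_n\|_2^2-\tfrac{1}{2^*_\alpha}\int_B|x|^\alpha(w_n^+)^{2^*_\alpha}dx\to c,\qquad \|\nabla w_n\|_2^2-\int_B|x|^\alpha(w_n^+)^{2^*_\alpha}dx\to 0,
\]
so both quantities share a common limit $\ell^2\ge 0$ and $c=\bigl(\tfrac12-\tfrac{1}{2^*_\alpha}\bigr)\ell^2=\tfrac{\alpha+2}{2(N+\alpha)}\ell^2$.

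Finally, using $\|\nabla w_n^+\|_2^2\le \|\nabla w_n\|_2^2$ and the definition of $S_\alpha$ (via Lemma \ref{invariancia}, applied to $w_n^+\in H^1_{0,{\rm rad}}(B)$), if $\ell>0$ then
\[
\ell^2\ge S_\alpha\Bigl(\ell^2\Bigr)^{2/2^*_\alpha}\ \Longrightarrow\ \ell^2\ge S_\alpha^{\,(N+\alpha)/(\alpha+2)},
\]
whence $c\ge c_0$, contradicting \eqref{eqsubsob}. Therefore $\ell=0$, i.e.\ $w_n\to 0$ strongly in $H^1_{0,{\rm rad}}(B)$, and the $(PS)_c$ condition holds. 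The main obstacle I expect is the careful bookkeeping of the cross terms produced by expanding $(\widetilde v_{\alpha,{\rm rad},\lambda}+a+w_n^+)^{2^*_\alpha}$: one must check that each such term is either strictly subcritical (hence killed by the compact weighted embedding) or appears in the Brezis--Lieb splitting above, so that no stray concentration mass escapes into the remainder.
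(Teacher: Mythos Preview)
Your argument is correct and follows essentially the same Brezis--Nirenberg route as the paper: boundedness from a suitable linear combination of $J(w_n)$ and $\langle J'(w_n),\cdot\rangle$, weak limit forced to $0$ by the hypothesis, then concentration analysis against $S_\alpha$. The only cosmetic difference is that the paper tests with $\phi=\widetilde v_{\alpha,{\rm rad},\lambda}+w_n$ (so the $2^*_\alpha$ terms cancel algebraically) rather than invoking the Brezis--Lieb lemma explicitly as you do.
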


\medbreak 
From the preceding lemma together with \cite[Theorem 5.10]{djairo-LecturesEkeland}, which just requires the $(PS)_c$ condition to
hold at the level of the local minimum (here the level $J(0) = 0 < c_0$), one deduces
from \eqref{minstrict} that
\begin{equation}
J(0)<\inf\{J(w):\,w\in H_{0,{\rm rad}}^1(B)\mbox{ and }\|w\|=\rho\},
\end{equation}
holds for all $\rho$ in $(0,r)$. We intend to apply  the mountain pass theorem. For this purpose we will show the existence of $w_1\in H^1_{0,{\rm rad}}(B)$ such that $\|w_1\|> \rho$, $J(w_1) < 0$ and that the infmax value of $J$ over the family of all continuous paths
from 0 to $w_1$ is smaller than $c_0$. Once this is done, the usual mountain pass theorem yields the
existence of a nonzero critical point for $J$, a contradiction which will complete the proof
of this part of the theorem (Theorem {\rm \ref{teorema multiplicidade introduction} (I)).

To construct a $w_1$ as above, we consider as in \cite{abc} functions of the form $tu_{\alpha, \epsilon}$ with $t > 0$
and
\[
\gd{u_{\alpha, \epsilon}(x): = \frac{\varphi(x)\epsilon^{\frac{N-2}{2(\alpha + 2)}}}{(\epsilon + |x|^{\alpha + 2})^{\frac{N-2}{\alpha + 2}}}}, \quad x \in \R^N,
\]
where $\epsilon > 0$, $\varphi$ is a fixed $C_{c, {\rm rad}}^{\infty}(B)$-function such that $\varphi \equiv 1$ in $B(0,1/2)$ and $0 \leq \varphi \leq 1$. Note that
\[
\gd{  U_{\alpha} (x)= \frac{1}{(1 + |x|^{\alpha + 2})^{\frac{N- 2}{\alpha + 2}}}, \quad x \in \R^N,   }
\]
is a function where $S_{\alpha}$ is attained. Set
\[
\gd{K_1 = \int_{\R^N}|\nabla U_{\alpha}|^2 dy = (N-2)^2 \int_{\R^N} \frac{|y|^{2(\alpha + 1)}}{(1 + |y|^{\alpha + 2})^{\frac{2(N+\alpha)}{\alpha + 2}}} dy}
\]
and
\[ \gd{K_2 = |U_\alpha|_{2^*_{\alpha}, \alpha}^2 = \left(\int_{\R^N} \frac{|y|^{\alpha}}{(1 + |y|^{\alpha + 2})^{\frac{2(N+\alpha)}{\alpha + 2}}}dy\right)^{\frac{N-2}{N+\alpha}}. }
\]
Then observe that $S_{\alpha} =K_1/K_2$.

The following lemma implies that for $\epsilon$ sufficiently small, the infmax
value of $J$ over the family of all continuous paths from 0 to $w_1 = t_\epsilon u_{\alpha,\epsilon}$ is indeed smaller than $c_0$.

\begin{lemma}\label{subc0}
We have that
$$
\sup_{t>0} J(tu_{\alpha,\epsilon}) < c_0
$$
for $\epsilon > 0$ sufficiently small.
\end{lemma}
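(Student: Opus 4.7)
The plan is to follow the Brezis--Nirenberg strategy (cf.\ \cite{brezis-nirenberg, abc}), adapted to the weighted critical exponent $2^*_\alpha$. Write $W := \widetilde{v}_{\alpha,{\rm rad},\lambda} + a \in C(\overline{B})$, so that $W$ is strictly positive on $\overline{B}$ (since $a>0$) and
\[
G(x,s) = \frac{(W(x)+s)^{2^*_\alpha} - W(x)^{2^*_\alpha}}{2^*_\alpha} - W(x)^{2^*_\alpha-1}s, \qquad s\geq 0.
\]
The key technical input is a one-variable pointwise inequality of the form
\[
G(x,s) \geq \frac{s^{2^*_\alpha}}{2^*_\alpha} + c_1\, W(x)\,s^{2^*_\alpha-1} - C_2\, W(x)^{2^*_\alpha}, \qquad s\geq 0,
\]
with $c_1>0$, obtained from the Taylor expansion of $(1+t)^{2^*_\alpha}$ and, when $2 < 2^*_\alpha \leq 3$, a standard interpolation between the $t\to 0$ and $t\to \infty$ regimes. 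This is the mechanism producing the ``negative perturbation'' that will break the level $c_0$.

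Inserting $w = tu_{\alpha,\epsilon}$ and the inequality above into $J$ yields
\[
J(tu_{\alpha,\epsilon}) \leq \frac{t^2}{2}A_\epsilon - \frac{t^{2^*_\alpha}}{2^*_\alpha}B_\epsilon - c_1\, t^{2^*_\alpha-1}D_\epsilon + O(1),
\]
where $A_\epsilon = \int_B|\nabla u_{\alpha,\epsilon}|^2$, $B_\epsilon=\int_B|x|^\alpha u_{\alpha,\epsilon}^{2^*_\alpha}$, $D_\epsilon=\int_B|x|^\alpha W\,u_{\alpha,\epsilon}^{2^*_\alpha-1}$. Using the rescaling $y=\epsilon^{-1/(\alpha+2)}x$, the explicit profile $U_\alpha$, and the decay $U_\alpha(y)\sim |y|^{-(N-2)}$ at infinity, I would compute the asymptotics, as $\epsilon\to 0^+$,
\[
A_\epsilon = K_1 + O\big(\epsilon^{(N-2)/(\alpha+2)}\big), \qquad B_\epsilon = K_2^{(N+\alpha)/(N-2)} + O\big(\epsilon^{(N+\alpha)/(\alpha+2)}\big),
\]
\[
D_\epsilon = c_2\,W(0)\,\epsilon^{(N-2)/(2(\alpha+2))} + o\big(\epsilon^{(N-2)/(2(\alpha+2))}\big),
\]
where $c_2 = \int_{\R^N}|y|^\alpha U_\alpha^{2^*_\alpha-1}\,dy > 0$ (integrability at infinity holds because the integrand decays like $|y|^{-\alpha-3}$). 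Maximizing in $t$ the leading two terms gives
\[
\sup_{t\geq 0}\Big\{\tfrac{t^2}{2}A_\epsilon - \tfrac{t^{2^*_\alpha}}{2^*_\alpha}B_\epsilon\Big\} = \frac{\alpha+2}{2(N+\alpha)}\left(\frac{A_\epsilon}{B_\epsilon^{2/2^*_\alpha}}\right)^{\!(N+\alpha)/(\alpha+2)} = c_0 + o(1),
\]
since $A_\epsilon/B_\epsilon^{2/2^*_\alpha}\to S_\alpha$ by Lemma~\ref{invariancia}. The maximizer $t_\epsilon$ stays in a compact subset of $(0,\infty)$, so the full right-hand side is dominated by $c_0 + o(1) - c_1 t_\epsilon^{2^*_\alpha-1}D_\epsilon$. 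Since the correction $D_\epsilon$ is of order $\epsilon^{(N-2)/(2(\alpha+2))}$, which is \emph{strictly larger} than the cutoff errors $\epsilon^{(N-2)/(\alpha+2)}$ as $\epsilon\to 0^+$, the negative correction dominates and gives $\sup_{t\geq 0}J(tu_{\alpha,\epsilon}) < c_0$ for $\epsilon$ small.

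The main obstacle is the order-accounting step: one has to verify that the gain produced by the pointwise lower bound on $G$ (proportional to $\epsilon^{(N-2)/(2(\alpha+2))}$) beats both the $o(1)$ gap in $A_\epsilon/B_\epsilon^{2/2^*_\alpha}$ due to the cutoff $\varphi$ and the higher-order corrections coming from expanding $(W+s)^{2^*_\alpha}$. The weight $|x|^\alpha$ changes the scaling exponent from $1/2$ (classical case) to $1/(\alpha+2)$, and a clean bookkeeping via the change of variables $y=\epsilon^{-1/(\alpha+2)}x$ is needed to recover the sharp powers of $\epsilon$ in each integral; in particular one must check that $2^*_\alpha - 1 > (N+\alpha)/(N-2)$ so that $\int|y|^\alpha U_\alpha^{2^*_\alpha-1}$ converges, a point where the weighted structure plays a crucial role.
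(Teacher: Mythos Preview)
Your overall strategy coincides with one branch of the paper's proof (the inequality \eqref{majoracaosuperlinear}, used there for $\alpha>N-4$), and the asymptotics you state for $A_\epsilon$, $B_\epsilon$, $D_\epsilon$ are correct. However, there is a genuine gap in the handling of the constant term coming from your pointwise lower bound on $G$.

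The inequality
\[
G(x,s)\ \geq\ \frac{s^{2^*_\alpha}}{2^*_\alpha}+c_1\,W(x)\,s^{2^*_\alpha-1}-C_2\,W(x)^{2^*_\alpha}
\]
produces, after inserting $s=tu_{\alpha,\epsilon}$ and integrating, the additive term $+C_2\int_B|x|^\alpha W^{2^*_\alpha}\,dx$ in your upper bound for $J(tu_{\alpha,\epsilon})$. This is a \emph{fixed positive constant}, independent of $\epsilon$ and $t$. You write it as ``$+O(1)$'' and then silently drop it when you pass to ``$c_0+o(1)-c_1 t_\epsilon^{2^*_\alpha-1}D_\epsilon$''. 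That step is illegitimate: with the $O(1)$ retained, the bound reads $\sup_t J\leq c_0+M-c\,\epsilon^{(N-2)/(2(\alpha+2))}+o(1)$ with $M>0$ fixed, which does \emph{not} give $\sup_t J<c_0$ for small $\epsilon$.

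The issue is only real when $2<2^*_\alpha<3$, i.e.\ $N\geq 7$ and $0<\alpha<(N-6)/2$. When $2^*_\alpha\geq 3$ (in particular for all $N\leq 6$), the clean inequality $(W+s)^{p}\geq s^{p}+pWs^{p-1}$ with $p=2^*_\alpha-1\geq 2$ holds, so you may take $C_2=0$; then there is no $O(1)$ and your argument goes through exactly as the paper's in the range $\alpha>N-4$. In the low-exponent regime $2^*_\alpha<3$ this inequality fails near $s=0$, and your ``interpolation with $-C_2W^{2^*_\alpha}$'' does not rescue it. The paper instead switches to the \emph{linear} lower bound $g(x,s)\geq (2^*_\alpha-1)W^{2^*_\alpha-2}s$ (inequality \eqref{majoracaolinear}), which after integration contributes a quadratic gain $\int_B|x|^\alpha u_{\alpha,\epsilon}^2$. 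Since $2^*_\alpha<3$ forces $\alpha<N-4$, this gain is of order $\epsilon$ and beats the cutoff error $O(\epsilon^{(N-2)/(\alpha+2)})$; that is what closes the argument in this range.
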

The above two lemmas, that are proved below, and the fact that \linebreak $\lim_{t \rt \infty} J(t u_{\alpha, \epsilon}) = - \infty$ complete the proof of Theorem \ref{teorema multiplicidade introduction} (I).

\begin{proof}[\textbf{Proof of Lemma {\rm \ref{subsobolev}}}]
Let $w_n$ be a $(PS)_c$ sequence with $c < c_0$, i.e.
\begin{equation}\label{ps1}
\frac{1}{2}\|w_n\|^2-\int_B |x|^\alpha G(x,w_n)dx\longrightarrow c
\end{equation}
\begin{equation}\label{ps2}
\left|\int_B\nabla w_n\nabla\phi\,dx-\int_B |x|^\alpha g(x,w_n)\phi dx \right|\leq\varepsilon_n\|\phi\|,\qquad\forall\,\phi\in H_{0,{\rm rad}}^1(B),
\end{equation}
where $\varepsilon_n\to 0$. We first observe that $w_n$ remains bounded in $H_{0,{\rm rad}}^1(B)$. This follows by
multiplying \eqref{ps2}  with $\phi = \widetilde{v}_{\alpha,{\rm rad}, \lambda} +w_n$ by $1/2^*_\alpha$ and subtracting from \eqref{ps1}; the terms
of power $2^*_{\alpha}$ cancel and the remaining dominating term is $\|w_n\|^2$, which yields
the boundedness of $w_n$. So, for a subsequence, $w_n\rightharpoonup w_0$ in $H^1_{0,{\rm rad}}(B)$ and $w_n \rightarrow w_0$ in $L^r(B,|x|^\alpha)$ for
any $1 \leq r < 2_\alpha^*$. Then $w_0$ solves
$$
\begin{cases}
-\Delta w_0=|x|^\alpha g(x,w_0) \quad  \mbox{in }B\\ w_0=0 \quad \mbox{on }\partial B,
\end{cases}
$$
and consequently, by the assumption of this lemma, $w_0 = 0$. We now go back to \eqref{ps2}
with $\phi = w_0 + w_n$, multiply again by $1/2^*_\alpha$ and subtract from \eqref{ps1} to get
\begin{equation}\label{ps3}
\lim_{n\to\infty}\|w_n\|^2=2c\left(\frac{N+\alpha}{\alpha+2}\right).
\end{equation}
There are two possibilities: either $c = 0$ or $c \ne 0$. If $c = 0$ then $w_n \rt w_0$ in $H^1_{0,{\rm rad}}(B)$
by \eqref{ps3} and we are done. We will now see that $c \ne0$ leads to a contradiction. For that
purpose we deduce from \eqref{ps2} with $\phi = w_n$ that
\begin{equation}\label{ps4}
\lim_{n\to\infty}\|w_n\|^2=\lim_{n\to\infty}\int_B|x|^\alpha g(x,w_n)w_n\,dx=\lim_{n\to\infty}\int_B |x|^\alpha(w_n^+)^{2^*_\alpha}dx.
\end{equation}
By definition of $S_\alpha$, we have that
\begin{equation}\label{ps5}
\|w_n\|^2\geq S_\alpha\left(\int_B |x|^\alpha|w_n|^{2_\alpha^*dx}\right)^{2/2_\alpha^*}\geq S_\alpha\left(\int_B|x|^\alpha(w_n^+)^{2_\alpha^*}\,dx\right)^{2/2^*_\alpha}.
\end{equation}
It follows from \eqref{ps3}--\eqref{ps5} that
$$
2c\left(\frac{N+\alpha}{\alpha+2}\right)\geq S_\alpha\left(2c\left(\frac{N+\alpha}{\alpha+2}\right)\right)^{2/2_\alpha^*},
$$
ant then $c\geq c_0$. This contradicts \eqref{eqsubsob} and completes the proof of Lemma \ref{subsobolev}. 
\end{proof}

\begin{proof}[\textbf{Proof of Lemma {\rm \ref{subc0}}}]
We have that
\begin{multline}\label{majoracaolinear}
g(x,s)=(\widetilde{v}_{\alpha,{\rm rad}, \lambda}+s^++a)^{2^*_\alpha-1}-(\widetilde{v}_{\alpha,{\rm rad}, \lambda}+a)^{2^*_\alpha-1}\\\geq(s^+)^{2^*_\alpha-1}+(2^*_\alpha-1)(\widetilde{v}_{\alpha,{\rm rad}, \lambda}+a)^{2^*_\alpha-2}s^+.
\end{multline}
Consequently,
$$
J(tu_{\alpha,\epsilon})\leq\frac{t^2\|u_{\alpha,\epsilon}\|^2}{2}-\frac{t^{2^*_\alpha}}{2_\alpha^*}\int_B |x|^\alpha u_{\alpha,\epsilon}^{2_\alpha^*}-\frac{t^2}{2}(2^*_\alpha-1)\int_B|x|^\alpha (\widetilde{v}_{\alpha,{\rm rad}, \lambda}+a)^{2^*_\alpha-2}u_{\alpha,\epsilon}^2.
$$
Since $\widetilde{v}_{\alpha,{\rm rad}, \lambda}\geq b_0>0$ on the support of $u_{\alpha,\epsilon}^2$, we deduce
$$
J(tu_{\alpha,\epsilon})\leq\frac{t^2\|u_{\alpha,\epsilon}\|^2}{2}-\frac{t^{2^*_\alpha}}{2_\alpha^*}\int_B|x|^\alpha  u_{\alpha,\epsilon}^{2_\alpha^*}-\frac{t^2}{2}(2^*_\alpha-1)(b_0+a)^{2^*_\alpha-2}\int_B |x|^\alpha u_{\alpha,\epsilon}^2.
$$

By definition we have
\[
\gd{\nabla u_{\alpha, \epsilon}(x) =\epsilon^{\frac{N-2}{2(\alpha + 2)}}\left[ \frac{\nabla \varphi(x)}{(\epsilon + |x|^{\alpha + 2})^{\frac{N-2}{\alpha + 2}}} - (N-2) \frac{\varphi(x)|x|^{\alpha}x}{(\epsilon + |x|^{\alpha + 2})^{\frac{N+\alpha}{\alpha + 2}}}\right]. }
\]
Then, since $N\geq 3$ and $\varphi \equiv 1$ in $B(0,1/2)$, we get
\[
\gd{ \int_B |\nabla u_{\alpha, \epsilon}|^2 dx =\epsilon^{\frac{N-2}{\alpha + 2}}\left[ O(1) + (N-2)^2 \int_{\R^N} \frac{|x|^{2(\alpha + 1)}}{(\epsilon + |x|^{\alpha + 2})^{\frac{2(N + \alpha)}{\alpha + 2}}}dx\right] }
\]
and so
\begin{equation}\label{estimate gradient}
\gd{ \int_B |\nabla u_{\alpha, \epsilon}|^2 dx = \epsilon^{\frac{N-2}{\alpha + 2}}\left[O(1) + K_1 \epsilon^{- \frac{N-2}{\alpha + 2}}\right]=O(\epsilon^{\frac{N-2}{\alpha + 2}})+K_1.  }
\end{equation}

Now, since $\varphi \equiv 1$ in $B(0,1/2)$, we have
\begin{equation}\label{estimate critico}
\begin{split}
\int_B |u_{\alpha, \epsilon}|^{2^*_{\alpha}} |x|^{\alpha} dx  &= \epsilon^{\frac{N+\alpha}{\alpha + 2}}\int_B \frac{\varphi^{2^*_{\alpha}}(x) |x|^{\alpha}}{(\epsilon + |x|^{\alpha + 2})^{\frac{2(N + \alpha)}{\alpha + 2}}}dx\\ & = \epsilon^{\frac{N+\alpha}{\alpha + 2}}O(1) + \epsilon^{\frac{N+\alpha}{\alpha + 2}}\int_{\R^N} \frac{|x|^{\alpha}}{(\epsilon + |x|^{\alpha + 2})^{\frac{2(N + \alpha)}{\alpha + 2} }}dx \\
& = O(\epsilon^{\frac{N+\alpha}{\alpha + 2}}) + K_2'
\end{split}
\end{equation}
where $K_2'= K_2^{2^*_{\alpha}/2}$. Then observe that $\left(O(\epsilon^{\frac{N+\alpha}{\alpha + 2}}) + K_2'\right)^{2/2^*_{\alpha}} = K_2 + O(\epsilon^\frac{N+\alpha}{\alpha+2})$.

Now we estimate $\int_B|u_{\alpha, \epsilon}|^2|x|^{\alpha} dx$. We prove that
\begin{equation}\label{novaeq}
 \int_B |u_{\alpha, \epsilon}|^2 |x|^{\alpha} dx =
 \left\{
\begin{array}{l}
\gd{K_ 3 \, \epsilon + O(\epsilon^{\frac{N-2}{\alpha+ 2}}), \quad \text{if} \quad 0 < \alpha < N-4,} \\
\gd{ K_ 3 \, \epsilon \,| \log \epsilon| + O(\epsilon), \quad 0 < \alpha = N-4,} \\
\gd{  K_ 3 \, \epsilon^{\frac{N-2}{\alpha+2}}  + o(\epsilon^{\frac{N-2}{\alpha+2}}), \quad N-4 < \alpha},
\end{array}
\right.
\end{equation}
with, respectively, 
\[
K_3 = \int_{\R^N} \frac{|x|^{\alpha}}{(1 + |x|^{\alpha + 2})^{\frac{2(N-2)}{\alpha + 2}}}dx, \ \ K_3 = \frac{\omega_{N}}{N-2}, \ \ K_3 = \int_B \frac{\varphi^2(x) dx}{|x|^{2(N-2)-\alpha}} dx.
\] 
Indeed, since $\varphi\equiv 1$ in $B(0,1/2)$ we have
\begin{equation}\label{l2 geral}
\begin{split}
\int_B |u_{\alpha, \epsilon}|^2|x|^{\alpha} dx &=\epsilon^{\frac{N-2}{\alpha + 2}}\int_B \frac{\varphi^2(x) |x|^{\alpha}}{(\epsilon + |x|^{\alpha + 2})^{\frac{2(N-2)}{\alpha + 2}}} dx \\
&= \epsilon^{\frac{N-2}{\alpha + 2}}\left(O(1) + \int_B \frac{ |x|^{\alpha}}{(\epsilon + |x|^{\alpha + 2})^{\frac{2(N-2)}{\alpha + 2}}} dx\right).
\end{split}
\end{equation}

\medbreak

\noindent \textbf{Case $0\leq \alpha < N-4$.} In this case, from \eqref{l2 geral}, we have
\[
\begin{split}
\int_B |u_{\alpha, \epsilon}|^2|x|^{\alpha} dx &= O(\epsilon^{\frac{N-2}{\alpha + 2}}) +  \epsilon^{\frac{N-2}{\alpha + 2}}\int_{\R^N} \frac{ |x|^{\alpha}}{(\epsilon + |x|^{\alpha + 2})^{\frac{2(N-2)}{\alpha + 2}}} dx \\
& = O(\epsilon^{ \frac{N-2}{\alpha + 2}}) + \epsilon K_3 \, ,
\end{split}
\]
with $\gd{K_3 = \int_{\R^N} \frac{|x|^{\alpha}}{(1 + |x|^{\alpha + 2})^{\frac{2(N-2)}{\alpha + 2}}}dx}$. Then, from \eqref{estimate gradient} and \eqref{estimate critico}, we obtain
$$
\sup_{t\geq 0}J(tu_{\alpha, \epsilon})\leq \left(\frac{\alpha+2}{2(N+\alpha)}\right)(S_\alpha^\frac{N+\alpha}{\alpha+2} + O (\epsilon^{\frac{N-2}{\alpha + 2}}) - K_4 \epsilon),
$$
for some positive constant $K_4$, and we are done since $0 \leq \alpha < N-4$.
\medbreak

\noindent \textbf{Case $0 \leq \alpha = N-4$.} First, we have
\begin{equation}\label{l2 partial}
\begin{split}
\gd{\int_B \frac{ |x|^{\alpha}}{(\epsilon + |x|^{\alpha + 2})^{\frac{2(N-2)}{\alpha + 2}}} dx } &= \gd{\omega_{N} \int_0^1 \frac{r^{\alpha +N -1}}{(\epsilon + r^{\alpha + 2})^{\frac{2(N-2)}{\alpha + 2}}} dr } \\
&= \gd{\frac{\omega_{N}}{\alpha + 2}} \int_{\epsilon}^{1 + \epsilon} \left( \frac{z- \epsilon}{z^2} \right)^{\frac{N-2}{\alpha + 2}} dz.
\end{split}
\end{equation}
From \eqref{l2 partial} and using that $\alpha = N-4$ we get
\[
\gd{\int_B \frac{ |x|^{\alpha}}{(\epsilon + |x|^{\alpha + 2})^{\frac{2(N-2)}{\alpha + 2}}} dx  = \frac{\omega_{N}}{\alpha + 2} \int_{\epsilon}^{1 + \epsilon} \frac{z-\epsilon}{z^2} dz  = O(1) + \frac{\omega_{N}}{N-2} \,|\log \epsilon|}.
\]
Then,
\[
\gd{\int_B |u_{\alpha, \epsilon}|^2|x|^{\alpha} dx = O(\epsilon) + \frac{\omega_{N}}{N-2} \,\epsilon|\log \epsilon| ,}
\]
and at this time we have
$$
\sup_{t\geq 0}J(tu_{\alpha, \epsilon})\leq \left(\frac{\alpha+2}{2(N+\alpha)}\right)(S_\alpha^\frac{N+\alpha}{\alpha+2} + O (\epsilon) - K_4\epsilon|\log\epsilon|),
$$
for some positive constant $K_4$ and we are done.
\medbreak

\noindent\textbf{Case $N-4 < \alpha$.} In this case we can apply the dominated convergence theorem to obtain that
\[
\int_B \frac{\varphi^2(x) |x|^{\alpha}}{(\epsilon + |x|^{\alpha + 2})^{\frac{2(N-2)}{\alpha + 2}}} dx = \int_B \frac{\varphi^2(x) dx}{|x|^{2(N-2)-\alpha}} dx + o(1) 
\]
and then that
\begin{multline*}
\int_B |u_{\alpha, \epsilon}|^2|x|^{\alpha} dx =\epsilon^{\frac{N-2}{\alpha + 2}}\int_B \frac{\varphi^2(x) |x|^{\alpha}}{(\epsilon + |x|^{\alpha + 2})^{\frac{2(N-2)}{\alpha + 2}}} dx \\
= o(\epsilon^{\frac{N-2}{\alpha + 2}}) + \epsilon^{\frac{N-2}{\alpha + 2}}\int_B \frac{\varphi^2(x) dx}{|x|^{2(N-2)-\alpha}} dx.
\end{multline*}
In this case the inequality \eqref{majoracaolinear} is not suitable for our purposes. We emphasize that the inequality $N<\alpha +4$ corresponds to critical dimensions associated to problem \eqref{problem weight introduction}; compare with the critical dimension $N=3$ ($\alpha =0$) in the paper of Brezis and Nirenberg \cite{brezis-nirenberg}. Instead of \eqref{majoracaolinear} we use that
\begin{multline}\label{majoracaosuperlinear}
g(x,s)=(\widetilde{v}_{\alpha,{\rm rad}, \lambda}+s^++a)^{2^*_\alpha-1}-(\widetilde{v}_{\alpha,{\rm rad}, \lambda}+a)^{2^*_\alpha-1}\\\geq(s^+)^{2^*_\alpha-1}+(2^*_\alpha-1)(\widetilde{v}_{\alpha,{\rm rad}, \lambda}+a)(s^+)^{2^*_\alpha-2}.
\end{multline}
Consequently,
$$
J(tu_{\alpha,\epsilon})\leq\frac{t^2\|u_{\alpha,\epsilon}\|^2}{2}-\frac{t^{2^*_\alpha}}{2_\alpha^*}\int_B |x|^\alpha u_{\alpha,\epsilon}^{2_\alpha^*}-t^{2^*_\alpha-1}\int_B|x|^\alpha (\widetilde{v}_{\alpha,{\rm rad}, \lambda}+a)u_{\alpha,\epsilon}^{2^*_\alpha-1}
$$
and observe that $2^*_{\alpha} -2 < 2^*_{\alpha} -1 < 2^*_{\alpha}$; compare with \cite[eq. (0.6)]{brezis-nirenberg}. Since $\widetilde{v}_{\alpha,{\rm rad}, \lambda}\geq b_0>0$ on the support of $u_{\alpha,\epsilon}^2$, we deduce that
$$
J(tu_{\alpha,\epsilon})\leq\frac{t^2\|u_{\alpha,\epsilon}\|^2}{2}-\frac{t^{2^*_\alpha}}{2_\alpha^*}\int_B|x|^\alpha  u_{\alpha,\epsilon}^{2_\alpha^*}-\frac{t^2}{2}(2^*_\alpha-1)(b_0+a)\int_B |x|^\alpha u_{\alpha,\epsilon}^{2^*_\alpha-1}.
$$
Now, since $\varphi \equiv 1$ in $B(0,1/2)$, we have
\begin{multline}\label{estimate critico-1}
\int_B |u_{\alpha, \epsilon}|^{2^*_{\alpha}-1} |x|^{\alpha} dx  = \epsilon^{\frac{N+2(1+ \alpha)}{2(\alpha + 2)}}\int_{\R^N} \frac{\varphi^{2^*_{\alpha}-1}(x) |x|^{\alpha}}{(\epsilon + |x|^{\alpha + 2})^{\frac{N + 2(1+\alpha)}{\alpha + 2}}}dx\\ 
 = \epsilon^{\frac{N+2(1+\alpha)}{\alpha + 2}}\left[ O(1) + \int_{\R^N} \frac{|x|^{\alpha}}{(\epsilon + |x|^{\alpha + 2})^{\frac{N + 2(1+\alpha)}{\alpha + 2} }}dx\right] \\
 =\epsilon^{\frac{N+2(1+\alpha)}{\alpha + 2}} \left[ O(1) + \epsilon^{-1}\int_{\R^N} \frac{|x|^{\alpha}}{(1 + |x|^{\alpha+2})^{\frac{N+2(1+\alpha)}{\alpha + 2}}}dx\right] = O(\epsilon^{\frac{N+2(1+\alpha)}{\alpha + 2}}) + K_5 \epsilon^{\frac{N-2}{2(\alpha+2)}}.
\end{multline}
with $K_5 = \gd{\int_{\R^N} \frac{|x|^{\alpha}}{(1 + |x|^{\alpha+2})^{\frac{N+2(1+\alpha)}{\alpha + 2}}}}dx$. Then from \eqref{estimate gradient} and \eqref{estimate critico} we infer that
$$
\sup_{t\geq 0}J(tu_{\alpha, \epsilon})\leq \left(\frac{\alpha+2}{2(N+\alpha)}\right)(S_\alpha^\frac{N+\alpha}{\alpha+2} + O (\epsilon^{\frac{N-2}{\alpha+2}}) - K_4 \epsilon^{\frac{N-2}{2(\alpha+2)}}),
$$
for some positive constant $K_4$ and we are done.
%
\end{proof}

\begin{remark}
In the proof of Lemma {\rm \ref{subc0}} we could have used \eqref{majoracaosuperlinear} in all the cases $0 <\alpha < N-4$, $\alpha = N-4$ and $\alpha > N-4$. However, we decided also to use \eqref{majoracaolinear} to emphasize the critical dimensions $N \in [3, \alpha+4)$ associated to the equation \eqref{problem weight introduction}.
\end{remark}


%
%
%
%
%
\subsection{On the existence of a radial mountain pass solution to (\ref{problem weight introduction})} \label{secaoMP}
In Section \ref{duassol} we proved that \eqref{problem weight introduction} has two radial solutions for every $0 < \lambda < \lambda_*$ and for every $1 <p \leq 2^*_{\alpha} -1$. Here we guarantee the existence of a mountain pass solution in the case that $\lambda >0$ is sufficiently small.

\begin{proposition}\label{lemmaMP}
Let $1< p$ and in case $N\geq 3$ also assume $p < 2^*_{\alpha} -1$. Then there exists $\lambda_{0} = \lambda_{0}(N,p) \in (0, \lambda_*)$, such that for every $0 < \lambda < \lambda_{0}$ the functional $J_{\lambda, {\rm rad}}$ has a mountain pass solution associated to its local minimum $\widetilde{v}_{\alpha,{\rm rad}, \lambda}$, with $\widetilde{v}_{\alpha,{\rm rad}, \lambda}$ as in Proposition \ref{prop energia negativa}. We emphasize that $\lambda_{0}(N,p)$ does not depend on $\alpha$.
\end{proposition}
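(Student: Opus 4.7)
The plan is to verify the mountain pass geometry around the local minimum $\widetilde{v}_{\alpha,{\rm rad},\lambda}$ furnished by Proposition \ref{prop energia negativa}, making that minimum quantitatively strict for $\lambda$ small enough (uniformly in $\alpha$), and then invoke the classical mountain pass theorem in $H^1_{0,{\rm rad}}(B)$. In the terminology of Section \ref{subcritical section} this amounts to showing that Case 1 of the alternative always occurs when $0<\lambda<\lambda_0(N,p)$.

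First I would establish a uniform-in-$\alpha$ coercivity for the second variation. With $a=\lambda^{1/(p-1)}$, Proposition \ref{sol minimais} gives $\|\widetilde{v}_{\alpha,{\rm rad},\lambda}+a\|_{\infty}\to 0$ as $\lambda\to 0^+$, uniformly in $\alpha$. A direct computation yields
\[
J_{\lambda,{\rm rad}}''(\widetilde{v}_{\alpha,{\rm rad},\lambda})[h,h]=\int_B|\nabla h|^2\,dx-p\int_B|x|^\alpha(\widetilde{v}_{\alpha,{\rm rad},\lambda}+a)^{p-1}h^2\,dx,
\]
and combining $|x|^\alpha\le 1$ on $B$ with Poincaré's inequality one obtains $J_{\lambda,{\rm rad}}''(\widetilde{v}_{\alpha,{\rm rad},\lambda})[h,h]\ge\tfrac12\|h\|^2$ for every $0<\lambda<\lambda_0(N,p)$, with the threshold $\lambda_0$ independent of $\alpha$. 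A Taylor expansion around the critical point $\widetilde{v}_{\alpha,{\rm rad},\lambda}$, whose remainder is controlled by $\|h\|^{p+1}$ through the subcritical weighted embedding $H^1_{0,{\rm rad}}(B)\hookrightarrow L^{p+1}(B,|x|^\alpha)$, then produces some $\rho_\lambda>0$ with
\[
J_{\lambda,{\rm rad}}(\widetilde{v}_{\alpha,{\rm rad},\lambda}+h)\ge J_{\lambda,{\rm rad}}(\widetilde{v}_{\alpha,{\rm rad},\lambda})+\tfrac14\|h\|^2,\quad\|h\|\le\rho_\lambda.
\]

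Next I would construct a far endpoint and apply the mountain pass theorem. Fix any nonnegative $\phi\in C_{c,{\rm rad}}^\infty(B)\setminus\{0\}$. Since $p>1$, the dominant term $-\tfrac{t^{p+1}}{p+1}\int_B|x|^\alpha\phi^{p+1}\,dx$ forces $J_{\lambda,{\rm rad}}(t\phi)\to-\infty$ as $t\to+\infty$, so one can pick $t_\lambda$ large enough that $\|t_\lambda\phi-\widetilde{v}_{\alpha,{\rm rad},\lambda}\|>\rho_\lambda$ and $J_{\lambda,{\rm rad}}(t_\lambda\phi)<J_{\lambda,{\rm rad}}(\widetilde{v}_{\alpha,{\rm rad},\lambda})$. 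Setting
\[
\Gamma=\bigl\{\gamma\in C([0,1],H^1_{0,{\rm rad}}(B)):\gamma(0)=\widetilde{v}_{\alpha,{\rm rad},\lambda},\ \gamma(1)=t_\lambda\phi\bigr\},\ \ c_\lambda=\inf_{\gamma\in\Gamma}\max_{s\in[0,1]} J_{\lambda,{\rm rad}}(\gamma(s)),
\]
the previous step gives $c_\lambda\ge J_{\lambda,{\rm rad}}(\widetilde{v}_{\alpha,{\rm rad},\lambda})+\tfrac14\rho_\lambda^2$, which is the mountain pass geometry strictly above the local minimum. The subcriticality $p<2^*_\alpha-1$ makes the weighted embedding compact and hence $J_{\lambda,{\rm rad}}$ satisfies the (PS) condition at every level. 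The mountain pass theorem, combined with the sign argument of Section \ref{subcritical section} via \cite[Theorem 2.8]{willem}, then produces a positive mountain pass critical point $V_{\alpha,\lambda,{\rm rad}}$ at level $c_\lambda$, which by Theorem \ref{teo regularidade radial} is a classical radial solution distinct from $\widetilde{v}_{\alpha,{\rm rad},\lambda}$.

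The main obstacle is preserving the $\alpha$-independence of $\lambda_0$. This is secured by the uniform-in-$\alpha$ asymptotic of Proposition \ref{sol minimais}, which bounds $\|\widetilde{v}_{\alpha,{\rm rad},\lambda}\|_\infty$ without any $\alpha$-factor, together with $|x|^\alpha\le 1$ on $B$, so the coercivity inequality for the second variation involves only the first Dirichlet eigenvalue of $-\Delta$ on $B$. The constants entering the Taylor remainder and the choice of $t_\lambda$ may depend on $\alpha$, but only through the size of $\rho_\lambda$ and $t_\lambda$; since we merely need $\rho_\lambda>0$ and $t_\lambda<\infty$ to run the mountain pass argument, this residual $\alpha$-dependence is harmless.
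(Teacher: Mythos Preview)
Your argument is correct, but it takes a genuinely different route from the paper's. The paper does not use the second variation at $\widetilde{v}_{\alpha,{\rm rad},\lambda}$; instead it argues perturbatively by comparing $J_{\lambda,{\rm rad}}$ with the $\lambda=0$ functional $J_{0,{\rm rad}}$. Concretely, the paper shows (i) $\|\widetilde{v}_{\alpha,{\rm rad},\lambda}\|\le C\lambda^{(p+1)/2(p-1)}$ via \eqref{barra comportamento}, (ii) a closeness estimate $|J_{\lambda,{\rm rad}}(v)-J_{0,{\rm rad}}(v)|\le C\lambda^{1/(p-1)}(\|v\|^p+1)$ with $C=C(N,p)$ obtained from Ni's pointwise inequality \eqref{des ni regularidade}, which is what makes the embedding constant of $H^1_{0,{\rm rad}}(B)\hookrightarrow L^{p+1}(B,|x|^\alpha)$ uniform in $\alpha$, and (iii) the known mountain pass geometry of $J_{0,{\rm rad}}$ at the origin: $J_{0,{\rm rad}}(v)\ge\epsilon$ for $\|v\|=r$ with $\epsilon,r$ depending only on $(N,p)$. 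Combining these yields a barrier sphere of radius $r=r(N,p)$ and height $\epsilon/2$ around $\widetilde{v}_{\alpha,{\rm rad},\lambda}$, both independent of $\alpha$. Your approach instead secures the $\alpha$-independence of $\lambda_0$ through the much softer observation $|x|^\alpha\le 1$ and Poincar\'e, but at the price that your $\rho_\lambda$ depends on $\alpha$ (through the embedding constant in the Taylor remainder). For the bare statement of the proposition this is harmless, as you note; however, the paper's uniform-in-$\alpha$ comparison \eqref{independente2} is reused later (cf.\ Remark \ref{remark multiple solutions supercritical}) to control mountain pass levels as $\lambda\to0^+$, so that estimate carries information your argument does not recover.

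Two minor points: the uniform $L^\infty$-smallness of $\widetilde{v}_{\alpha,{\rm rad},\lambda}+a$ should be cited from \eqref{barra comportamento} in Proposition \ref{prop energia negativa} (not from Proposition \ref{sol minimais}, which treats the minimal solution $v_{\alpha,\lambda}$, a priori a different object). Also, when $p>2$ the Taylor remainder of $t\mapsto|t|^{p+1}$ produces an additional term of order $\|\widetilde{v}+a\|_\infty^{p-2}\int_B|x|^\alpha|h|^3\,dx$, not only $\int_B|x|^\alpha|h|^{p+1}\,dx$; this is still $o(\|h\|^2)$ by H\"older and the weighted embedding, but it deserves a line.
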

\begin{proof}
First we recall that
\[
e_\alpha < e_0 \quad \text{in} \quad B \quad \forall \ \alpha >0.
\]
From Proposition \ref{prop energia negativa}, more precisely from \eqref{barra comportamento} (cf. Lemma \ref{upper zero}), we infer that there exists $\lambda_0 = \lambda_0(N,p)<1$ such that
\[
\widetilde{v}_{\alpha,{\rm rad}, \lambda} \leq 2 \lambda^{p/(p-1)} e_0 \quad \forall \ \lambda  \in (0, \lambda_0).
\]
Then, taking into account that $a = \lambda^{1/(p-1)}$, we get that
\begin{multline}\label{dentro}
\int_B |\nabla \widetilde{v}_{\alpha,{\rm rad}, \lambda}|^2 dx  = \int_B |x|^{\alpha} (\widetilde{v}_{\alpha,{\rm rad}, \lambda} + a)^{p}\widetilde{v}_{\alpha,{\rm rad}, \lambda}  dx \\ \leq \int_B  |\widetilde{v}_{\alpha,{\rm rad}, \lambda} + a|^{p+1} dx \leq C \lambda^{(p+1)/(p-1)} \quad \forall \ \lambda  \in (0, \lambda_0).
\end{multline}

Now observe that
\begin{multline}\label{independente2}
|J_{\lambda, {\rm rad}}(v) - J_{0, {\rm rad}}(v)| = \frac{1}{p+1}\left| \int_B |x|^{\alpha} |v+ a|^{p+1} - |x|^{\alpha}|v|^{p+1} dx \right| \\ \leq 2^{p-1} a \int_B |x|^{\alpha}(|v|^p + a^{p}) dx \leq 2^{p-1} \lambda^{1/(p-1)} \left( C \|v\|^p + \lambda^{p/(p-1)} |B|\right) 
\end{multline}
with $C= C(N,p)$ is such that
\begin{equation}\label{indepalpha}
\int_B |u|^{p}|x|^{\alpha} dx \leq C(N,p) \left(\int_B |\nabla u |^2 dx\right)^{p/2}, \quad \forall \, u \in  H^1_{0,\rm{rad}}(B).
\end{equation}
Observe that $C(N,p)$ may be taken independently of $\alpha$. Indeed, for every $u \in H^1_{0,\rm{rad}}(B)$, we have from Ni's pointwise estimate \eqref{des ni regularidade} that
\begin{multline*}
\left(\int_B |u|^{2^*_{\alpha}} |x|^{\alpha} dx\right)^{1/2^*_{\alpha}} = \left(\int_B |u|^{2^{*}} |u|^{\frac{2\alpha}{N-2}}|x|^{\alpha}\right)^{1/2^*_{\alpha}} dx \\ \leq \left( \| \nabla u \|_2^{\frac{2\alpha}{N-2}}\frac{1}{(\omega_{N-1}(N-2))^{\frac{\alpha}{N-2}}} \int_B |u|^{2^*} dx \right)^{1/2^*_{\alpha}}\\
\leq \left( \| \nabla u \|_2^{2^*_{\alpha}} \frac{1}{(\omega_{N-1}(N-2))^{\frac{\alpha}{N-2}}} S^{\frac{N}{N-2}}\right)^{1/2^*_{\alpha}} = \frac{S^{\frac{N}{2(N+\alpha)}}}{\left( (N-2) \omega_{N-1} \right)^\frac{\alpha}{2(N+\alpha)}} \left(\int_B |\nabla u|^2 dx \right)^{1/2}.
\end{multline*}
Then take into account that the constants for the embeddings $L^{2^*_{\alpha}}(B, |x|^{\alpha}) \hookrightarrow L^{p}(B, |x|^{\alpha})$ can be bounded from above uniformly with respect to $\alpha$.

Then, we recall that 
\[
J_{0, {\rm rad}}(v) = \frac{1}{2} \int_B |\nabla v|^2 dx  - \frac{1}{p+1}\int_B |x|^{\alpha} |v|^{p+1} dx
\]
has a strict minimum at $v=0$. Moreover, using again \eqref{indepalpha}, now with $p+1$ in place of $p$, there exists $\epsilon(N, p)> 0$ and $r = r(N,p)>0$ such that
\[
J_{0, {\rm rad}}(v) \geq \epsilon \quad \forall \ v \in H^1_{0, \rm{rad}}(B) \ \ \text{s.t.} \ \ \|v \| = r. 
\]
Hence, combining the last inequality with \eqref{dentro} and \eqref{independente2}, there exists $0< \widetilde{\lambda}_{0} \leq \lambda_0$,  with $\widetilde{\lambda}_{0} = \widetilde{\lambda}_{0}(N,p)$, such that for every $0 < \lambda < \widetilde{\lambda}_{0}$:
\begin{align*}
&\|\widetilde{v}_{\alpha,{\rm rad}, \lambda}\| < \frac{r}{2}, \quad J_{\lambda, {\rm rad}}(\widetilde{v}_{\alpha,{\rm rad}, \lambda}) <0, \ \ \text{and}\\
&J_{\lambda, {\rm rad}}(v) > \frac{\epsilon}{2} \quad \forall \ v \in H^1_{0, \rm{rad}}(B) \ \ \text{s.t.} \ \ \|v \| = r.
\end{align*}
It is also clear that $J_{\lambda, {\rm rad}}(R \, \widetilde{v}_{\alpha,{\rm rad}, \lambda})< 0$ for $R> 0$ sufficiently large. Therefore, since $J_{\lambda, {\rm rad}}$ satisfies the $(PS)_c$ condition at every level $c$, we apply the standard version of the mountain pass lemma \cite{ambrosetti-rabinowitz}. \qedhere
\end{proof}

\subsection{Partially symmetric mountain pass solutions}
Assume all the hypotheses from Proposition \ref{prop energia negativa l}. So,

\[
J_{\lambda, l}(v) = \int_B |\nabla u |^2 dx - \frac{1}{p+1} \int_B |x|^{\alpha} | v + a|^{p+1} dx, \quad v \in H_{l}(B), \quad a^{p-1} = \lambda,
\]
has a local minimum at $\widetilde{v}_{\alpha, l , \lambda}$ and a mountain pass solution in the case that $0 < \lambda < \lambda_0(N,p)$; cf. Section \ref{secaoMP}. At this time, to see that $\lambda_0(N,p)$ can be taken independent of $\alpha$ and $l$ we refer to \cite[Corollary 2.3]{badiale-serra}. As before, we prove that any mountain pass solution of $J_{\lambda, l}$ associated to the mountain pass level
\[
\begin{array}{l}
\gd{m_{\alpha,\lambda,l} = \inf_{\gamma \in \Gamma_{\alpha, \lambda, l}} \max_{t \in [0,1]} J_{l}(\gamma(t))},\\\\
\gd{\Gamma_{\alpha, \lambda, l} = \{ \gamma \in C([0,1], H_{l}(B)); \gamma(0) =\widetilde{v}_{\alpha, l , \lambda} \,\, \text{and} \,\, \gamma(1) = v_0\}},
\end{array}
\]
is positive in $B$.  Take $V_{\alpha, \lambda, l}$ a mountain pass critical point of $J_{l}$ associated to the mountain pass level $m_{\alpha,a, l}$.

%

\subsection{Solutions in the space $H^1_0(B)$}
Here we suppose $1 < p < 2^* -1$. So, by Proposition \ref{prop energia negativa l},
\[
J_{\lambda}(v) = \int_B |\nabla u |^2 dx - \frac{1}{p+1} \int_B |x|^{\alpha} | v + a|^{p+1} dx, \quad v \in H^1_{0}(B),\quad a^{p-1} = \lambda,
\]
has a local minimum at $\widetilde{v}_{\alpha, \lambda}$ and a mountain pass solution in the case that $0< \lambda < \lambda_0(N,p)$; cf. Section \ref{secaoMP}.  At this time, to see that $\lambda_0(N,p)$ can be taken independent of $\alpha$ we can use that
\[
\int_B |v|^{p+1} |x|^{\alpha} dx \leq \int_B |v|^{p+1} dx
\]
and we use the classical Sobolev embeddings. As before, we prove that any mountain pass solution of $J_{\lambda}$ associated to the mountain pass level
\[
\begin{array}{l}
\gd{m_{\alpha, \lambda} = \inf_{\gamma \in \Gamma_{\alpha, \lambda}} \max_{t \in [0,1]} J(\gamma(t))},\\\\
\gd{\Gamma_{\alpha, \lambda} = \{ \gamma \in C([0,1], H^1_{0}(B)); \gamma(0) =\widetilde{v}_{\alpha, \lambda} \,\, \text{and} \,\, \gamma(1) = v_0\}},
\end{array}
\]
is positive in $B$.  Take $V_{\alpha, \lambda}$ a mountain pass critical point of $J$ associated to the mountain pass level $m_{\alpha,\lambda}$.

In case $N \geq 2$, arguing as in \cite[Proposition 3.1]{squassina-vanschaftingen}, we can prove that for each closed half-space $H$ in $\R^N$, the polarized function
\[
V_{\alpha, \lambda}^{H} = \left\{
\begin{array}{l}
\max\{ u, u \circ \sigma_H \} \quad \text{on} \quad H \cap B,\\
\min\{ u, u \circ \sigma_H \} \quad \text{on} \quad \text (\R^N \menos H)  \cap B,
\end{array}
\right.
\]
is also a solution of \eqref{equiv1} associated to the critical level $m_{\alpha, \lambda}$. Then we argue as in \cite[Lemmas 16 and 17]{berchio-gazzola-weth} to prove that $V_{\alpha, \lambda}$ is Schwarz foliated symmetric indeed.

\subsection{Proof of Theorem \ref{teorema multiplicidade introduction} (II) and (III)}

\begin{proposition}\label{teorema de multiplicidade niveis}
Let $N \geq 1$, $\alpha >0$ and $0 < \gamma < \min\{1, \alpha\}$.
\begin{enumerate}[(i)]
\item If $1 < p < 2^*_{\alpha} -1$, then
\[
\lim_{\lambda \rt 0^+}  V_{\alpha, \lambda,{\rm rad}} = V_{\alpha, {\rm rad}} \quad \text{in} \quad C^{2,\gamma}(\overline{B}).
\]
\item If $1 < p < 2^* -1$, then
\[
 \lim_{\lambda \rt 0^+}  V_{\alpha,\lambda,l} = V_{\alpha, l} \quad \text{and} \quad
 \lim_{\lambda \rt 0} V_{\alpha, \lambda} = V_{\alpha} \quad \text{in} \quad C^{2,\gamma}(\overline{B}).
 \]
\end{enumerate}
Here $V_{\alpha, {\rm rad}}$, $V_{\alpha,l}$ and $V_{\alpha}$ are mountain pass critical points of $J_{0, {\rm rad}}:H^1_{0, {\rm rad}}(B)$, $J_{0, l}:  H_{l}(B) \rt \R$ and $J_0: H^1_0(B) \rt \R$ respectively, associated to the problem
 \[
 -\Delta Z =  |x|^{\alpha} |Z|^{p-1} Z \quad \text{in} \quad B, \quad Z = 0 \quad \text{on} \quad \partial B.
 \]
 We also have the respective convergence of the mountain pass levels.
 \end{proposition}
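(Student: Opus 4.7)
All three convergences are proved by the same argument, which I sketch for $V_{\alpha,\lambda,{\rm rad}}\to V_{\alpha,{\rm rad}}$; the other two cases substitute Theorems~\ref{HXC partial introduction} and \ref{teo regularidade l} (respectively \cite{brezis-nirenbergHXC} together with the classical Sobolev embeddings) for the two radial ingredients. Write $a_\lambda=\lambda^{1/(p-1)}\to 0^+$ and abbreviate $V_\lambda=V_{\alpha,\lambda,{\rm rad}}$, $m_\lambda=m_{\alpha,\lambda,{\rm rad}}$, $\widetilde v_\lambda=\widetilde v_{\alpha,{\rm rad},\lambda}$, and denote by $m_0$ the mountain-pass level for $J_{0,{\rm rad}}$ relative to its local minimum at $0$. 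My first task is to secure uniform bounds. Fixing a radial $v_*$ with $J_{0,{\rm rad}}(v_*)<-1$ and a near-optimal path $\gamma\in C([0,1];H^1_{0,{\rm rad}}(B))$ from $0$ to $v_*$ for $m_0$, one evaluates $J_{\lambda,{\rm rad}}$ along the admissible path $t\mapsto\gamma(t)+(1-t)\widetilde v_\lambda$ and combines \eqref{barra comportamento} with the $\alpha$-uniform closeness estimate \eqref{independente2} to obtain $\limsup_{\lambda\to 0^+}m_\lambda\le m_0$. The matching lower bound $m_\lambda\ge \varepsilon/2>0$ is immediate from the sphere estimate in Proposition~\ref{lemmaMP}, because $\|\widetilde v_\lambda\|<r/2$ forces every admissible path to cross $\|v\|=r$. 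Finally, choosing $\theta\in(2,p+1)$ and forming $\theta J_{\lambda,{\rm rad}}(V_\lambda)-\langle J_{\lambda,{\rm rad}}'(V_\lambda),V_\lambda\rangle=\theta m_\lambda$ yields
\[
\Bigl(\tfrac{\theta}{2}-1\Bigr)\|V_\lambda\|^2=\theta m_\lambda+\int_B|x|^\alpha(V_\lambda+a_\lambda)^p\Bigl[\Bigl(1-\tfrac{\theta}{p+1}\Bigr)V_\lambda-\tfrac{\theta a_\lambda}{p+1}\Bigr]dx,
\]
where the bracket is negative only on the set $\{V_\lambda\le \theta a_\lambda/(p+1-\theta)\}$, there contributing $O(a_\lambda^{p+1})$; hence $\|V_\lambda\|\le C$ uniformly.

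Next I extract a subsequence with $V_\lambda\rightharpoonup V_0$ in $H^1_{0,{\rm rad}}(B)$. Under the subcritical hypothesis $p<2^*_\alpha-1$, the embedding $H^1_{0,{\rm rad}}(B)\hookrightarrow L^{p+1}(B,|x|^\alpha)$ is compact, so $V_\lambda\to V_0$ strongly in that space. Passing to the limit in the test-function identity $\int\nabla V_\lambda\nabla\phi\,dx=\int|x|^\alpha(V_\lambda+a_\lambda)^p\phi\,dx$ for $\phi\in H^1_{0,{\rm rad}}(B)$ shows that $V_0\ge 0$ weakly solves $-\Delta V_0=|x|^\alpha V_0^p$ in $B$, and choosing $\phi=V_\lambda-V_0$ upgrades the convergence to strong convergence in $H^1_{0,{\rm rad}}(B)$. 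To promote it to $C^{2,\gamma}$, I apply Theorem~\ref{teo regularidade radial} and Remark~\ref{classical regularity radial}: each $V_\lambda$ lies in $W^{2,t}(B)\cap C^{2,\gamma}(\overline B)$, and tracing through the Brezis-Kato step of the proof of Theorem~\ref{teo regularidade radial} shows that the $W^{2,t}$-bounds depend only on $\|V_\lambda\|_{H^1}$ and on the bounded parameter $a_\lambda$. Arzela-Ascoli then yields $V_\lambda\to V_0$ in $C^{1,\theta}(\overline B)$ for every $0<\theta<1$, and Schauder estimates applied to the difference equation $-\Delta(V_\lambda-V_{\lambda'})=|x|^\alpha[(V_\lambda+a_\lambda)^p-(V_{\lambda'}+a_{\lambda'})^p]$ promote this to convergence in $C^{2,\gamma}(\overline B)$.

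It remains to identify $V_0$ as a mountain-pass solution of $J_{0,{\rm rad}}$ and to deduce the level convergence. Since $J_{0,{\rm rad}}(V_0)=\lim m_\lambda\ge \varepsilon/2>0$ we have $V_0\not\equiv 0$, hence $V_0>0$ by the strong maximum principle. As a positive critical point of $J_{0,{\rm rad}}$, the fibering $t\mapsto J_{0,{\rm rad}}(tV_0)=\tfrac{t^2}{2}\|V_0\|^2-\tfrac{t^{p+1}}{p+1}\int_B|x|^\alpha V_0^{p+1}dx$ attains its maximum on $[0,\infty)$ precisely at $t=1$, so the path $t\in[0,R]\mapsto tV_0$ (with $R$ large enough that $J_{0,{\rm rad}}(RV_0)<0$) certifies $m_0\le J_{0,{\rm rad}}(V_0)=\lim m_\lambda$; combined with the $\limsup$-bound from the first paragraph this gives $\lim m_\lambda=m_0$ and identifies $V_0$ with a mountain-pass critical point $V_{\alpha,{\rm rad}}$ of $J_{0,{\rm rad}}$. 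I expect the uniform $H^1$-bound on $V_\lambda$ to be the main obstacle: the shift by $a_\lambda$ destroys exact homogeneity and forces one to show that the induced correction in the Ambrosetti-Rabinowitz identity is of order $a_\lambda^{p+1}$ and therefore harmless as $\lambda\to 0^+$; the partially-symmetric case demands the extra care of keeping all estimates uniform in $l$ as well.
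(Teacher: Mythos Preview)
Your argument is correct and complete, but it takes a route genuinely different from the paper's. The paper does not obtain the uniform bound on the mountain-pass solutions variationally; instead it imports \emph{a priori} $L^\infty$ estimates for \emph{all} positive solutions of \eqref{equiv1} from the Liouville-type results of Phan--Souplet \cite{phan-souplet} (for radial solutions in case~(i), for arbitrary solutions in case~(ii)), and then defers to the convergence arguments of Gazzola \cite{gazzola} and Gazzola--Malchiodi \cite{gazzola-malchiodi}. Your approach is more self-contained: you bound $\|V_\lambda\|$ directly from the bounded mountain-pass level via an Ambrosetti--Rabinowitz identity, then use the compact embedding $H^1_{0,{\rm rad}}(B)\hookrightarrow L^{p+1}(B,|x|^\alpha)$ and the internal regularity machinery of Section~\ref{regularity results} to reach $C^{2,\gamma}$. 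The trade-off is that the blow-up/Liouville route bounds every positive solution at once and makes explicit the $\alpha$-dependence stressed in the paper's proof, whereas your method is tailored to the specific mountain-pass critical points---which is all the proposition asserts---and avoids citing heavy external machinery.

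One correction: the displayed identity has a sign error. Computing $\theta J_{\lambda,{\rm rad}}(V_\lambda)-\langle J_{\lambda,{\rm rad}}'(V_\lambda),V_\lambda\rangle$ gives
\[
\Bigl(\tfrac{\theta}{2}-1\Bigr)\|V_\lambda\|^2=\theta m_\lambda-\int_B|x|^\alpha(V_\lambda+a_\lambda)^p\Bigl[\Bigl(1-\tfrac{\theta}{p+1}\Bigr)V_\lambda-\tfrac{\theta a_\lambda}{p+1}\Bigr]dx,
\]
with a minus sign before the integral. With this sign your reasoning goes through: the bracket is positive except on $\{V_\lambda\le \theta a_\lambda/(p+1-\theta)\}$, the positive-bracket contribution may be dropped, and the remaining piece is $O(a_\lambda^{p+1})$, yielding $\|V_\lambda\|\le C$ as you claim.
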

\begin{proof}
Here we can closely follow the proof of \cite[Theorem 2]{gazzola} and \cite[Theorem 6]{gazzola-malchiodi}, which are based on a priori estimates for positive solution of \eqref{equiv1} for $0 \leq \lambda < \lambda_*$. Since such arguments are indeed very similar to those in the proof of  \cite[Theorem 2]{gazzola} and \cite[Theorem 6]{gazzola-malchiodi}, we omit them here. 

We stress that, in the case (i), the a priori estimates for radial positive solutions of \eqref{equiv1} with $1 < p < 2^{*}_{\alpha}-1$ follows from \cite[p. 2529, Case 2]{phan-souplet}; observe that $P_k=0$, for every $k$, for radial solutions. The a priori estimates in the case (ii) (for all positive solutions of \eqref{equiv1}) is presented in \cite[Theorem 1.3]{phan-souplet}.  Moreover, these a priori estimates depends on $N$, $p$ and $\alpha$. Since these a priori estimates depend on $\alpha$ we see from \eqref{independente2} that the convergence of the mountain pass levels (as $\lambda\rt 0^+$) also depends on $\alpha$.
\end{proof}

\begin{proof}[\textbf{Proof of Theorem {\rm \ref{teorema multiplicidade introduction}} (II) and (III) completed}]
For each $N \geq 1$, the mountain pass levels of $V_{\alpha, {\rm rad}}$, $V_{\alpha,l}$ and $V_{\alpha}$ are different, provided $\alpha > \alpha_0(N, p)$; see \cite{smets-su-willem, byeon-wang, badiale-serra}. Then, for every $\alpha > \alpha_0(N, p)$,  we obtain from Propositions \ref{lemmaMP}  and \ref{teorema de multiplicidade niveis} that there exists $\lambda_0 = \lambda_0(N,p,\alpha)$ such that for every $0< \lambda < \lambda_0$ the solutions $V_{\alpha, \lambda, {\rm rad}}$, $V_{\alpha, \lambda,l}$ and $V_{\alpha, \lambda}$ are non rotational equivalent, because they have different positive energy levels. Observe that $\lambda_0$ depends on $\alpha$ because, as we explained in the proof of Proposition \ref{teorema de multiplicidade niveis}, the convergence of the mountain pass levels may depend on $\alpha$. Then, for every $N\geq 1$, $V_{\alpha, \lambda, {\rm rad}}$,  $V_{\alpha, \lambda}$ and $\widetilde{v}_{\alpha, \lambda, {\rm rad}}$, with $\widetilde{v}_{\alpha, \lambda, {\rm rad}}$ from Proposition \ref{prop energia negativa}, produce three non rotational equivalent solutions of \eqref{problem weight introduction}. In addition, $V_{\alpha, \lambda}$ is not radially symmetric, and in case $N\geq 2$, $V_{\alpha, \lambda}$ is Schwarz foliated symmetric.

Using the same arguments, in case $N \geq 4$, $V_{\alpha, \lambda,l}$ and $V_{\alpha, \lambda,j}$ are non rotational equivalent if $j\neq l$; see \cite{badiale-serra, li} for the limit problem with $\lambda=0$. Then we get the existence of at least $[N/2] + 2$ non rotational equivalent solutions for \eqref{problem weight introduction}, since we have $\left[ \frac{N}{2}\right] -1$ choices of $l \in \Z$ such that $2 \leq N - l \leq l$. 
\end{proof}

\begin{remark}\label{remark multiple solutions supercritical}
Assume $N\geq 4$, $l \in \N$, $2 \leq N- l \leq l$ and $2 < p+1 < \frac{2(l+1)}{l-1}$. Then, as we argued in the proof of Theorem {\rm \ref{teorema multiplicidade introduction}} (III), there exist  $\alpha_0(N, p)>0$ and $\lambda_0 = \lambda_0(N,p,\alpha)$ such that for all $\alpha > \alpha_0(N, p)$ and $0< \lambda < \lambda_0$ the solutions $V_{\alpha, \lambda, {\rm rad}}$, $V_{\alpha, \lambda,l}$ have distinct positive critical levels and $\widetilde{v}_{\alpha, \lambda, {\rm rad}}$ has a negative critical level.  Therefore we get the existence of at least three solutions to \eqref{problem weight introduction}. Observe that an estimate like \eqref{independente2}, involving $J_{\lambda,l}$ and $J_{0,l}$, can be used to prove uniformly bound of the mountain pass level, and so of the mountain pass solutions, as $\alpha \rt 0^+$. Finally, observe that the condition $2 < p+1 < \frac{2(l+1)}{l-1}$ includes cases with $p+1 \geq 2^*$. 
\end{remark}

\section{A weighted problem posed in an exterior domain} \label{section related problems} Here we consider $N \geq 3$, $a\geq0$, $\beta \in \R$ and $p>0$ and the problem
\begin{equation}\label{exterior}
 \left\{
\begin{array}{l}
 -\Delta U = \gd{\frac{U^p}{|x|^{\beta}}} \quad \text{in} \quad \R^N \menos \overline{B}, \quad U  >  0 \quad \text{in} \quad \R^N \menos \overline{B},\\
 U  =  a \quad \text{on} \quad \partial B,  \quad U  \rt 0 \quad \text{as} \quad |x| \rt \infty.
\end{array}
 \right.
\end{equation}

If $U : \R^N \menos B \rt \R$, then the Kelvin transform $u(x) = U\left( \frac{x}{|x|^2} \right) |x|^{2-N}$ is such that $u : \overline{B} \menos \{ 0\} \rt \R$ and
\[
\Delta u (x) = |x|^{-N-2}\Delta U \left( \frac{x}{|x|^2} \right).
\]
On the other hand, if $u : \overline{B} \rt \R$ is a $C(\overline{B}) \cap C^2(B)$, then $U(x) = u\left( \frac{x}{|x|^2} \right) |x|^{2-N}$ is such that, $U: \R^N \menos B \rt \R$ is continuous, $U \in C^2(\R^N \menos \overline{B})$ and \linebreak $\lim_{|x| \rt \infty} U(x)|x|^{N-2} = u(0)$.

So, if we search for a solution of \eqref{exterior}, let $u(x) = U\left( \frac{x}{|x|^2} \right) |x|^{2-N}$. Then we are led to study the following problem
\begin{equation}\label{equiv3}
 \left\{
 \begin{array}{l}
 -\Delta u  = | x|^{-N - 2 + \beta + p(N-2)} u^p \quad \text{in} \quad B,\\
 u  = a \quad \text{on} \quad \partial B, \ \  u  > 0 \quad \text{in} \quad B,
\end{array}
 \right.
\end{equation}
which is similar to problem \eqref{boundary inhomogeneous}.
\medbreak

\noindent \textbf{The case with $a=0$.} As a consequence of \cite{ni, smets-su-willem, byeon-wang}:
\begin{enumerate}[(i)]
\item If $\beta \leq 0$, then there exists at least one positive radial $U$ for \eqref{exterior} for all $p > \frac{N + 2 -2\beta}{N-2}$.

\item Consider $0 < \beta \leq \frac{N+2}{2}$. Then there exists at least one positive radial $U$ for \eqref{exterior} for all $p \geq \frac{N + 2 -\beta}{N-2}$ and $p \neq 1$.

\item Consider $\beta > \frac{N+2}{2}$. Then there exists at least one positive radial solution $U$ for \eqref{exterior} for all $p> 0$ and $p\neq 1$.
 \item Consider $N=1,2$, $p> 1$ and $\beta > 0$ large. Then \eqref{exterior} has a least two non rotational equivalent solutions.
\item Consider $N \geq 3$, $1 < p< 2^* -1$ and $\beta> 0$ large. Then \eqref{exterior} has at least $\left[ \frac{N}{2}\right] + 1$ non rotational equivalent solutions.
\end{enumerate}

\medbreak
\noindent \textbf{The case with $a>0$.}
\begin{enumerate}[(i)]
\item In case $p> 1$ and $\beta \geq N+2 - p (N-2)$. Then \eqref{exterior} has a solution if, and only if, $a$ is suitably small; see Theorem \ref{teorema existencia introduction}.
\item In case $\beta \leq 0$ and $p > \frac{N + 2 -2\beta}{N-2}$, then \eqref{exterior} has at least two radial solutions in case $a>0$ is suitably small; see Theorem \ref{teorema multiplicidade introduction} (I).
\item In case $\beta> 0$ is large, $a>0$ is suitably small and $1 < p < 2^* -1$,  then Theorem \ref{teorema multiplicidade introduction} (II) and (III) apply to prove the existence of multiple positive solutions to \eqref{exterior}.
\end{enumerate}

%

\section*{Acknowledgments}

\noindent Leonelo Iturriaga has been partially supported by  Programa Basal PFB 03, CMM,
U. de Chile; Fondecyt grant 1120842 and USM grant No. 12.12.11. Ederson Moreira dos Santos has been partially supported by CNPq \#309291/2012-7 grant and FAPESP \#10/19320-7 grant. Pedro Ubilla has been partially supported Fondecyt grant 1120524.
\appendix

%

%

\end{document}